\newtheorem{theorem}{Theorem}[section]
\newtheorem{corollary}[theorem]{Corollary}
\newtheorem{lemma}[theorem]{Lemma}
\newtheorem*{main}{Main~Theorem}
\theoremstyle{definition}
\newtheorem{definition}[theorem]{Definition}
\newtheorem{remark}[theorem]{Remark}
\newcommand{\image}{\mathbin{\hbox{\tt\char'42}}}
\newcommand{\Erdos}{Erd\H os}
\newcommand{\Los}{\L o\'s}
\newcommand{\Mbar}{{\overline{M}}}
\newcommand{\Nbar}{{\overline{N}}}
\newcommand{\Ult}{\text{Ult}}
\newcommand{\Union}{\bigcup}
\newcommand{\of}{\subseteq}
\newcommand{\lt}{\smalllt}
\newcommand{\lesseq}[1]{{\smallleq}#1}
\newcommand{\smallleq}{\mathrel{\mathchoice{\raise2pt\hbox{$\scriptstyle\leq$}}{\raise1pt\hbox{$\scriptstyle\leq$}}{\raise1pt\hbox{$\scriptscriptstyle\leq$}}{\scriptscriptstyle\leq}}}
\newcommand{\smalllt}{\mathrel{\mathchoice{\raise2pt\hbox{$\scriptstyle<$}}{\raise1pt\hbox{$\scriptstyle<$}}{\raise0pt\hbox{$\scriptscriptstyle<$}}{\scriptscriptstyle<}}}
\newcommand{\smallgt}{\mathrel{\mathchoice{\raise2pt\hbox{$\scriptstyle>$}}{\raise1pt\hbox{$\scriptstyle>$}}{\raise0pt\hbox{$\scriptscriptstyle>$}}{\scriptscriptstyle>}}}
\newcommand{\smallgeq}{\mathrel{\mathchoice{\raise2pt\hbox{$\scriptstyle\geq$}}{\raise1pt\hbox{$\scriptstyle\geq$}}{\raise1pt\hbox{$\scriptscriptstyle\geq$}}{\scriptscriptstyle\geq}}}
\newcommand{\Add}{\mathop{\rm Add}}
\newcommand{\ltkappa}{{{\smalllt}\kappa}}
\newcommand{\leqkappa}{{{\smallleq}\kappa}}
\newcommand{\GCH}{{\rm GCH}}
\newcommand{\ORD}{\mathop{{\rm ORD}}}
\newcommand{\ZFC}{{\rm ZFC}}
\newcommand{\Godel}{G\"{o}del}
\newcommand{\Konig}{K\"{o}nig}
\newcommand{\one}{\mathop{1\hskip-2.5pt {\rm l}}}
\newcommand{\p}{\mathbb{P}}
\newcommand{\q}{\mathbb{Q}}
\newcommand{\e}{\mathbb{E}}
\newcommand{\rr}{\mathbb{R}}
\newcommand{\la}{\langle}
\newcommand{\ra}{\rangle}
\newcommand{\her}[1]{H_{{#1}^+}}
\newcommand{\tail}{\text{tail}}
\newcommand{\forces}{\Vdash}
\newcommand{\restrict}{\upharpoonright}
\numberwithin{equation}{section}
\renewcommand{\P}{\mathbb{P}}
\newcommand{\Q}{\mathbb{Q}}
\newcommand{\E}{\mathbb{E}}
\newcommand{\REG}{\rm REG}
\newcommand{\cf}{\mathop{\rm cf}}
\newcommand{\dom}{\mathop{\rm dom}}
\renewcommand{\tilde}{\widetilde}
\begin{document}

\author[Brent Cody]{Brent Cody}
\address[Brent Cody]{
The Fields Institute for Research in Mathematical Sciences,
222 College Street,
Toronto, Ontario M5S 2N2,
Canada}
\email[B. ~Cody]{bcody@fields.utoronto.ca}
\urladdr{http://www.fields.utoronto.ca/~bcody/}

\author[Victoria Gitman]{Victoria Gitman}
\address[Victoria Gitman]{New York City College of Technology (CUNY), 300 Jay Street,
Brooklyn, NY 11201 USA} \email{vgitman@nylogic.org}

\title[Easton's thm. for Ramsey and strongly Ramsey cardinals.]{Easton's theorem for Ramsey and strongly Ramsey cardinals}
\today

\begin{abstract}
We show that, assuming \GCH, if $\kappa$ is a Ramsey or a strongly Ramsey cardinal and $F$ is a class function on the regular cardinals having a closure point at $\kappa$ and obeying the constraints of Easton's theorem, namely, $F(\alpha)\leq F(\beta)$ for $\alpha\leq\beta$ and $\alpha<\cf(F(\alpha))$, then there is a cofinality preserving forcing extension in which $\kappa$ remains Ramsey or strongly Ramsey respectively and $2^\delta=F(\delta)$ for every regular cardinal $\delta$.


\end{abstract}

\maketitle

\section{Introduction}
Since the earliest days of set theory, when Cantor put forth the Continuum Hypothesis in 1877, set theorists have been trying to understand the properties of the continuum function dictating the sizes of powersets of cardinals. In 1904, \Konig\ presented his false proof that the continuum is not an aleph, from which Zermelo derived the primary constraint on the continuum function, the Zermelo-\Konig\ inequality, that $\alpha<\cf(2^\alpha)$ for any cardinal $\alpha$. In the following years, Jourdain and Housedorff introduced the Generalized Continuum Hypothesis, and in another two decades \Godel\ showed the consistency of \GCH\ by demonstrating that it held in his constructible universe $L$.\footnote{For a full account of the early history of the \GCH\ see~\cite{moore:gchhistory}.} The full resolution to the question of {\rm CH} in $\ZFC$ had to wait for Cohen's development of forcing in 1963, which could be used to construct set-theoretic universes with arbitrarily large sizes of the continuum. \Godel's and Cohen's results together finally established the independence of {\rm CH} from $\ZFC$. A decade later, building on advances in forcing techniques, Easton showed that, assuming \GCH, any class function $F$ on the regular cardinals satisfying $F(\alpha)\leq F(\beta)$ for $\alpha\leq\beta$ and $\alpha<\cf(F(\alpha))$ can be realized as the continuum function in a cofinality preserving forcing extension~\cite{easton:gch}, so that in the extension $2^\delta=F(\delta)$ for all regular cardinals $\delta$. Thus, any desired monotonous function on the regular cardinals satisfying the necessary constraints of the Zermelo-\Konig\ inequality could be realized as the continuum function in some set-theoretic universe.\footnote{The situation with singular cardinals turned out to be much more complicated. Silver showed, for example, that if $\delta$ is a singular cardinal of an uncountable cofinality and $2^\alpha=\alpha^+$ for all $\alpha<\delta$, then $2^\delta=\delta^+$ and thus, there is not the same extent of freedom for the continuum function on singular cardinals~\cite{silver:singular}.}

For some simple and other more subtle reasons, the presence of large cardinals in a set-theoretic universe imposes additional constraints on the continuum function, the most obvious of these being that the continuum function must have a closure point at any inaccessible cardinal. Other restrictions arise from large cardinals with strong reflecting properties. For instance, \GCH\ cannot fail for the first time at a measurable cardinal, although Levinski showed in~\cite{levinski:gch} that \GCH\ can hold for the first time at a measurable cardinal. Supercompact cardinals impose much stronger constraints on the continuum function. If $\kappa$ is supercompact and \GCH\ holds below $\kappa$, then it must hold everywhere and, in contrast to Levinski's result, if \GCH\ fails for all regular cardinals below $\kappa$, then it must fail for some regular cardinal $\geq\kappa$~\cite{jech:settheory}.\footnote{Interestingly, in the absence of the axiom of choice, the existence of measurable or supercompact cardinals does not impose any of these restrictions on the continuum function~\cite{apter:choicelesssupercompact}.}  Additionally, certain continuum patterns at a large cardinal can carry increased consistency strength as, for instance, a measurable cardinal $\kappa$ at which  \GCH\ fails has the consistency strength of a measurable cardinal of Mitchell order $o(\kappa)=\kappa^{++}$~\cite{gitik:measurablenotCH}. Some global results are also known concerning sufficient restrictions on the continuum function in universes with large cardinals. Menas showed in~\cite{menas:indes} that, assuming \GCH, there is a cofinality preserving and supercompact cardinal preserving forcing extension realizing any \emph{locally definable}\footnote{A function $F$ is \emph{locally definable} if there is a true sentence $\psi$ and a formula $\varphi(x,y)$ such that for all cardinals $\gamma$, if $H_\gamma\models \psi$, then $F$ has a closure point at $\gamma$ and for all $\alpha,\beta<\gamma$, we have $F(\alpha)=\beta\leftrightarrow H_\gamma\models \varphi(\alpha,\beta)$.} function on the regular cardinals obeying the constraints of Easton's theorem, and Friedman and Honzik extended this result to strong cardinals using generalized Sacks forcing~\cite{syfriedman:continuum}.  In \cite{cody:dissertation}, Cody showed that if $\GCH$ holds, and if $F$ is any function obeying the constraints of Easton's theorem ($F$ need not be locally definable) such that each Woodin cardinal is closed under $F$, then there is a cofinality preserving forcing extension realizing $F$ to which all Woodin cardinals are preserved.

In this article, we show that, assuming \GCH, if $\kappa$ is a Ramsey or a strongly Ramsey cardinal, then any class function on the regular cardinals having a closure point at $\kappa$ and obeying Easton's constraints is realized as the continuum function in a cofinality preserving forcing extension in which $\kappa$ remains Ramsey or strongly Ramsey respectively. In particular, this extends Levinski's result mentioned earlier to Ramsey and strongly Ramsey cardinals. Strongly Ramsey cardinals, introduced by Gitman in~\cite{gitman:ramsey}, fall in between Ramsey cardinals and measurable cardinals in consistency strength, and we shall review their properties in Section~\S\ref{sec:ramsey}.
\begin{main}
Assuming \GCH, if $\kappa$ is a Ramsey or a strongly Ramsey cardinal and $F$ is a class function on the regular cardinals having a closure point at $\kappa$ and satisfying $F(\alpha)\leq F(\beta)$ for $\alpha\leq\beta$ and $\alpha<\emph{cf}(F(\alpha))$, then there is a cofinality preserving forcing extension in which $\kappa$ remains Ramsey or strongly Ramsey respectively, and $F$ is realized as the continuum function, namely $2^\delta=F(\delta)$ for every regular cardinal $\delta$.
\end{main}
\noindent The main theorem will be established by proving Theorems~\ref{th:stronglyramsey} and \ref{th:ramsey} in Section~\S\ref{se:proofofmaintheorem}.

As a corollary to the proof of the main theorem, we have the following.

\begin{corollary}
Suppose $A$ is a class of Ramsey (or strongly Ramsey) cardinals, and $F$ is a function as in the above theorem such that each $\kappa\in A$ is closed under $F$. Then, assuming $\GCH$, there is a cofinality-preserving forcing extension in which each $\kappa\in A$ remains Ramsey (or strongly Ramsey) and $2^\gamma=F(\gamma)$ for each regular cardinal $\gamma$.
\end{corollary}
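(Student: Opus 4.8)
The plan is to run the proof of Theorems~\ref{th:stronglyramsey} and~\ref{th:ramsey} essentially unchanged, with the class $A$ in place of a single cardinal, and to observe that the preservation argument used there for one Ramsey (resp.\ strongly Ramsey) cardinal is genuinely local, so that it applies to all the members of $A$ simultaneously. Concretely, I would take $\mathbb{P}$ to be the same class-length Easton forcing — the Easton-support product (or iteration) of the posets $\Add(\gamma,F(\gamma))$ over the regular cardinals $\gamma$ — that realizes $F$ in the main theorem. Assuming $\GCH$, $\mathbb{P}$ is cofinality preserving and forces $2^\gamma=F(\gamma)$ for every regular $\gamma$ by Easton's theorem, and this conclusion is entirely insensitive to $A$. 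So the whole content of the corollary is the assertion that every $\kappa\in A$ remains Ramsey, resp.\ strongly Ramsey, in $V^{\mathbb{P}}$.

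To see this, fix $\kappa\in A$. Since $\kappa$ is (strongly) Ramsey it is inaccessible, and since $\kappa$ is closed under $F$ it is a point at which $\mathbb{P}$ factors, below and above $\kappa$, as $\mathbb{P}_{<\kappa}$ followed by $\mathbb{P}_{\ge\kappa}$, where $\mathbb{P}_{<\kappa}$ has size $\le\kappa$ and is $\kappa$-cc, $\mathbb{P}_{\ge\kappa}$ is (forced) ${<}\kappa$-closed, and $\mathbb{P}_{\ge\kappa}$ in turn factors as $\Add(\kappa,F(\kappa))$ times a tail that is even ${\le}\kappa$-closed. These are exactly the structural features of the factorization at a closure point of $F$ that drive the proof of the main theorem: that proof first argues that $\mathbb{P}_{<\kappa}$ preserves the (strong) Ramseyness of $\kappa$, and then, working in $V[G_{<\kappa}]$, lifts embeddings $j\colon M\to N$ witnessing the relevant property of $\kappa$ — with $M$ a weak $\kappa$-model, resp.\ a $\kappa$-model, containing (a name for) $\mathbb{P}_{\ge\kappa}$ and the requisite parameters — through $\mathbb{P}_{\ge\kappa}$, by building an $N$-generic filter for $j(\mathbb{P}_{\ge\kappa})$ out of the closure of $\mathbb{P}_{\ge\kappa}$ and of $M$ (resp.\ $N$) together with $\GCH$-based counting. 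None of these steps refers to anything beyond $\kappa$, the two factors at $\kappa$, and cardinal arithmetic, so the argument goes through verbatim at every $\kappa\in A$ at once, and the resulting extension $V^{\mathbb{P}}$ preserves all of them.

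The step that I expect to need the most care — and the only genuinely new point relative to the main theorem — is confirming that the extra forcing $\mathbb{P}$ carries at the \emph{other} members of $A$ does not spoil the preservation at $\kappa$: when $A$ is unbounded, $\mathbb{P}_{\ge\kappa}$ forces nontrivially at the members of $A$ above $\kappa$, and $\mathbb{P}_{<\kappa}$ at those below. I would dispatch this by noting that $\mathbb{P}_{<\kappa}$ keeps its size and chain condition relative to $\kappa$ regardless of what it does internally, so the preparatory step is unaffected; and that in the lifting step $\mathbb{P}_{\ge\kappa}$, as well as its image $j(\mathbb{P}_{\ge\kappa})$ computed in $N$, enters the construction only through its degree of closure and the shape of its first factor $\Add(\kappa,F(\kappa))$, so the (strongly) Ramsey cardinals lying higher up the product are invisible to it. The real work, then, is organizational: presenting the proof of the main theorem so that its preservation argument at $\kappa$ is manifestly local — depending on $V$ only through $\GCH$ and the factorization of $\mathbb{P}$ at $\kappa$ — after which the corollary is immediate.
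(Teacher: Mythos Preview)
Your proposal is correct and matches the paper's approach: the paper states the corollary immediately after the main theorem with no separate proof, simply noting it is ``a corollary to the proof of the main theorem,'' and your elaboration---that the forcing $\mathbb{P}^F$ depends only on $F$, that it factors at every closure point $\kappa$ of $F$ exactly as in Theorems~\ref{th:stronglyramsey} and~\ref{th:ramsey}, and that the lifting argument at $\kappa$ uses nothing about $A$---is precisely the intended content. One minor inaccuracy: the paper does not first show that $\mathbb{P}_{<\kappa}$ preserves (strong) Ramseyness and then lift in $V[G_{<\kappa}]$; rather it lifts ground-model embeddings directly through $\mathbb{P}^F_\kappa*\dot\Add(\kappa,F(\kappa))$ and handles the $\leqkappa$-distributive tail separately---but this does not affect the locality observation on which the corollary rests.
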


\section{Ramsey and strongly Ramsey cardinals}\label{sec:ramsey}
Ramsey cardinals $\kappa$ were by introduced by \Erdos\ and Hajnal in~\cite{erdos:ramsey} as having the property that every coloring $f:[\kappa]^{\lt\omega}\to 2$ has a homogeneous set of size $\kappa$. A little explored elementary embeddings characterization of Ramsey cardinals appeared two decades later in~\cite{mitchell:ramsey} and~\cite{dodd:coremodel}. In~\cite{gitman:ramsey}, Gitman generalized aspects of the Ramsey elementary embeddings to introduce several new large cardinal notions, including the strongly Ramsey cardinals. While measurable cardinals and most stronger large cardinal notions are associated with elementary embeddings of the entire universe $V$ into a transitive inner model, Ramsey cardinals and several smaller large cardinals, including weakly compact, unfoldable, and indescribable cardinals, are characterized by the existence of elementary embeddings of transitive set-sized models of set theory without powerset. The theory $\ZFC$ without powerset, denoted $\ZFC^-$, consists of the usual axioms of $\ZFC$ excluding powerset, with the replacement scheme replaced by the collection scheme and the axiom of choice replaced by the well-ordering principle.\footnote{Zarach showed that without powerset, replacement is not equivalent to collection~\cite{Zarach1996:ReplacmentDoesNotImplyCollection} and the axiom of choice does imply the well-ordering principle~\cite{zarach:unions_of_zfminus_models}, while~\cite{zfcminus:gitmanhamkinsjohnstone} showed that collection is required for many of the widely-used set theoretic results such as the \Los\ theorem.} A $\ZFC^-$ model is
called a \emph{weak $\kappa$-model} if it is a transitive set of size $\kappa$
having $\kappa$ as an element; if the model is additionally closed under sequences of length less than $\kappa$ it is called a \emph{$\kappa$-model}. The sets $H_{\theta^+}$, consisting of all sets of hereditary size at most $\theta$, are natural models of $\ZFC^-$, and many weak $\kappa$-models and $\kappa$-models of interest arise as elementary substructures of these. The large cardinal notion with the
simplest characterization involving embeddings between weak
$\kappa$-models is that of weak compactness. Weakly compact cardinals can be defined in a multitude of different ways, among them, the coloring property: $\kappa$ is weakly compact if every coloring $f:[\kappa]^2\to 2$ has a homogeneous set of size $\kappa$, or the tree property: $\kappa$ is inaccessible and every $\kappa$-tree has a cofinal branch~\cite{jech:settheory}. The elementary embeddings property of weakly compact cardinals is that $\kappa$ is weakly compact if $2^{\ltkappa}=\kappa$ and every $A\of\kappa$ is contained in a weak $\kappa$-model $M$ for which there exists an elementary embedding $j:M\to N$ with critical point $\kappa$. In fact, it is equivalent that there must be such an elementary embedding for every $\kappa$-model and, indeed, for every transitive $\ZFC^-$-model of size $\kappa$.\footnote{The strongest of the characterizations follows by using the tree property to obtain a countably complete ultrafilter for the subsets of $\kappa$ in $M$, and the weakest suffices to imply inaccessibility together with the tree property.}

If $j:V\to M$ is any elementary embedding of the universe with critical point some cardinal $\kappa$, then $U=\{A\subseteq\kappa\mid \kappa\in j(A)\}$ is a normal ultrafilter on $\kappa$. The ultrapower of the universe by $U$ is well-founded since $U$ is countably complete and can hence be collapsed to a transitive inner model. Indeed, the ultrapower construction with $U$ may be iterated to produce an $\ORD$-length directed system of  elementary embeddings of transitive inner models. The successor stages of the iteration are ultrapowers by the image of the ultrafilter from the previous stage and direct limits are taken at limit stages. Gaifman showed in~\cite{gaifman:ultrapowers} that if the ultrafilter is countably complete, then all the \emph{iterated ultrapowers} are well-founded. Thus, countably complete ultrafilters are \emph{iterable} in the sense that they produce all well-founded iterated ultrapowers. If $j:M\to N$ is an elementary embedding of a weak $\kappa$-model with critical point $\kappa$, then $U=\{A\subseteq\kappa\mid\kappa\in j(A)\}$ is called an $M$-ultrafilter and is a normal ultrafilter from the perspective of $M$. More precisely, if $\kappa$ is a cardinal in a model $M$\footnote{Unless we specifically state otherwise, all models we work with should be assumed to be transitive.}  of $\ZFC^-$, then $U\of P(\kappa)\cap M$ is an $M$-\emph{ultrafilter}  if the structure $\la M,\in, U\ra$, consisting of $M$ together with a predicate for $U$, satisfies that $U$ is a normal ultrafilter\footnote{Here, we adopt the convention that an ultrafilter on a cardinal $\kappa$ includes the tail sets. It follows that ultrafilters are necessarily non-principal and a normal ultrafilter on $\kappa$ is $\kappa$-complete.} on $\kappa$. Using an $M$-ultrafilter, we can construct the ultrapower of $M$ out of functions on $\kappa$ that are elements of $M$ and this ultrapower satisfies \Los. If the $M$-ultrafilter arose from an elementary embedding $j:M\to N$, as above, then the ultrapower will be well-founded as it embeds into $N$. Alternatively, the ultrapower by an $M$-ultrafilter will be well-founded if it is $\omega_1$-\emph{intersecting}, where every countable collection of sets in the ultrafilter has a non-empty intersection.\footnote{Such ultrafilters are commonly referred to in the literature as \emph{countably complete}, but we find this terminology confusing since in the case of $M$-ultrafilters the intersection is not required to be an element of the ultrafilter but only needs to be non-empty.}  Otherwise, since the $M$-ultrafilter is only countably complete for sequences in $M$ and $M$ might be missing many countable sequences, there is no reason to suppose the ultrapower to be well-founded. Even if the $M$-ultrafilter produces a well-founded ultrapower, iterating the construction requires extra assumptions, since if it is external to $M$, the ultrafilters for the successor stages of the iteration cannot be obtained by taking the image of the ultrafilter in the preceding stage. The construction to define successor stage ultrafilters requires the $M$-ultrafilter to be \emph{weakly amenable} to $M$, that is the intersection of the ultrafilter with any set of size $\kappa$ in $M$ must be an element of $M$. The weak amenability of an $M$-ultrafilter $U$ on $\kappa$ allows us to define a weakly amenable $\Ult(M,U)$-ultrafilter $W=\{[f]_U\mid \{\xi<\kappa\mid f(\xi)\in U\}\in U\}$.\footnote{See Section~19 of~\cite{kanamori:higher} for details.} An elementary embedding $j:M\to N$ with critical point $\kappa$ and the additional property that $M$ and $N$ have the same subsets of $\kappa$ may be used to obtain a weakly amenable $M$-ultrafilter with a well-founded ultrapower. Gitman named such embeddings $\kappa$-\emph{powerset preserving}, and their existence is actually equivalent to the existence of weakly amenable $M$-ultrafilters with well-founded ultrapowers, as the ultrapower map by a weakly amenable $M$-ultrafilter is $\kappa$-powerset preserving. Weak amenability allows for the iterated ultrapower construction to proceed but it does not guarantee well-foundedness of the iterated ultrapowers. Weakly amenable $M$-ultrafilters with well-founded ultrapowers span the full spectrum of iterability behavior from having only the well-founded ultrapower to having exactly $\alpha$-many well-founded iterated ultrapowers for any $\alpha<\omega_1$ to being fully iterable~\cite{gitman:welch}.\footnote{An $M$-ultrafilter that produces $\omega_1$-many iterated ultrapowers is already fully iterable by a theorem of Gaifman from~\cite{gaifman:ultrapowers}.} Kunen showed in~\cite{kunen:ultrapowers} that being $\omega_1$-intersecting is a sufficient condition for a weakly amenable $M$-ultrafilter to be fully iterable, which leads us into the elementary embeddings characterization of Ramsey cardinals.
\begin{theorem}[\cite{mitchell:ramsey, dodd:coremodel}]\label{th:ramseyult}
A cardinal $\kappa$ is Ramsey if and only if every $A\of\kappa$ is contained in a weak $\kappa$-model $M$ for which there exists a weakly amenable $\omega_1$-intersecting $M$-ultrafilter on $\kappa$.
\end{theorem}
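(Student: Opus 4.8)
The plan is to prove both implications; for the "if" direction it suffices, by the standard reduction, to produce a homogeneous set of size $\kappa$ for an arbitrary coloring $f\colon[\kappa]^{<\omega}\to 2$.

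\emph{$(\Rightarrow)$} Fix $A\of\kappa$. A Ramsey cardinal is inaccessible, so choose $\eta\in(\kappa,\kappa^+)$ with $L_\eta[A]\models\ZFC^-$ and let $\str=\langle L_\eta[A],\in,A\rangle$ with its definable Skolem functions. By the standard consequence of $\kappa\to(\kappa)^{<\omega}_2$ that a structure on $\kappa$ in a countable language carries a set of \emph{good} indiscernibles of size $\kappa$, fix $I\of\kappa$ of order type $\kappa$ such that for every formula $\varphi$, every $\gamma\in I$, every tuple $\bar p$ of parameters of rank $\le\gamma$, and all increasing tuples $\bar a,\bar b$ from $I\setminus(\gamma+1)$ of equal length, $\str\models\varphi(\bar p,\bar a)\leftrightarrow\varphi(\bar p,\bar b)$. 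Let $M$ be the transitive collapse of the Skolem hull of $I\cup\{A,\kappa\}$ in $\str$; as there are only countably many Skolem terms and $|I|=\kappa$, this is a weak $\kappa$-model with $A,\kappa\in M$ in which every subset of $\kappa$ is definable from $A$ and finitely many of the indiscernibles. Put $U=\{X\in\Power(\kappa)\cap M: X\supseteq I\setminus\gamma\text{ for some }\gamma<\kappa\}$. Good indiscernibility now delivers what is needed: applying indiscernibility to the formula ``$\delta\in X$'' shows that for $X\in\Power(\kappa)\cap M$ either $X$ or $\kappa\setminus X$ contains a final segment of $I$, so $U$ is an ultrafilter from the point of view of $M$; it contains the tail sets since $I$ has order type $\kappa$; and regressive functions of $M$ are eventually constant along $I$, so $\langle M,\in,U\rangle$ regards $U$ as a normal, hence $\kappa$-complete, ultrafilter. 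Weak amenability is the place where the uniformity of good indiscernibles is essential: for $Y\in M$ of $M$-cardinality $\kappa$, enumerated $\langle X_\xi:\xi<\kappa\rangle\in M$, each $X_\xi$ is decided by a final segment of $I$ past a threshold that depends on $\xi$ definably over $M$, so $U\cap Y=\{X_\xi:X_\xi\in U\}\in M$. Finally $U$ is $\omega_1$-intersecting for free: a countable $\{X_n:n<\omega\}\of U$ has $X_n\supseteq I\setminus\gamma_n$ with $\gamma_n<\kappa$, and $\gamma_\infty:=\sup_n\gamma_n<\kappa$ as $\kappa$ is regular and uncountable, so $\bigcap_nX_n\supseteq I\setminus\gamma_\infty\ne\emptyset$. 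Thus $M,U$ witness the characterization for $A$.

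\emph{$(\Leftarrow)$} The embeddings property weakens the embeddings characterization of weak compactness, so $\kappa$ is inaccessible, in particular regular. Given $f\colon[\kappa]^{<\omega}\to 2$, code it into some $A\of\kappa$ and fix a weak $\kappa$-model $M$ with $f\in M$ carrying a weakly amenable $\omega_1$-intersecting $M$-ultrafilter $U$ on $\kappa$. For each fixed $n<\omega$, run inside $\langle M,\in,U\rangle$ the usual construction (the one showing that a measurable cardinal is Ramsey) of a homogeneous set for $f\restriction[\kappa]^n$: this is a \emph{finite} recursion whose finitely many auxiliary families of subsets of $\kappa$ each have size $\kappa$ and lie in $M$ (being built from $f$ and earlier-produced objects), so weak amenability puts the restriction of $U$ to each such family into $M$ and keeps the next auxiliary function in $M$, while the diagonal intersections involved are indexed by sequences in $M$ and hence controlled by the normality of $U$ in $\langle M,\in,U\rangle$. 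The outcome is a set $H_n\in U$ — so $H_n\in M$ with $M\models|H_n|=\kappa$, hence $H_n$ has order type $\kappa$ — homogeneous for $f\restriction[\kappa]^n$ with a color $c_n$. Since each $H_n$ is in $U$ and $U$ contains the tail sets, every $\bigcap_{n<\omega}(H_n\setminus\beta)$ for $\beta<\kappa$ is a countable intersection of members of $U$ and so nonempty; thus $\bigcap_{n<\omega}H_n$ is unbounded in $\kappa$, and using the regularity of $\kappa$ we may recursively pick a strictly increasing $\langle\alpha_\xi:\xi<\kappa\rangle$ with all $\alpha_\xi\in\bigcap_{n<\omega}H_n$. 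Then $H=\{\alpha_\xi:\xi<\kappa\}$ has size $\kappa$ and, since $[H]^n\of[H_n]^n$ for each $n$, $f$ is constant on $[H]^n$ with value $c_n$; so $H$ is homogeneous for $f$ and $\kappa$ is Ramsey.

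The main obstacle is in $(\Rightarrow)$: obtaining indiscernibles that are \emph{good}, and converting that uniformity into the weak amenability of the tail filter. Plain indiscernibility is not enough here, and the bookkeeping tying the rank of a defining parameter to a threshold in $I$ is the delicate point — though it is routine in the theory of indiscernibles for $L[A]$-like structures. In $(\Leftarrow)$ the content is the division of labor between the two hypotheses on $U$: weak amenability keeps the finitely many Rowbottom-style auxiliaries for each $f\restriction[\kappa]^n$ inside $M$, whereas $\omega_1$-intersecting is precisely what lets the length-$\kappa$ selection of the $\alpha_\xi$ succeed; without it one could not guarantee that $\bigcap_nH_n$ meets every tail of $\kappa$, in keeping with the fact that weakly amenable $M$-ultrafilters with merely well-founded ultrapowers witness a large-cardinal property strictly below Ramseyness.
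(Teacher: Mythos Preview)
The paper does not prove this result --- it is stated with a citation to Mitchell and Dodd --- so there is no paper proof to compare against. Your outline is the classical one from those references: good indiscernibles for $(\Rightarrow)$, and for $(\Leftarrow)$ a Rowbottom-style extraction of $H_n\in U$ homogeneous for $f\restriction[\kappa]^n$ followed by an appeal to $\omega_1$-intersecting to thread a single $H\of\bigcap_n H_n$ of size $\kappa$. The backward direction is correct as written; the division of labor you describe between weak amenability (keeping the finitely many auxiliary families at each arity inside $M$ so that normality in $\langle M,\in,U\rangle$ applies) and $\omega_1$-intersecting (forcing $\bigcap_n H_n$ to meet every tail of $\kappa$) is exactly right.

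In $(\Rightarrow)$ there is a technical slip. After collapsing the Skolem hull of $I\cup\{A,\kappa\}$ you assert $A\in M$, but what you actually get is $\pi(A)\in M$; for $\pi(A)=A$ you would need the hull to contain $\kappa$ as a subset, and good indiscernibility does not give this --- allowing ordinals below an indiscernible as \emph{parameters} in the indiscernibility scheme is not the same as placing them in the hull. Your weak-amenability argument is also too compressed to stand alone: the ``threshold that depends on $\xi$ definably over $M$'' is the right intuition, but making it precise uses that every element of $M$ is a Skolem term in finitely many indiscernibles, so that membership in $U$ of any $X\in\Power(\kappa)^M$ is decided by one further indiscernible above those finitely many. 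That is exactly why one wants the hull to be generated by $I$ alone (so the collapse is essential for weak amenability) while at the same time arranging that the given $A$ survives the collapse --- two requirements in tension. The cited sources resolve this by a more careful choice of the structure and a strengthening of the indiscernibility (so that the hull of an initial segment of $I$ already contains all smaller ordinals); your sketch identifies the right pressure points but does not discharge them.
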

Recall that while weakly compact cardinals are characterized by the existence of elementary embeddings on weak $\kappa$-models, it is equivalent to strengthen the characterization to the existence of embeddings on $\kappa$-models and indeed on all transitive $\ZFC^-$-models of size $\kappa$. This is not the case with Ramsey cardinals. Once the embeddings are required to be $\kappa$-powerset preserving, the reflection properties this introduces causes the strengthening to $\kappa$-models to produce a jump in consistency strength. In~\cite{gitman:ramsey}, Gitman introduced the notion of strongly Ramsey cardinals, using precisely this strengthened characterization, and showed that they are limits of Ramsey cardinals, but still weaker than measurable cardinals which are limits of them. She also showed there that requiring the existence of a $\kappa$-powerset preserving embedding for every transitive $\ZFC^-$-model of size $\kappa$ is inconsistent.
\begin{definition}
A cardinal $\kappa$ is \emph{strongly Ramsey} if every $A\of\kappa$ is contained in a $\kappa$-model $M$ for which there exists a weakly amenable $M$-ultrafilter on $\kappa$.
\end{definition}
\noindent Note that if $M$ is a $\kappa$-model, then any $M$-ultrafilter is automatically $\omega_1$-intersecting since $M$ is closed under countable sequences.
The elementary embeddings characterization of strongly Ramsey cardinals can be reformulated without $M$-ultrafilters, using the equivalence of the existence of weakly amenable ultrafilters on $\kappa$ and $\kappa$-powerset preserving embeddings mentioned earlier.
\begin{remark}
A cardinal $\kappa$ is strongly Ramsey if and only if every $A\of\kappa$ is contained in a $\kappa$-model for which there exists a $\kappa$-powerset preserving elementary embedding.
\end{remark}

In the case of both Ramsey and strongly Ramsey cardinals, we will show how to strengthen their respective characterizations by requiring the existence of  elementary embeddings for models of full \ZFC, additionaly having the properties described in Definition~\ref{def:special} below.
\begin{definition}\label{def:special}
A weak $\kappa$-model $M$ is $\alpha$-\emph{special} if it is the union of a continuous elementary chain of (not necessarily transitive) submodels $\la X_\xi\mid \xi<\alpha\ra$ such that each $X_\xi$ is a set of size $\kappa$ in $M$  and each $X_{\xi+1}^{\ltkappa}\subseteq X_{\xi+1}$ in $M$.
\end{definition}

Before we proceed, let us recall two useful standard properties of ultrapower maps.
\begin{remark}\label{rem:ultrapowers}
Suppose $M$ is a weak $\kappa$-model.
\begin{itemize}
\item[(1)] An elementary embedding $j:M\to N$ is the ultrapower map
by an $M$-ultrafilter on $\kappa$ if and
only if $N=\{j(f)(\kappa)\mid f:\kappa\to M,f\in M\}$.
\item[(2)] If $j:M\to N$ is the ultrapower map by an $M$-ultrafilter on
$\kappa$ and $M^{\alpha}\of M$ for some $\alpha<\kappa$, then
$N^{\alpha}\of N$. In particular, if $M$ is a $\kappa$-model, then $N$ is a $\kappa$-model as well.
\end{itemize}
\end{remark}
\begin{lemma}\label{le:specialult}
If $M$ is an $\alpha$-special weak $\kappa$-model and $N$ is the ultrapower of $M$ by a weakly amenable $M$-ultrafilter on $\kappa$, then $N$ is $\alpha$-special as well.
\end{lemma}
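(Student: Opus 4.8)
The plan is to push the witnessing chain of $M$ forward through the ultrapower map and cut it down to the correct size. Fix a continuous elementary chain $\langle X_\xi\mid\xi<\alpha\rangle$ witnessing that $M$ is $\alpha$-special, let $U$ be the given weakly amenable $M$-ultrafilter, and let $j\colon M\to N$ be the ultrapower map, so that $N=\{j(f)(\kappa)\mid f\colon\kappa\to M,\ f\in M\}$ by Remark~\ref{rem:ultrapowers}. First I would arrange, without loss of generality, that $\kappa\in X_\xi$ for every $\xi<\alpha$, by replacing each $X_\xi$ with the Skolem hull in $M$ of $X_\xi\cup\{\kappa\}$; this preserves continuity, elementarity, and the size-$\kappa$ requirement, and it preserves the clause $X_{\xi+1}^{\ltkappa}\subseteq X_{\xi+1}$ in $M$ because a $\ltkappa$-sequence of Skolem-term evaluations splits into a $\ltkappa$-sequence of (codes for) terms, which lies in $X_{\xi+1}$ since $\omega\subseteq X_{\xi+1}$, together with a $\ltkappa$-sequence of parameter tuples from the old $X_{\xi+1}$, which lies in $X_{\xi+1}$ by its $\ltkappa$-closure in $M$. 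Then I set
\[
Y_\xi=\{\,j(f)(\kappa)\mid f\in X_\xi,\ \dom(f)=\kappa\,\},
\]
the set of elements of $N$ represented by a function lying in $X_\xi$, and I claim that $\langle Y_\xi\mid\xi<\alpha\rangle$ witnesses that $N$ is $\alpha$-special.

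Most of the verification is bookkeeping. The chain is increasing because $\langle X_\xi\rangle$ is, and it is continuous because for limit $\lambda$ the condition ``$f\in X_\lambda$'' reduces to ``$f\in X_\xi$ for some $\xi<\lambda$''. Its union is $N$: every $f\colon\kappa\to M$ in $M$ lies in some $X_\xi$, so $j(f)(\kappa)\in Y_\xi$. Each $Y_\xi$ is an element of $N$ of size $\kappa$ there: a bijection $e_\xi\colon\kappa\to X_\xi$ in $M$ pushes forward to $j(e_\xi)\in N$, and $Y_\xi$ is the range of the $N$-definable function $\beta\mapsto j(e_\xi)(\beta)(\kappa)$ on the appropriate subset of $\kappa$, so $Y_\xi\in N$ with $N\models|Y_\xi|\leq\kappa$; the reverse inequality holds because $j[X_\xi]\subseteq Y_\xi$ (constant functions lie in $X_\xi$ since $\kappa\in X_\xi$) forces $|Y_\xi|=\kappa$ externally, which a transitive $N$ cannot contradict. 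For $Y_\xi\prec N$ I would invoke the Tarski--Vaught test: if $N\models\exists y\,\varphi(y,j(f_1)(\kappa),\dots,j(f_n)(\kappa))$ with $f_1,\dots,f_n\in X_\xi$, then by \Los\ the set of $\zeta<\kappa$ with $M\models\exists y\,\varphi(y,f_1(\zeta),\dots,f_n(\zeta))$ lies in $U$, and since $M\models\ZFC^-$ (collection together with the well-ordering principle) there is in $M$ a function $g$ with $\dom(g)=\kappa$ choosing such a witness at each such $\zeta$; as $g$ may be taken definably from $f_1,\dots,f_n$ and $\kappa$ inside $X_\xi\prec M$, we have $g\in X_\xi$, and $j(g)(\kappa)\in Y_\xi$ is the required witness.

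The crux of the proof -- and where I expect the main obstacle -- is that each successor stage $Y_{\xi+1}$ is $\ltkappa$-closed in $N$. Given $\gamma<\kappa$ and $s\in N$ with $s\colon\gamma\to Y_{\xi+1}$, write $s=[F]_U$ with $F\colon\kappa\to M$ in $M$. For each $i<\gamma$, since $s(i)\in Y_{\xi+1}$ there is some $g^i\in X_{\xi+1}$ with $\dom(g^i)=\kappa$ and $[g^i]_U=s(i)$, equivalently $A_i:=\{\zeta<\kappa\mid g^i(\zeta)=F(\zeta)(i)\}\in U$. If the sequence $\langle g^i\mid i<\gamma\rangle$ can be found inside $X_{\xi+1}$, then the function $G\colon\zeta\mapsto\langle g^i(\zeta)\mid i<\gamma\rangle$ lies in $X_{\xi+1}$ and $[G]_U=s$ by \Los, so $s=j(G)(\kappa)\in Y_{\xi+1}$, as needed. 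The difficulty is that choosing the $g^i$ uniformly in $M$ appears to require $M$ to know which of the sets $\{\zeta\mid g(\zeta)=F(\zeta)(i)\}$, for $g$ ranging over the functions in $X_{\xi+1}$, belong to $U$, which is information external to $M$. This is exactly what weak amenability supplies: the family
\[
\mathcal A=\bigl\{\,\{\zeta<\kappa\mid g(\zeta)=F(\zeta)(i)\}\ \bigm|\ i<\gamma,\ g\in X_{\xi+1},\ \dom(g)=\kappa\,\bigr\}
\]
is a set of size at most $\kappa$ in $M$, so $U\cap\mathcal A\in M$, and hence ``$g$ represents $s(i)$'' becomes a relation definable in $M$; the well-ordering principle in $M$ then produces a sequence $\langle g^i\mid i<\gamma\rangle\in M$ with $A_i\in U$ for every $i<\gamma$. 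Since $\gamma<\kappa$ and each $g^i\in X_{\xi+1}$, the hypothesis $X_{\xi+1}^{\ltkappa}\subseteq X_{\xi+1}$ in $M$ gives $\langle g^i\mid i<\gamma\rangle\in X_{\xi+1}$, and therefore $G\in X_{\xi+1}$.

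Assembling these observations, $\langle Y_\xi\mid\xi<\alpha\rangle$ is a continuous elementary chain of submodels of $N$, each of size $\kappa$ in $N$, with union $N$ and with every successor stage $\ltkappa$-closed in $N$, so $N$ is $\alpha$-special. Everything apart from the $\ltkappa$-closure step is transport of structure through the ultrapower together with accounting for the ``size $\kappa$'' clause; the only genuinely nontrivial ingredient is the use of weak amenability to internalize within $M$ the choice of representatives for the entries of a short sequence in $N$.
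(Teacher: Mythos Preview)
Your proposal is correct and follows essentially the same approach as the paper: define $Y_\xi=\{j(f)(\kappa)\mid f\in X_\xi,\ \dom(f)=\kappa\}$, verify the chain properties directly, and handle the crucial $\ltkappa$-closure of $Y_{\xi+1}$ by using weak amenability of $U$ to internalize in $M$ the choice of representatives for the entries of a short sequence in $N$. The only differences are cosmetic: the paper arranges $\kappa\in X_\xi$ by passing to a tail of the chain rather than taking Skolem hulls, and it verifies $Y_\xi\prec N$ by observing that $Y_\xi$ is (isomorphic to) the ultrapower of $X_\xi$ by $U$ rather than via Tarski--Vaught; in the closure step the paper first extracts the coordinate functions $f_\eta(\beta)=F(\beta)_\eta$ and then applies weak amenability to the family of agreement sets, whereas you apply weak amenability directly to $\mathcal A$, but the content is the same.
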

\begin{proof}
Fix an $\alpha$-special weak $\kappa$-model $M$ with the ultrapower map $j:M\to N$ by a weakly amenable $M$-ultrafilter $U$ on $\kappa$, and let $\la X_\xi\mid \xi<\alpha\ra$ be the elementary chain of submodels witnessing that $M$ is $\alpha$-special. Since $M=\Union_{\xi<\alpha}X_\xi$, there is some $\eta$ such that $\kappa\in X_\eta$, and so by taking the tail sequence starting at $\eta$, we may assume without loss of generality that $\kappa$ is in every $X_\xi$.  For $\xi<\alpha$, we define $Y_\xi=\{j(f)(\kappa)\mid f:\kappa\to X_\xi,f\in X_\xi\}$ and argue that the sequence $\la Y_\xi\mid\xi<\alpha\ra$ witnesses that $N$ is $\alpha$-special. Observe that each $j\image X_\xi$ is an element of $N$ of size $\kappa$ since it is the range of $j(f)\restrict\kappa$ for any bijection $f\in M$ between $\kappa$ and $X_\xi$. It follows that each $Y_\xi$ is an element of $N$ of size $\kappa$.  It is clear that the $Y_\xi$ form a continuous increasing chain with union $N$. Since the chain is increasing, to show that it is elementary it suffices to see that the $Y_\xi$ are elementary in $N$. For this, we observe that each $Y_\xi$ is isomorphic to the ultrapower of $X_\xi$ by $U$ that uses functions $f:\kappa\to X_\xi$ from $X_\xi$ (this needs that $\kappa\in X_\xi$), and so we have $Y_\xi\models\varphi(j(f)(\kappa))\leftrightarrow\{\xi<\kappa\mid X_\xi\models \varphi(f(\xi))\}=\{\xi<\kappa\mid M\models \varphi(f(\xi))\}\in U\leftrightarrow N\models\varphi(j(f)(\kappa))$. It remains to argue that the $Y_{\xi+1}$ are closed under $\ltkappa$-sequences in $N$. Towards this end, we fix a $Y_{\gamma+1}$ and let $\vec y=\la y_\eta\mid \eta<\delta\ra$ for some $\delta<\kappa$ be a sequence of elements of $Y_{\gamma+1}$ in $N$. Since each $y_\eta\in Y_{\gamma+1}$, we may fix a function $g_\eta\in X_{\gamma+1}$ such that $j(g_\eta)(\kappa)=y_\eta$. It is tempting at this point to conclude that the sequence $\la g_\eta\mid\eta<\delta\ra$ belongs to $X_{\gamma+1}$ since it is $\ltkappa$-closed in $M$, but for this to be the case, we must first argue that $\la g_\eta\mid\eta<\delta\ra$ is an element of $M$, which there is no obvious reason to believe. Instead, we fix $F:\kappa\to M$ such that $j(F)(\kappa)=\vec y$ and assume by choosing a $U$-equivalent function, if necessary, that for every $\beta<\kappa$, $F(\beta)$ is a $\delta$-sequence. Thus, there is a sequence $\la f_\eta\mid\eta<\delta\ra\in M$ of functions such that $f_\eta(\beta)=F(\beta)_\eta$ for all $\beta<\kappa$ and $\eta<\delta$. By \Los, it follows  that $j(f_\eta)(\kappa)=j(g_\eta)(\kappa)=y_\eta$, but the functions $f_\eta$ need not be elements of $X_{\gamma+1}$. Nevertheless, using $\la f_\eta\mid\eta<\delta\ra$, the model $M$ will be able to construct a sequence of $U$-equivalent functions $h_\eta$ that will be elements of $X_{\gamma+1}$. By the weak amenability of $U$, the set $u=\{A_\eta\mid\eta<\delta\}$, where $A_\eta=\{\xi<\kappa\mid f_\eta(\xi)=g_\eta(\xi)\}$, is an element of $M$. The functions $g_\eta$ witness that $M$ satisfies that for each $\eta<\delta$, there is $h_\eta\in X_{\gamma+1}$ that is $u$-equivalent to $f_\eta$. Now, using collection and choice in $M$, we obtain a sequence $\la h_\eta\mid\eta<\delta\ra\in M$ of functions $h_\eta\in X_{\gamma+1}$ such that $j(h_\eta)(\kappa)=j(f_\eta)(\kappa)=y_\eta$. By the $\ltkappa$-closure of $X_{\gamma+1}$ in $M$, the sequence $\la h_\eta\mid\eta<\delta\ra$ is an element of $X_{\gamma+1}$, from which it easily follows that $\vec y\in Y_{\gamma+1}$.

\end{proof}

\begin{lemma}\label{le:strongramseycharacterization}
If $\kappa$ is strongly Ramsey, then every $A\of\kappa$ is contained in a $\kappa$-special $\kappa$-model $M$ for which there exists a weakly amenable $M$-ultrafilter on $\kappa$ with the ultrapower map $j:M\to N$ having $M=V_{j(\kappa)}^N$. Consequently, $M\models\ZFC$ and $M\in N$.
\end{lemma}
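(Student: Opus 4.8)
The plan is to produce $M$ in the form $M=V_{j_1(\kappa)}^{N_1}$, where $j_1:M_1\to N_1$ is the ultrapower by a weakly amenable $M_1$-ultrafilter $U_1$ on a suitably chosen strongly Ramsey witness $M_1$. The point of this shape is that then $M$ and $M_1$ have exactly the same subsets of $\kappa$, which is precisely what allows $U_1$ itself to serve as a weakly amenable $M$-ultrafilter.

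Concretely, I would fix $A\of\kappa$ and apply the definition of strongly Ramsey to a subset $A^*\of\kappa$ coding the genuine structure $\la V_\kappa,\in,A\ra$ together with a $\kappa$-special $\kappa$-model $P\ni A$ (the latter built without large cardinals as the transitive collapse of a continuous elementary chain of size-$\kappa$, successively $\ltkappa$-closed elementary submodels of some $V_\theta$ with $\theta$ inaccessible). This gives a $\kappa$-model $M_1\ni A^*$ with a weakly amenable $M_1$-ultrafilter $U_1$ and ultrapower $j_1:M_1\to N_1$. Decoding $A^*$ returns the real $V_\kappa$ and $A$ to $M_1$; since $M_1$ is $\ltkappa$-closed and $\kappa$ is genuinely inaccessible, $M_1$ computes $V_\kappa$ correctly and satisfies ``$\kappa$ is inaccessible'', hence ``$V_\kappa\models\ZFC$''. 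By Lemma~\ref{le:specialult}, $N_1$ inherits $\kappa$-specialness from $M_1$.

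Now set $M=V_{j_1(\kappa)}^{N_1}$. By routine rank and absoluteness arguments: $M$ is transitive of size $\kappa$ with $\kappa\in M$; $M$ is $\ltkappa$-closed, because $N_1$ is $\ltkappa$-closed and believes $j_1(\kappa)$ regular, so a short sequence below $j_1(\kappa)$ lying in $N_1$ is bounded there; $\kappa$ is inaccessible in $M$ and $M\models\ZFC$, since $N_1\models$``$j_1(\kappa)$ is inaccessible'' (which also makes $V_{j_1(\kappa)}$ a set of $N_1$, so $M\in N_1$); $A\in M$; and $P(\kappa)^M=P(\kappa)^{N_1}=P(\kappa)^{M_1}$ by powerset preservation together with $M$ being a rank-initial segment of $N_1$ containing $P(\kappa)^{N_1}$. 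The $\kappa$-specialness of $M$ follows by intersecting the chain witnessing $\kappa$-specialness of $N_1$ with $M$ and checking Definition~\ref{def:special}, again using regularity of $j_1(\kappa)$ in $N_1$ to see each piece is an element of $M$ of size $\kappa$. Since $M$ and $M_1$ share all subsets of $\kappa$, hence all functions and short sequences on $\kappa$, the assertion ``$U_1$ is a normal ultrafilter'' transfers from $\la M_1,\in,U_1\ra$ to $\la M,\in,U_1\ra$, so $U_1$ is an $M$-ultrafilter; for weak amenability, given $z\in M$ with $|z|^M\le\kappa$, reduce to $z\of P(\kappa)^M$, note an enumeration $h:\kappa\to z$ in $M$ is coded by a subset of $\kappa$ so $h,z\in M_1$ and $U_1\cap z\in M_1$ by weak amenability of $U_1$ to $M_1$, and then $S=\{\xi<\kappa\mid h(\xi)\in U_1\}\in M_1$ is a subset of $\kappa$, hence in $P(\kappa)^{M_1}=P(\kappa)^M$, so $S\in M$ and $U_1\cap z=h\image S\in M$. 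Finally, letting $j:M\to N=\Ult(M,U_1)$, I would check $M=V_{j(\kappa)}^N$: elementarity gives $j(V_\kappa^M)=V_{j(\kappa)}^N$ and $j_1(V_\kappa^{M_1})=V_{j_1(\kappa)}^{N_1}=M$; but $V_\kappa^M=V_\kappa^{M_1}$ is the genuine $V_\kappa$ and $M,M_1$ have exactly the same functions $\kappa\to V_\kappa$, so the $U_1$-ultrapowers of $V_\kappa$ formed from functions in $M$ and from functions in $M_1$ are literally the same $\in$-structure, and being transitive its two realizations coincide, yielding $V_{j(\kappa)}^N=M$. The concluding sentence is then immediate: $M\models\ZFC$ because $j(\kappa)$ is inaccessible in $N$, and $M=V_{j(\kappa)}^N\in N$.

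The main obstacle is the very first step: obtaining a strongly Ramsey witness $M_1$ that is at once $\kappa$-special and has $P(\kappa)^{M_1}=P(\kappa)^M$ for the eventual $M$. One cannot simply take an arbitrary witness $M_1'$ and inflate it to a $\kappa$-special $M_1\supseteq M_1'$ by a Skolem-hull-and-collapse argument: such an $M_1$ picks up new subsets of $\kappa$, and the restriction of $U_1'$ then fails to be weakly amenable to the $\ZFC$-sized model we ultimately want. Coding a $\kappa$-special structure into $A^*$ and propagating $\kappa$-specialness through the ultrapower via Lemma~\ref{le:specialult} is the way around this, and it is what forces the $V_{j_1(\kappa)}^{N_1}$ construction; carrying out that first step carefully is where the real work lies.
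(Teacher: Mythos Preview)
Your overall architecture---take $M=V_{j_1(\kappa)}^{N_1}$ for a single strongly Ramsey witness $j_1:M_1\to N_1$ and reuse $U_1$ as the $M$-ultrafilter---is sound and is essentially what the paper does, though the paper routes through a second iterate of the original embedding to obtain the same $M$; your direct argument that $V_{j(\kappa)}^N=V_{j_1(\kappa)}^{N_1}$ via agreement of functions $\kappa\to V_\kappa$ accomplishes the same end more economically.

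The genuine gap is the $\kappa$-specialness of $M$. You invoke Lemma~\ref{le:specialult} to transfer $\kappa$-specialness from $M_1$ to $N_1$, intending then to intersect the $N_1$-chain with $M$; but nothing in your argument makes $M_1$ itself $\kappa$-special. Coding an auxiliary $\kappa$-special $P$ into $A^*$ only places $P$ into $M_1$ as an \emph{element}, which says nothing about $M_1$ possessing a witnessing chain, and $U_1$ need not be weakly amenable to $P$, so $P$ cannot stand in for $M_1$ either. This is exactly the circularity you flag in your final paragraph, and the proposed workaround does not escape it.

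The repair is to drop Lemma~\ref{le:specialult} here and build the chain for $M$ directly inside $N_1$, which is what the paper does (inside its $N$). You already have what is needed: $M=V_{j_1(\kappa)}^{N_1}\in N_1$, the target $N_1$ is a $\kappa$-model by Remark~\ref{rem:ultrapowers}(2), and $j_1(\kappa)$ is inaccessible in $N_1$. Externally enumerate $M=\{a_\xi\mid\xi<\kappa\}$; working inside $N_1$, build $X_0\prec M$ of size $\kappa$ with $a_0\in X_0$ and $X_0^{\ltkappa}\subseteq X_0$, and continue recursively, taking unions at limits. Each $X_\xi$ is a size-$\kappa$ subset of $V_{j_1(\kappa)}^{N_1}$, so by inaccessibility of $j_1(\kappa)$ in $N_1$ it lies in $M$, and the chain witnesses that $M$ is $\kappa$-special. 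With this correction the proof goes through, and the coded $P$ (together with the inaccessible $\theta>\kappa$ you assumed to build it, which need not exist) can be discarded.
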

\begin{proof}
Fix $A\of\kappa$ and a $\kappa$-model $\Mbar$ containing $A$ for which there exists a weakly amenable $\Mbar$-ultrafilter $U$ on $\kappa$ with the ultrapower map $\bar j:\Mbar\to \Nbar$. Note that since $\Mbar$ is a $\kappa$-model, we automatically have $V_\kappa\in \Mbar$. Also, $\Nbar$ is a $\kappa$-model by Remarks~\ref{rem:ultrapowers}~(2). It is easy to see that $U$ remains a weakly amenable ultrafilter for the $\kappa$-model $\her{\kappa}^{\Mbar}=\{B\in \Mbar\mid |\text{Trcl}(B)|\leq\kappa\}$, which lets us assume, without loss of generality, that $\Mbar=\her{\kappa}^{\Mbar}$. It now follows by weak amenability that $\Mbar=\her{\kappa}^{\Nbar}$ and hence $\Mbar\in \Nbar$. Let $W$ be the $\Nbar$-ultrafilter for the second stage of the iteration and note that $\Ult(\Nbar, W)$ is well-founded as $U$ is $\omega_1$-intersecting and hence iterable. It is a standard fact from the theory of iterated ultrapowers that $\Ult(\Nbar, W)\cong\Ult(\Nbar, U)$, which must then be well-founded. So we let $j:\Nbar\to K$ be the ultrapower map of $\Nbar$ by $U$. Using that $\Mbar\in \Nbar$, we may restrict $j$ to $j:\Mbar \to j(\Mbar)$, and observe that, since $\Mbar$ contains all functions $f:\kappa\to\Mbar$ of $\Nbar$, it is precisely the ultrapower of $\Mbar$ by $U$. Thus, $j(\Mbar)=\Nbar=\her{j(\kappa)}^K$ and $\bar j=j$ on $\Mbar$, from which it follows that $\bar j(V_\kappa)=V_{\bar j(\kappa)}^\Nbar=V_{ j(\kappa)}^K\of \Nbar$. Now we let $M=V_{j(\kappa)}^K$ and $N=j(M)$. Since $M=V_{\bar j(\kappa)}^{\Nbar}$, subsets of size $\kappa$ of $M$ in $\Nbar$ are elements of $M$, and so, in particular, $\Nbar$ thinks that $M$ is a $\kappa$-model and it is correct about this by virtue of being a $\kappa$-model itself. Consider the restriction of $j$ to $j:M\to N$, which is precisely the ultrapower of $M$ by $U$ as  $M$ contains all functions $f:\kappa\to M$ of $K$. It is clear that $M=V_{j(\kappa)}^N$ and $A\in M$. It  remains to find a sequence of submodels $\la X_\xi\mid\xi<\kappa\ra$ witnessing that $M$ is $\kappa$-special.

Externally, we fix an enumeration $M=\{a_\xi\mid\xi<\kappa\}$. Inside $N$, we build $X_0\prec M$ of size $\kappa$ such that $a_0\in X_0$ and $X_0^{<\kappa}\subseteq X_0$. Note that $X_0\in M$ and $M$ knows that it has size $\kappa$ since subsets of $V_{j(\kappa)}^N$ of size $\kappa$ are elements of
$V_{j(\kappa)}^N$ by the inaccessibility of $j(\kappa)$ in $N$. It is also clear that $X_0^{\ltkappa}\subseteq X_0$ in $M$. Now suppose inductively that we have $X_\xi\prec M$ and $X_\xi$ is an element of size $\kappa$ in $M$. Inside $N$, we build $X_{\xi+1}\prec M$ of size $\kappa$ such that $X_\xi,a_\xi\in X_{\xi+1}$ and $X_{\xi+1}^{\ltkappa}\subseteq X_{\xi+1}$. Again, it follows that $X_{\xi+1}$ is an element of size $\kappa$ in $M$ and $X_{\xi+1}^{\ltkappa}\subseteq X_{\xi+1}$ in $M$. At limit stages $\lambda$, we let $X_\lambda=\Union_{\xi<\lambda} X_\xi$, and note that $X_\lambda$ is an element of size $\kappa$ in $M$ since it is closed under $\ltkappa$-sequences. It is clear that the sequence $\la X_\xi\mid \xi<\kappa\ra$ witnesses that $M$ is $\kappa$-special.
\end{proof}
\begin{lemma}\label{le:ramseycharacterization}
If $\kappa$ is Ramsey, then every $A\of\kappa$ is contained in an $\omega$-special weak $\kappa$-model $M$ for which there exists a weakly amenable $\omega_1$-intersecting $M$-ultrafilter on $\kappa$ with the ultrapower map $j:M\to N$ having $M=V_{j(\kappa)}^N$. Consequently, $M\models\ZFC$ and $M\in N$.
\end{lemma}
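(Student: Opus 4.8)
The plan is to mimic the proof of Lemma~\ref{le:strongramseycharacterization}, but starting from the Ramsey characterization in Theorem~\ref{th:ramseyult} instead of the strongly Ramsey one, and to pay careful attention to the fact that we now only have a \emph{weak} $\kappa$-model to work with, so no closure under countable sequences is available. First I would fix $A\of\kappa$ and use Theorem~\ref{th:ramseyult} to obtain a weak $\kappa$-model $\Mbar$ containing $A$ together with a weakly amenable $\omega_1$-intersecting $\Mbar$-ultrafilter $U$ on $\kappa$. As in the strongly Ramsey case, I would first pass to $\her{\kappa}^{\Mbar}$ to assume $\Mbar=\her{\kappa}^{\Mbar}$, noting that $U$ remains weakly amenable and $\omega_1$-intersecting for this submodel, and that by weak amenability $\Mbar=\her{\kappa}^{\Nbar}$ where $\bar j:\Mbar\to\Nbar$ is the ultrapower map; hence $\Mbar\in\Nbar$.

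The next step is to run the same second-stage/iteration argument: since $U$ is $\omega_1$-intersecting it is iterable (Kunen), so $\Ult(\Nbar,W)\cong\Ult(\Nbar,U)$ is well-founded, giving an ultrapower map $j:\Nbar\to K$ by $U$; restricting to $\Mbar$ shows $j\restrict\Mbar=\bar j$ and $j(\Mbar)=\Nbar=\her{j(\kappa)}^{K}$. Then I set $M=V_{j(\kappa)}^{K}$ and $N=j(M)$, observe that $M=V_{\bar j(\kappa)}^{\Nbar}$, that $j\restrict M:M\to N$ is the ultrapower of $M$ by $U$ (since $M$ contains all functions $\kappa\to M$ of $K$), that $M=V_{j(\kappa)}^{N}$, $M\models\ZFC$, and $M\in N$. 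The only genuinely new wrinkle compared to Lemma~\ref{le:strongramseycharacterization} is constructing the $\omega$-special structure on $M$: instead of building a $\kappa$-length chain of $\ltkappa$-closed submodels we build a chain $\la X_n\mid n<\omega\ra$ of length $\omega$. This is easier, not harder, since $N$ (being the ultrapower by an $\omega_1$-intersecting $M$-ultrafilter for a model whose $H_{\kappa^+}$ is $\Mbar$) has enough closure to build inside it, at each stage, $X_{n+1}\prec M$ of size $\kappa$ with $X_n\union\{a_n\}\of X_{n+1}$ and $X_{n+1}^{\ltkappa}\of X_{n+1}$; here $\la a_n\mid n<\omega\ra$ is any external enumeration of a cofinal-in-$M$ set of size $\kappa$ — more precisely, we enumerate $M$ as $\{a_\xi\mid\xi<\kappa\}$ externally and at stage $n$ throw in $\{a_\xi\mid\xi<\lambda_n\}$ where $\la\lambda_n\mid n<\omega\ra$ is increasing and cofinal in $\kappa$, so that $\Union_n X_n=M$. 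Taking $X_\omega$ is unnecessary since the chain already has length $\omega$; its union is all of $M$, and $M=\Union_{n<\omega}X_n$ witnesses $\omega$-specialness.

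The point to check carefully — and the step I expect to be the main (if modest) obstacle — is that $X_n\in M$ and $M$ correctly computes $X_n$ to have size $\kappa$ and to be $\ltkappa$-closed. This works exactly as in Lemma~\ref{le:strongramseycharacterization}: since $M=V_{j(\kappa)}^{N}$ and $j(\kappa)$ is inaccessible in $N$, any subset of $M$ of size $\kappa$ lying in $N$ is already an element of $M=V_{j(\kappa)}^{N}$, so once $X_{n+1}$ is built inside $N$ with the stated properties it lands in $M$ with those properties preserved; $\ltkappa$-closure of $X_{n+1}$ witnessed in $N$ is witnessed in $M$ for the same reason. The construction of $X_{n+1}$ inside $N$ with $X_{n+1}^{\ltkappa}\of X_{n+1}$ uses only that $N\models\ZFC$ (in fact $\ZFC^-$ suffices) and that $|M|^{N}=\kappa$, exactly as before; the Ramsey-versus-strongly-Ramsey distinction plays no role in this last part, so the proof is essentially a transcription of the previous lemma with $\kappa$-length chains replaced by $\omega$-length chains and ``$\kappa$-model'' replaced by ``weak $\kappa$-model'' throughout the setup.
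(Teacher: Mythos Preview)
Your setup through the second-stage iteration is fine and matches the paper, but the final step — building the $\omega$-chain directly on $M$ by transcribing the $\kappa$-chain argument of Lemma~\ref{le:strongramseycharacterization} — has a genuine gap. In that lemma the construction works because (i) at each successor stage only the single element $a_\xi\in M\of N$ is added as a seed, and (ii) at limit stages $\lambda<\kappa$ the sequence $\la X_\xi\mid\xi<\lambda\ra$ lies in $N$ by the external $\ltkappa$-closure of $N$, so $X_\lambda\in N$. Doing $\kappa$ many steps then covers $M$. In your Ramsey version $N$ is only a weak $\kappa$-model with no external closure, and you only have $\omega$ many steps. Adding one element per step does not exhaust $M$; your fix of seeding with $\{a_\xi\mid\xi<\lambda_n\}$ fails because this is an \emph{external} subset of $M$ with no reason to lie in $N$, so you cannot carry out the construction of $X_{n+1}$ inside $N$ with that seed, and if you build $X_{n+1}$ externally it need not be an element of $M$, violating $\omega$-specialness. (Also, $|M|^N=j(\kappa)$, not $\kappa$; but that is not what breaks the argument.)

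The paper handles this by inserting an extra preliminary step you omitted: before running the second-stage iteration, it first \emph{shrinks} the original weak $\kappa$-model to an $\omega$-special one. Working inside $\overline\Nbar$, it builds transitive $M_n\prec\overline\Mbar$ of size $\kappa$ with $M_n^{\ltkappa}\of M_n$, $M_n\in M_{n+1}$, and — crucially for weak amenability — $\overline U\cap M_n\in M_{n+1}$; then $\Mbar:=\Union_n M_n$ is $\omega$-special and $U:=\overline U\cap\Mbar$ is still a weakly amenable $\omega_1$-intersecting $\Mbar$-ultrafilter. Now Lemma~\ref{le:specialult} makes the ultrapower $\Nbar$ $\omega$-special, and after the iteration one simply \emph{intersects} the witnessing chain $\la\overline X_n\ra$ for $\Nbar$ with $M=V_{\bar j(\kappa)}^{\Nbar}$ to get the chain for $M$. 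The point is that the $\omega$-chain is produced on the source side (where shrinking is available) and transported, rather than manufactured directly on $M$.
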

\begin{proof}
Fix a weak $\kappa$-model $\overline{\Mbar}$ containing $A$ and $V_\kappa$ for which there exists a weakly amenable $\omega_1$-intersecting
$\overline{\Mbar}$-ultrafilter $\overline U$ on $\kappa$ with the ultrapower map $\bar{\bar{j}}:\overline{\Mbar}\to\overline{\Nbar}$. As in the proof of Lemma~\ref{le:strongramseycharacterization}, we may assume that $\overline \Mbar=\her{\kappa}^{\overline\Nbar}$. Inside $\overline\Nbar$, we build a transitive $M_0\prec \overline\Mbar$ of size $\kappa$ with $M_0^{\ltkappa}\of M_0$. Since $\overline \Mbar=\her{\kappa}^{\overline\Nbar}$, it is clear that $M_0$ is an element of size $\kappa$ in $\overline{\Mbar}$. Now suppose inductively that we have $M_n\prec \overline\Mbar$ and $M_n$ is an element of size $\kappa$ in $\overline\Mbar$. Inside $\overline \Nbar$, we build a transitive $M_{n+1}\prec \overline \Mbar$ of size $\kappa$ such that $M_n\in M_{n+1}$, $U\cap M_n\in M_{n+1}$ (using weak amenability), and $M_{n+1}^{\ltkappa}\subseteq M_{n+1}$. Now we let $\Mbar=\Union_{n<\omega}M_n$. The model $\Mbar$ is $\omega$-special by construction and $\Mbar=\her{\kappa}^{\Mbar}$. The fact that $U\cap M_n$ are elements of $\Mbar$ ensures that $U=\overline U\cap\Mbar$ is a weakly amenable $\Mbar$-ultrafilter, and it is also clearly $\omega_1$-intersecting. So we let $\bar j:\Mbar\to \Nbar$ be the ultrapower map by $U$. By Lemma \ref{le:specialult}, the ultrapower $\Nbar$ is $\omega$-special, and so we may fix a sequence $\la \overline X_n\mid n<\omega\ra$ witnessing this. Now we exactly follow the proof of Lemma \ref{le:strongramseycharacterization} to we obtain the ultrapower map $j:M\to N$ by $U$ with $M=V_{j(\kappa)}^N=V_{\bar j(\kappa)}^\Nbar$. For $n<\omega$, we let $X_n=\overline X_n\cap V_{\bar j(\kappa)}^\Nbar$ and argue that a tail segment of the sequence $\la X_n\mid n<\omega\ra$ witnesses that $M$ is $\omega$-special. Each $X_n$ is an element of $M=V_{\bar j(\kappa)}^\Nbar$ since it is a subset of it of size $\kappa$, and for the same reason, the $X_n$ are closed under $\ltkappa$-sequences in $M$. Once $\bar j(\kappa)\in \overline X_n$, it is easy to see that for all formulas $\varphi(x)$ and $a\in X_n$, we have that $X_n\models \varphi(a)$ if and only if $\overline X_n\models ``V_{\bar j(\kappa)}^{\Nbar}\models \varphi(a)"$. Thus, $X_n\prec M=V_{\bar j(\kappa)}^\Nbar$ for all $n$ such that $\bar j(\kappa)\in \overline X_n$.
\end{proof}
\section{Forcing preliminaries}\label{se:forcingpreliminaries}
The proof of the main theorem will proceed, as do most standard indestructibility arguments for large cardinals, by lifting elementary embeddings characterizing the cardinal from the ground model to the forcing extension.\footnote{An embedding $j:M\to N$ is said to lift to another embedding $j^*:M^*\to N^*$, where
$M\of M^*$ and $N\of N^*$, if the two embeddings agree on the smaller domain, that is $j^*\restrict M = j$.} Below we review the lifting techniques which will be used in the proof.

The \emph{lifting criterion} states that in order to lift an embedding $j:M\to N$ to the forcing extension $M[G]$ by an $M$-generic filter $G\subseteq \p$ for some $\p\in M$, we must find an $N$-generic filter $H$ for the image poset $j(\p)$ containing $j\image G$ as a subset.
\begin{lemma}[Lifting Criterion]\label{le:liftingcriterion} Suppose
that $M$ is a model of $\ZFC^-$ and \hbox{$j:M\to N$} is an elementary embedding. If $M[G]$ and $N[H]$ are generic extensions by forcing notions $\p$
and $j(\p)$ respectively, then the embedding $j$ lifts to an embedding
$j:M[G]\to N[H]$ with $j(G)=H$ if and only if $j\image G\of H$.
\end{lemma}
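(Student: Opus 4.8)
The plan is to get the forward direction by a one-line unravelling and the reverse direction by the standard ``lift using the same names'' construction. For the forward direction, suppose $j$ lifts to $j^*\colon M[G]\to N[H]$ with $j^*\restrict M=j$ and $j^*(G)=H$. If $p\in G$ then, since $\p\in M$ and $M$ is transitive, $p\in M$, so $j^*(p)=j(p)$; and since $j^*$ is elementary and $p\in G$, we get $j(p)=j^*(p)\in j^*(G)=H$. Hence $j\image G\of H$.

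For the reverse direction, assume $j\image G\of H$. Every element of $M[G]$ has the form $\tau^G$ (its $G$-interpretation) for some $\p$-name $\tau\in M$, and I would define $j^*(\tau^G)=j(\tau)^H$ for such $\tau$. Everything then follows from one observation applied uniformly: because $M\models\ZFC^-$ --- and here it matters that $\ZFC^-$ includes the collection scheme, so that the definability and truth lemmas for forcing are available in $M$ --- for any $\p$-names $\tau_1,\dots,\tau_n\in M$ and any formula $\varphi$ we have $M[G]\models\varphi(\tau_1^G,\dots,\tau_n^G)$ if and only if some $p\in G$ satisfies $p\forces_\p^M\varphi(\tau_1,\dots,\tau_n)$. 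Applying the elementarity of $j$ to such a $p$ gives $j(p)\forces_{j(\p)}^N\varphi(j(\tau_1),\dots,j(\tau_n))$; since $j(p)\in j\image G\of H$ and the truth lemma holds in $N$ as well (again $N\models\ZFC^-$ by elementarity), this yields $N[H]\models\varphi(j(\tau_1)^H,\dots,j(\tau_n)^H)$. Running this with $\varphi$ and with $\neg\varphi$ shows at once that $j^*$ is well-defined and injective (take $\varphi$ to be ``$x=y$''), that it preserves and reflects membership (take ``$x\in y$''), and that it is fully elementary.

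It remains to verify the two boundary clauses. That $j^*$ extends $j$ holds because the check-name operation $a\mapsto\check a$ is given by a formula absolute between $M$ and $N$, whence $j(\check a)=\check{j(a)}$ and so $j^*(a)=j^*((\check a)^G)=(\check{j(a)})^H=j(a)$ for $a\in M$. That $j^*(G)=H$ holds because the canonical name $\dot G$ for the generic filter is likewise definable by an absolute formula, so $j(\dot G)$ is exactly the canonical $j(\p)$-name for the generic filter on $j(\p)$, giving $j^*(G)=(j(\dot G))^H=H$. I do not anticipate any real obstacle: the only points that require attention are that $M$, and hence $N$, satisfy enough set theory for the forcing theorem to apply --- which is precisely why $\ZFC^-$ is taken with collection in place of mere replacement --- and that the name constructions $\check{\phantom{a}}$ and $\dot G$ are absolute enough to commute with $j$.
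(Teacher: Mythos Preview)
Your proof is correct and is exactly the standard argument. The paper itself does not prove this lemma; it is stated as a well-known fact from the folklore of lifting arguments, with a reference to \cite{cummings:weaklycompact} at the end of Section~\ref{se:forcingpreliminaries} for a fuller treatment of such techniques. Your write-up supplies precisely the expected details: the forward direction is immediate, and for the reverse you define the lift on names by $j^*(\tau^G)=j(\tau)^H$ and use the forcing theorem in $M$ (available because $\ZFC^-$ is taken with collection) together with elementarity of $j$ and the hypothesis $j\image G\subseteq H$ to verify well-definedness and elementarity. The observations that $j(\check a)=\check{j(a)}$ and $j(\dot G)=\dot H$ by absoluteness are the right way to check $j^*\restrict M=j$ and $j^*(G)=H$.
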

\noindent If $j$ happens to be an ultrapower map by some $M$-ultrafilter $U$, then it is not difficult to see, using Remarks~\ref{rem:ultrapowers}~(1), that the lift $j:M[G]\to N[H]$ is again an ultrapower map by an $M[G]$-ultrafilter extending $U$.

Typically, we construct the $N$-generic filter $H\supseteq j\image G$ for $j(\p)$ using the \emph{diagonalization criterion}. The diagonalization criterion is a generalization to larger models of the standard diagonalization argument for constructing generic filters for countable models of set theory.
\begin{lemma}[Diagonalization Criterion]\label{le:diag(1)}
If $\p$ is a forcing notion in a model $M$ of $\ZFC^-$ and for some
cardinal $\kappa$ the following criteria are satisfied:
\begin{itemize}
\item[(1)] $M^{\ltkappa}\of M$,
\item[(2)] $\p$ is $\ltkappa$-closed in $M$,
\item[(3)] $M$ has at most $\kappa$ many antichains of $\p$,
\end{itemize}
then there is an $M$-generic filter for $\p$.
\end{lemma}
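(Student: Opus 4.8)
The plan is to run the classical transfinite diagonalization: build a $\le$-decreasing sequence $\la p_\alpha\mid\alpha<\kappa\ra$ of conditions in $\p$ and let $G$ be its upward closure. As a preliminary reduction, I would note that it suffices to arrange that $G$ meet every maximal antichain of $\p$ that belongs to $M$. Indeed, if $D\in M$ is dense in $\p$, then, since $M$ is a model of $\ZFC^-$ (hence has the well-ordering principle and collection), $M$ can thin $D$ to a maximal antichain $A\of D$ with $A\in M$ by the usual greedy recursion along a well-ordering of $D$; any filter meeting $A$ also meets $D$, so meeting every maximal antichain of $\p$ in $M$ yields $M$-genericity. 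A maximal antichain is in particular an antichain, so hypothesis~(3) bounds the number of maximal antichains of $\p$ in $M$ by $\kappa$, and I may fix, in $V$, an enumeration $\la A_\alpha\mid\alpha<\kappa\ra$ of all of them (padding with repetitions if there are fewer than $\kappa$).

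I would then construct $\la p_\alpha\mid\alpha<\kappa\ra$ by recursion in $V$, starting from an arbitrary $p_0\in\p$. At a successor stage, given $p_\alpha$, the maximality of $A_\alpha$ yields some $a\in A_\alpha$ compatible with $p_\alpha$, and I let $p_{\alpha+1}$ be a common lower bound of $p_\alpha$ and $a$. At a limit stage $\lambda<\kappa$, the sequence $\la p_\beta\mid\beta<\lambda\ra$ assembled so far has length $\lambda<\kappa$ and takes all its values in $M$ (as $M$ is transitive and $\p\of M$), so it is an element of $M$ by hypothesis~(1); being $\le$-decreasing, it has a lower bound in $M$ by the $\ltkappa$-closure of $\p$ in $M$ from hypothesis~(2), and I let $p_\lambda$ be such a lower bound. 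This limit step is the heart of the argument and the only point where the hypotheses genuinely interact: condition~(1) is exactly what certifies the partial sequence as a member of $M$, which is precisely the input that condition~(2) requires.

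Finally, with $G=\{p\in\p\mid p_\alpha\le p\text{ for some }\alpha<\kappa\}$, I would verify that $G$ is an $M$-generic filter. It is upward closed by construction, and it is directed because the $p_\alpha$ form a $\le$-decreasing chain: if $p,p'\in G$ are witnessed by $p_\alpha$ and $p_\beta$ and $\gamma=\max\{\alpha,\beta\}$, then $p_\gamma\in G$ with $p_\gamma\le p$ and $p_\gamma\le p'$; hence $G$ is a filter on $\p$. For each $\alpha<\kappa$ we arranged $p_{\alpha+1}\le a$ for some $a\in A_\alpha$, so $a\in G$ and $G\cap A_\alpha\ne\emptyset$; since $\la A_\alpha\mid\alpha<\kappa\ra$ enumerates every maximal antichain of $\p$ in $M$, the preliminary reduction shows $G$ is $M$-generic. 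I do not expect any real difficulty beyond the bookkeeping at limit stages noted above; the only further care needed is that the recursion is carried out externally in $V$, appealing at limits to the closure facts asserted by~(1) and~(2), and that $M\models\ZFC^-$ is what makes the reduction of dense sets to maximal antichains work.
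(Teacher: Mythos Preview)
Your proof is correct and follows the standard transfinite diagonalization argument. The paper itself does not prove this lemma, treating it as folklore (and referring the reader to~\cite{cummings:weaklycompact} for standard lifting techniques), so there is nothing to compare against; your argument is exactly the expected one, with the reduction from dense sets to maximal antichains and the use of hypotheses~(1) and~(2) at limit stages being the only nontrivial points, both of which you handle correctly.
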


The diagonalization criterion guarantees the existence of an $N$-generic filter for any $\ltkappa$-closed poset in a $\kappa$-model $N$, making it an indispensable component of indestructiblity arguments involving strongly Ramsey cardinals whose elementary embeddings have $\kappa$-model targets by Remarks~\ref{rem:ultrapowers}~(2). But, because of the closure requirement on the target model, the diagonalization criterion is not applicable to the Ramsey elementary embeddings.
For this reason, \cite{gitman:ramseyindes} introduced diagonalization criterion (2) that, at the expense of extra assumptions, does not require any closure on the model.
\begin{lemma}[Diagonalization Criterion (2)]\label{le:diag(2)}
If $\p$ is a forcing notion in a model $M$ of $\ZFC^-$ and for some
cardinal $\kappa$  the following criteria are satisfied:
\begin{itemize}
\item[(1)] $\p$ is $\leqkappa$-closed in $M$,
\item[(2)] $M=\Union_{n<\omega}X_n$ for some sequence $\la X_n\mid n<\omega\ra$ such that each $X_n$ is
 an element of size $\kappa$ in $M$,
\end{itemize}
then there is an $M$-generic filter for $\p$.
\end{lemma}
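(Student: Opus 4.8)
The plan is to realize the $M$-generic filter as the upward closure in $\p$ of a descending $\omega$-sequence of conditions $p_0 \geq p_1 \geq p_2 \geq \cdots$, constructed externally, where $p_{n+1}$ is arranged to meet every dense subset of $\p$ lying in $X_n$. Since $M = \Union_{n<\omega} X_n$, every dense $D \in M$ belongs to some $X_n$, so such a sequence meets every dense subset of $\p$ in $M$; and its upward closure $G = \{q\in\p : \exists n\ p_n\leq q\}$ is a filter, since $p_0\in G$ and the $p_n$ form a chain, so any two members of $G$ have a common lower bound among the $p_n$. Thus no lower bound for the $\omega$-sequence itself is needed, which is the whole point, since $M$ may have no closure whatsoever.

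The heart of the matter is the following claim, established inside $M$: for every $p \in \p$ and every $x \in M$ with $M\models |x|\leq\kappa$, there is $p'\leq p$ in $M$ meeting every dense subset of $\p$ that is an element of $x$. To prove it, work in $M$. The set $\mathcal D = \{D\in x : D \text{ is a dense subset of } \p\}$ exists by separation; assuming $\mathcal D\neq\emptyset$ (otherwise take $p' = p$), fix an enumeration $\langle D_\alpha\mid\alpha<\kappa\rangle$ of $\mathcal D$, with repetitions if $|\mathcal D|<\kappa$, and, using the well-ordering principle, a well-ordering of $\p$. Define a descending sequence $\langle q_\alpha\mid\alpha\leq\kappa\rangle$ by recursion: $q_0 = p$; $q_{\alpha+1}$ is the least $q\leq q_\alpha$ with $q\in D_\alpha$, which exists by density of $D_\alpha$; and for limit $\lambda\leq\kappa$, $q_\lambda$ is the least lower bound of $\langle q_\alpha\mid\alpha<\lambda\rangle$. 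Each proper initial segment of this recursion is a set of $M$ by collection, and all of these sequences have length $\leq\kappa$, so hypothesis (1), the $\leqkappa$-closure of $\p$ in $M$, supplies the required lower bounds in $M$; in particular $q_\kappa\in M$ is well-defined, and $p' = q_\kappa$ works, since $p'\leq q_{\alpha+1}\in D_\alpha$ for every $\alpha<\kappa$.

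Granting the claim, set $p_0 = \one$ and recursively, outside $M$, choose $p_{n+1}\in M$ to be a condition below $p_n$ meeting every dense subset of $\p$ in $X_n$; this is legitimate because $p_n\in M$, $X_n\in M$, and $M\models|X_n|=\kappa$, so the claim applies with $x = X_n$. Then $G = \{q\in\p : \exists n\ p_n\leq q\}$ is $M$-generic: it is a filter by the observation in the first paragraph, and if $D\in M$ is dense then $D\in X_n$ for some $n$, whence $p_{n+1}\in D\cap G$.

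The only real care needed, rather than genuine difficulty, lies in keeping the internal and external parts of the construction separate: each length-$\kappa$ sub-recursion must be performed inside $M$ so that $M$'s own $\leqkappa$-closure can be invoked, and so that collection inside $M$ guarantees the initial segments are sets of $M$, making the closure hypothesis applicable at limit stages; whereas the outer recursion on $n<\omega$ is unavoidably external and is harmless precisely because the $\omega$-sequence need not be continued to a lower bound.
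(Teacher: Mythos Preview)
Your proof is correct and follows essentially the same approach as the paper: both construct, for each $n$, a condition $p_n$ inside $M$ lying below a $\kappa$-length descending sequence that meets all dense sets in $X_n$ (using $\leqkappa$-closure and $|X_n|^M=\kappa$), and then take the external $\omega$-sequence $\la p_n\mid n<\omega\ra$ to generate the filter. Your exposition isolates the internal step as a separate claim and is more explicit about the use of the well-ordering principle and collection in carrying out the recursion inside $M$, but the underlying argument is the same.
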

\begin{proof}
Since $\p$ is $\leqkappa$-closed in $M$ and $X_0$ has size $\kappa$ in $M$, working inside $M$, we construct a $\kappa$-length descending sequence of conditions meeting all dense subsets of $\p$ that are elements of $X_0$, and choose a condition $p_0\in \p$ below the sequence. Suppose inductively that we are given a condition $p_n$ having the property that if $D$ is a dense subset of $\p$ in $X_n$, then there is a condition above $p_n$ meeting $D$. Working inside $M$, we construct, below $p_n$, a descending $\kappa$-length sequence of conditions meeting all dense sets of $\p$ that are elements of $X_{n+1}$, and choose a condition $p_{n+1}\in \p$ below the sequence. Since $M=\Union_{n<\omega}X_n$, any filter $G$ generated by the sequence $\la p_n\mid n<\omega\ra$ is $M$-generic.
\end{proof}
\noindent Diagonalization criterion~(2) guarantees the existence of an $N$-generic filter for any $\lesseq\kappa$-closed poset in an $\omega$-special weak $\kappa$-model $N$. Since Ramsey elementary embeddings can be assumed to have $\omega$-special targets by Lemma~\ref{le:ramseycharacterization}, diagonolization criterion~(2) will play the role of the diagonalization criterion in lifting these embeddings.

It is also often necessary in the course of lifting arguments to be able to verify that a forcing extension $M[G]$ remains as closed in its overarching universe as the ground model $M$ was in its universe. For instance, to verify that the lift $j:M[G]\to N[H]$ of a ground model embedding $j:M\to N$ is one of the embeddings witnessing that $\kappa$ is strongly Ramsey in $V[G]$, we would have to argue, in particular, that $M[G]$ is closed under $\ltkappa$-sequences in $V[G]$. Below, we recall two standard facts about preserving the closure of a model of set theory to its forcing extension. In order to make the facts applicable to nontransitive models as well, we will first review the relevant notion of $X$-genericity for a nontransitive model $X\prec M\models \ZFC^-$. We will need the closure preservation for nontransitive models to argue that a forcing extension of an $\alpha$-special model is again $\alpha$-special by using as the witnessing submodels the forcing extensions of the ground model ones.

\begin{definition}\label{def:xgeneric}
Suppose $X\prec M\models \ZFC^-$ and $\p\in X$ is a
forcing notion, then a filter $G\of \p$ is $X$-\emph{generic} if $G\cap D\cap
X\neq \emptyset$ for every dense subset $D\in X$ of $\p$.
\end{definition}
\noindent Note that the usual definition of genericity and Definition \ref{def:xgeneric} coincide if $X$ happens to be transitive.
If $X\prec M\models\ZFC^-$ and $G$ is an $M$-generic filter for a poset $\p\in X$, then we define $X[G]=\{\tau_G\mid \tau\in X\text{ is a }\p\text{-name}\}$.
\begin{remark}\label{rem:xgenericelem}
If $X\prec M\models \ZFC^-$ and $G$ is an $M$-generic filter for a poset $\p\in X$ which is additionally $X$-generic, then $X[G]\prec M[G]$.
\end{remark}

\begin{lemma}[Ground Closure
Criterion]\label{le:groundclosure} Suppose $X\models \ZFC^-$, for some ordinal $\gamma$,
$X^{\gamma}\of X$ and there is in $M$, an $X$-generic filter $H\of
\p$ for a forcing notion $\p\in X$. Then $X[H]^{\gamma}\of X[H]$.
\end{lemma}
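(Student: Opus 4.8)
\emph{Proof proposal.} The plan is the standard ``name'' argument for propagating closure to forcing extensions, run inside the ambient model $M$ (recall $X\prec M$) and with care taken because $X$ may be nontransitive. Let $\vec a=\langle a_i\mid i<\gamma\rangle$ be any sequence of elements of $X[H]$, with $\vec a\in M$; the goal is to produce a single $\p$-name $\tau\in X$ with $\tau_H=\vec a$, which then gives $\vec a\in X[H]$.

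First I would choose the names. By the definition of $X[H]$, for each $i<\gamma$ the collection of $\p$-names $\tau\in X$ with $\tau_H=a_i$ is a nonempty subset of the set $X$, definable in $M$ from the parameters $X,\p,H,a_i$; so the well-ordering principle in $M$ yields a sequence $\vec\tau=\langle\tau_i\mid i<\gamma\rangle\in M$ of such names. Since $\vec\tau$ is a $\gamma$-sequence of elements of $X$, the hypothesis $X^\gamma\of X$ gives $\vec\tau\in X$. I would also record that $X^\gamma\of X$ forces $\gamma\of X$: one has $|X|\ge|\gamma|$ (otherwise $X^\gamma$ is too large to lie in $X$), so an injection $f\colon\gamma\to X$ chosen in $M$ is a $\gamma$-sequence from $X$ and hence lies in $X$, and then for each $r\in\mathrm{ran}(f)\of X$ the unique ordinal $i<\gamma$ with $f(i)=r$ belongs to $X$ by elementarity; these ordinals exhaust $\gamma$.

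Next I would assemble the name internally. Working inside $X$ --- a model of $\ZFC^-$ that now contains $\vec\tau$, $\gamma$, and $\p$ --- form the canonical $\p$-name $\tau$ coding the function with domain $\gamma$ whose value at each $i<\gamma$ is the $\dot G$-evaluation of $\tau_i$; explicitly $\tau=\{\langle\rho_i,\one\rangle\mid i<\gamma\}$, where $\rho_i$ is the canonical name for the ordered pair $\langle\check\imath,\tau_i\rangle$. Then $\tau\in X$. Evaluating by $H$ back in $M$: since $\gamma\of X$, the check-names $\check\imath$ computed inside $X$ coincide with the genuine ones, so $(\check\imath)_H=i$, and hence $\tau_H=\langle(\tau_i)_H\mid i<\gamma\rangle=\langle a_i\mid i<\gamma\rangle=\vec a$. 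Therefore $\vec a\in X[H]$, as required.

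The one step I expect to require care is the transfer of $\vec\tau$ into $X$: the names $\tau_i$ are selected externally, one by one, so there is no prior reason the sequence $\vec\tau$ should belong to $X$ --- it is precisely the closure hypothesis $X^\gamma\of X$ (together with the derived $\gamma\of X$, which is what makes the check-names come out right despite possible nontransitivity) that licenses this, after which $\ZFC^-$ inside $X$ finishes the job. Genericity of $H$ over $X$ is not actually used in this computation; it enters elsewhere, e.g.\ in Remark~\ref{rem:xgenericelem}.
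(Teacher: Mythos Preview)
Your proof is correct and is the standard argument. The paper does not actually prove this lemma: it is introduced as one of ``two standard facts about preserving the closure of a model of set theory to its forcing extension,'' with a pointer to \cite{cummings:weaklycompact} at the end of Section~\S\ref{se:forcingpreliminaries} for details. Your approach---choose names $\tau_i\in X$ for the $a_i$ in $M$, push the $\gamma$-sequence $\vec\tau$ into $X$ via the hypothesis $X^\gamma\subseteq X$, derive $\gamma\subseteq X$ so that check names behave correctly in the possibly nontransitive $X$, and then assemble a single name inside $X$ using $\ZFC^-$---is exactly what is intended, and your side remark that the $X$-genericity of $H$ plays no role in this particular lemma is also correct.
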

\begin{lemma}[Generic Closure Criterion]\label{le:genericclosure}
Suppose $X\prec M\models\ZFC^-$ and
for some ordinal $\gamma$, $X^{\lt\gamma}\of X$ in $M$. If $G\of \p$ is
$M$-generic for a forcing notion $\p\in X$ such that $\p\of X$ and
has $\gamma$-cc in $M$, then $X[G]^{\lt\gamma}\of X[G]$ in $M[G]$.
\end{lemma}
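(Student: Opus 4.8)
The plan is to take an arbitrary $\vec a=\la a_i\mid i<\delta\ra\in M[G]$ with $\delta<\gamma$ and each $a_i\in X[G]$, and to produce a $\p$-name for $\vec a$ lying in $X$; since $X[G]$ consists exactly of the evaluations $\tau^G$ of $\p$-names $\tau\in X$, this is what shows $\vec a\in X[G]$. Working in $M$, fix a $\p$-name $\dot{\vec a}$ with $\dot{\vec a}^G=\vec a$, and a condition $p_0\in G$ forcing that $\dot{\vec a}$ is a function with domain $\check\delta$ and range contained in $\check X[\dot G]$; such a $p_0$ exists because these statements hold of $\vec a$ in $M[G]$. For each $i<\delta$ set
\[
E_i=\{\,q\le p_0\mid\text{there is a }\p\text{-name }\tau\in X\text{ with }q\forces\dot{\vec a}(\check i)=\tau\,\},
\]
where a \emph{$\p$-name in $X$} means an element of $X$ that $X$ regards as a $\p$-name; since $X\prec M$ and $M$ is transitive, such a $\tau$ is genuinely a $\p$-name and $\tau^G\in X[G]$.

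The first step is a routine forcing-theoretic argument, carried out via the forcing relation inside $M$, showing that $E_i$ is dense below $p_0$: if some $q\le p_0$ had no extension in $E_i$, then no extension of $q$ forces $\dot{\vec a}(\check i)=\tau$ for any $\p$-name $\tau\in X$, so $q\forces\dot{\vec a}(\check i)\neq\tau$ for every such $\tau$, and since the elements of $\check X[\dot G]$ are precisely the values $\tau^{\dot G}$ of $\p$-names $\tau\in X$, this yields $q\forces\dot{\vec a}(\check i)\notin\check X[\dot G]$, contradicting $q\le p_0$. Now the hypotheses on $\p$ enter. Using the $\gamma$-cc in $M$, fix inside $M$ a maximal antichain $A_i\of E_i$ below $p_0$, so $|A_i|<\gamma$, and for each $a\in A_i$ fix a $\p$-name $\tau_{i,a}\in X$ with $a\forces\dot{\vec a}(\check i)=\tau_{i,a}$. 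Define
\[
\dot c=\{\,(\mathrm{op}(\check i,\tau_{i,a}),\,a)\mid i<\delta,\ a\in A_i\,\},
\]
where $\mathrm{op}(\sigma,\tau)$ is the canonical $\p$-name for the ordered pair $(\sigma^{\dot G},\tau^{\dot G})$. Since each $A_i$ is a maximal antichain below $p_0\in G$, the filter $G$ meets $A_i$ in a unique condition $a(i)$, so $\dot c^G=\la\tau_{i,a(i)}^G\mid i<\delta\ra=\la a_i\mid i<\delta\ra=\vec a$, using $a(i)\forces\dot{\vec a}(\check i)=\tau_{i,a(i)}$.

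The remaining point, $\dot c\in X$, is where the closure hypothesis and the $\gamma$-cc interact. From $X^{<\gamma}\of X$ in $M$ I would record two consequences: first, $\gamma\of X$ (the constant-$\emptyset$ sequence of length $\beta<\gamma$ lies in $X^{<\gamma}\of X$, and its domain $\beta$ can then be read off inside $X$), hence $\delta\in X$; and second, every subset of $X$ of size $<\gamma$ lying in $M$ is an element of $X$ (well-order it in $M$, realize it as the range of a sequence of length $<\gamma$ of elements of $X$, which is in $X^{<\gamma}\of X$, and take the range inside $X$). By the second consequence each $A_i\of\p\of X$ lies in $X$, and so does each function $a\mapsto\tau_{i,a}$, being a set of size $<\gamma$ of ordered pairs of elements of $X$. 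Hence $\la(A_i,\,a\mapsto\tau_{i,a})\mid i<\delta\ra$ is a sequence of length $<\gamma$ of elements of $X$, so it lies in $X^{<\gamma}\of X$; and since $\delta\in X$, the model $X\models\ZFC^-$ can reconstruct $\dot c$ from it. Thus $\dot c\in X$ with $\dot c^G=\vec a$, which finishes the proof.

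I expect this last paragraph to be the main obstacle: the names $\tau_{i,a}$ are chosen externally to $X$, indeed in $M$, so there is no a priori reason the natural name $\dot c$ built from them belongs to $X$; the $\gamma$-cc is exactly what bounds each antichain $A_i$ below $\gamma$, and the closure $X^{<\gamma}\of X$ then absorbs first each $(A_i,\,a\mapsto\tau_{i,a})$ and finally the entire $\delta$-indexed family back into $X$. A secondary subtlety is the density of $E_i$, which must be argued through the forcing relation in $M$ rather than by passing to a generic filter over $M$, which need not exist.
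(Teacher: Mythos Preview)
The paper states this lemma as a standard fact and gives no proof, so there is nothing to compare your approach against; I can only assess correctness.

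Your argument is correct and is the standard one. One small technical point worth making explicit: your proof tacitly uses that $X\in M$ (or at least that $X$ is a definable class of $M$). You need this in two places. First, to find $p_0\in G$ forcing ``range contained in $\check X[\dot G]$'' you must be able to form a name for $X[G]$ inside $M$, which requires $M$ to have access to $X$ as a parameter. Second, each $E_i$ is defined by quantifying over $\p$-names $\tau\in X$, so $E_i\in M$ (and hence the existence of a maximal antichain $A_i\in M$) again needs $X\in M$. The lemma as stated only says $X\prec M$, not $X\in M$; but in every application in the paper---the submodels $X_{\xi+1}$ of an $\alpha$-special model are elements of that model by definition---this extra hypothesis holds, and it is the natural reading. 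With that understood, your density argument for $E_i$, the use of the $\gamma$-cc to bound each $A_i$ below $\gamma$, and the two-step absorption of first each $(A_i,\,a\mapsto\tau_{i,a})$ and then the whole $\delta$-indexed family back into $X$ via the closure hypothesis are all correct.
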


The forcing we will use to prove the main main results of this paper will be an Easton support iteration of Easton support products (see the definition of $\P^F$ at the beginning of Section~\S\ref{se:proofofmaintheorem}). To lift embeddings through this forcing we will need the following lemma.

\begin{lemma}[Easton's Lemma]\label{le:easton}
Suppose $M$ is a model of $\ZFC$ \emph{(}or $\ZFC^-$\emph{)} and $\P,\Q \in M$ are forcing notions. If $\P$ has the $\kappa^+$-cc and $\Q$ is ${\leq}\kappa$-closed in $M$, then $\Q$ remains $\lesseq\kappa$-distributive in $M^{\p}$.
\end{lemma}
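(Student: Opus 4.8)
This is a standard argument; the plan is to reduce the statement to a combinatorial fact about $\P\times\Q$ inside $M$ and then run an interleaved recursion that plays the $\kappa^+$-cc of $\P$ off against the $\leqkappa$-closure of $\Q$.

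First I would isolate the following claim, to be proved entirely in $M$: \emph{if $\dot f$ is a $\P\times\Q$-name and $(p_0,q_0)\in\P\times\Q$ forces that $\dot f$ is a function from some ordinal $\delta\leq\kappa$ into the ordinals, then there are $q_1\leq q_0$ in $\Q$ and a $\P$-name $\dot f^*$ with $(p_0,q_1)\forces_{\P\times\Q}\dot f=\dot f^*$} --- that is, below $q_1$ the value of $\dot f$ no longer depends on the $\Q$-coordinate. To see this suffices, let $G\of\P$ be $M$-generic and work in $M[G]$: given a $\Q$-name $\dot g$ and $q_*\in\Q$ with $q_*\forces_\Q\dot g\colon\check\delta\to\mathrm{Ord}$ for some $\delta\leq\kappa$, I want some $q'\leq q_*$ deciding $\dot g$ to be a member of $M[G]$. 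As that forcing statement holds in $M[G]$, some $p_0\in G$ forces (over $M$) that $\check q_*$ forces the corresponding pulled-back $\P$-name to be such a function; reading that name as a $\P\times\Q$-name $\dot f$ and applying the claim yields $q_1\leq q_*$ and a $\P$-name $\dot f^*$ with $(p_0,q_1)\forces_{\P\times\Q}\dot f=\dot f^*$, and then, back in $M[G]$, the condition $q_1$ forces that $\dot g$ equals the set $\dot f^*_G\in M[G]$. Since $q_*$ was arbitrary, the set of conditions deciding $\dot g$ to be an element of $M[G]$ is dense, which is exactly $\leqkappa$-distributivity of $\Q$ over $M[G]$; as $G$ was arbitrary, $\Q$ is $\leqkappa$-distributive in $M^\P$.

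For the claim, working in $M$, I would recursively build a descending chain $\la q_\xi\mid\xi\leq\delta\ra$ in $\Q$ below $q_0$ together with, for each $\xi<\delta$, a maximal antichain $A_\xi$ of $\P$ below $p_0$ and a labelling $g_\xi\colon A_\xi\to\mathrm{Ord}$ such that $(p,q_{\xi+1})\forces_{\P\times\Q}\dot f(\check\xi)=\check{g_\xi(p)}$ for every $p\in A_\xi$. Passing from $q_\xi$ to $q_{\xi+1}$ (and producing $A_\xi$, $g_\xi$) is itself a sub-recursion maintaining a descending chain of $\Q$-conditions below $q_\xi$ and an antichain of $\P$-conditions below $p_0$: at a successor step, if the current antichain is not yet maximal below $p_0$, choose a $\P$-condition below $p_0$ incompatible with all previously chosen ones, refine it together with the current $\Q$-condition so as to decide $\dot f(\check\xi)$, and record the decided value; at limit steps take a lower bound of the $\Q$-chain. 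Because the chosen $\P$-conditions form an antichain, the $\kappa^+$-cc forces this sub-recursion to halt before stage $\kappa^+$ with its antichain maximal below $p_0$; hence every lower bound that must be taken --- inside the sub-recursions and at the limit stages $\xi\leq\delta$ of the outer recursion --- is a lower bound of a descending sequence of length ${\leq}\kappa$ and so exists by the $\leqkappa$-closure of $\Q$. Finally I would let $q_1$ be a lower bound of $\la q_\xi\mid\xi<\delta\ra$ and let $\dot f^*$ be the $\P$-name forced to send each $\xi<\delta$ to $g_\xi(p)$, where $p$ is the unique member of $A_\xi$ in the $\P$-generic; since every generic filter containing $p_0$ meets each $A_\xi$, a one-line genericity check gives $(p_0,q_1)\forces_{\P\times\Q}\dot f=\dot f^*$.

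The step I expect to be the main obstacle is organizing this interleaved recursion correctly: one has to keep enlarging the antichain in $\P$ while simultaneously descending in $\Q$, and argue that the total length stays strictly below $\kappa^+$ using only the $\kappa^+$-cc, so that the closure hypothesis on $\Q$ --- which supplies lower bounds only for descending sequences of length ${\leq}\kappa$ --- is always applicable at limit stages. The reduction in the first paragraph --- matching $\P\times\Q$-names with iterated $\Q$-over-$M^\P$ names and arranging genuine density below an arbitrary condition, rather than merely the existence of one good condition --- is routine but also requires a little care.
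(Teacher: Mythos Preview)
The paper does not prove this lemma; it is stated as a standard fact and used as a black box. Your argument is correct and is essentially the textbook proof of Easton's Lemma: reduce $\leqkappa$-distributivity in $M^\P$ to a statement about $\P\times\Q$-names in $M$, and then for each coordinate $\xi<\delta$ build a maximal antichain in $\P$ deciding $\dot f(\check\xi)$ while simultaneously descending in $\Q$, using the $\kappa^+$-cc to bound the length of each sub-recursion and the $\leqkappa$-closure of $\Q$ to pass through limits.

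Two small remarks. First, there is a harmless notational clash: you use $q_1$ both for the member of the chain $\la q_\xi\mid\xi\leq\delta\ra$ at $\xi=1$ and for the final lower bound; since you already built the chain out to $\xi=\delta$, you can simply take the final condition to be $q_\delta$. Second, when you invoke $\leqkappa$-closure at limit stages of the sub-recursion, note that the sub-recursion may run for an ordinal of length strictly between $\kappa$ and $\kappa^+$; this is not a problem, since any such ordinal has cofinality $\leq\kappa$ and one can pass to a cofinal subsequence, but it is worth saying explicitly. Neither point affects the correctness of the argument.
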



We end this section by recalling a standard fact about forcing iterations of an inaccessible length $\kappa$.

\begin{lemma}\label{le:eastoniterations}
Suppose $\p$ is an iteration of inaccessible length $\kappa$ such
that for all $\alpha<\kappa$, $\forces_\alpha \dot{\q}_\alpha\in
\check{V}_\kappa$ and a direct limit is taken on a stationary set of
stages below $\kappa$, then $\p$ has size $\kappa$ and the $\kappa$-cc.
\end{lemma}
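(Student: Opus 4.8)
The plan is to extract both conclusions from a single structural observation: under the hypothesis that direct limits are taken on a stationary set $S$ of stages below $\kappa$, every condition $p\in\p$ has support bounded below $\kappa$, so that $\p=\Union_{\alpha<\kappa}\p_\alpha$ once $\p_\alpha$ is identified with the suborder of conditions supported below $\alpha$. Indeed, if $\text{supp}(p)$ were cofinal in $\kappa$ then (as $\kappa$ is regular and uncountable) its set of limit points would be a club, hence would meet $S$ at some $\alpha$; but then $p\restrict\alpha$ would be an element of the direct limit $\p_\alpha$ with support cofinal in $\alpha$, which is impossible. This is the one place where the stationarity, rather than mere cofinality, of the set of direct-limit stages is used, and the rest of the argument is routine bookkeeping with all cardinal arithmetic kept below $\kappa$ by inaccessibility.

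Granting the observation, the size bound goes by induction on $\alpha\leq\kappa$, showing $|\p_\alpha|<\kappa$ for $\alpha<\kappa$. At a successor $\p_{\alpha+1}=\p_\alpha\ast\dot\q_\alpha$; since $\forces_\alpha\dot\q_\alpha\in\check V_\kappa$, a mixing argument over an antichain of $\p_\alpha$ of size $<\kappa$ shows that the conditions of $\dot\q_\alpha$ can be named using fewer than $\kappa$ many $\p_\alpha$-names, so $|\p_{\alpha+1}|<\kappa$ as $\kappa$ is a strong limit. At a limit $\lambda<\kappa$, a condition of $\p_\lambda$ is a $\lambda$-sequence whose $\beta$-th entry is drawn from a set of size $<\kappa$, so $|\p_\lambda|\leq\mu^{|\lambda|}<\kappa$ for a suitable $\mu<\kappa$, again by inaccessibility. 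Then $|\p|=|\Union_{\alpha<\kappa}\p_\alpha|\leq\kappa\cdot\sup_{\alpha<\kappa}|\p_\alpha|=\kappa$, with equality since the iteration is nontrivial on a cofinal set of stages.

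For the $\kappa$-cc I would run the standard $\Delta$-system argument, which is available precisely because $\kappa$ is inaccessible. Suppose toward a contradiction that $A=\{p_\xi\mid\xi<\kappa\}$ is an antichain. By the observation each $\text{supp}(p_\xi)$ has size $<\kappa$, and since $\mu^{\ltkappa}<\kappa$ for all $\mu<\kappa$, the $\Delta$-system lemma yields $B\of\kappa$ of size $\kappa$ with $\{\text{supp}(p_\xi)\mid\xi\in B\}$ a $\Delta$-system with root $R$; set $\rho=\sup(R)+1<\kappa$. The map $\xi\mapsto p_\xi\restrict\rho$ sends $B$ into $\p_\rho$, and $|\p_\rho|<\kappa=|B|$, so $p_\xi\restrict\rho=p_\zeta\restrict\rho$ for some $\xi\neq\zeta$ in $B$. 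Because $\text{supp}(p_\xi)\cap\text{supp}(p_\zeta)=R\of\rho$, the two supports are disjoint above $\rho$, so one can amalgamate $p_\xi$ and $p_\zeta$ over their common initial segment into a single condition $s$: put $s\restrict\rho=p_\xi\restrict\rho$ and, for $\beta\geq\rho$, let $s(\beta)$ be $p_\xi(\beta)$ or $p_\zeta(\beta)$ according to which support contains $\beta$ (and $\one$ if neither), verifying by induction on $\beta$ that $s$ is a condition below both $p_\xi$ and $p_\zeta$. This contradicts that $A$ is an antichain. I expect the support-boundedness observation to be the substantive point and the inductive check that the amalgamated $s$ lies below both $p_\xi$ and $p_\zeta$ to be the fiddliest part, though both are standard.
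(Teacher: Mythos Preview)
Your argument is correct and is precisely the standard one. The paper does not actually prove this lemma: it is stated there as ``a standard fact about forcing iterations of an inaccessible length $\kappa$'' and the reader is referred to \cite{cummings:weaklycompact} for details. So there is no paper proof to compare against; your write-up would serve perfectly well as the omitted proof.

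Two minor remarks. First, the conclusion $|\p|=\kappa$ should strictly be $|\p|\leq\kappa$: the hypotheses do not exclude the possibility that every $\dot\q_\alpha$ is trivial, and your parenthetical ``since the iteration is nontrivial on a cofinal set of stages'' is an extra assumption not present in the lemma (though of course satisfied in the intended application). Second, in the amalgamation step you should note explicitly that $s$ really is a condition at direct-limit stages $\alpha\in S$: since $p_\xi,p_\zeta\in\p$, their restrictions to $\alpha$ lie in the direct limit $\p_\alpha$, hence have support bounded below $\alpha$, and therefore so does $\text{supp}(s)\cap\alpha\subseteq(\text{supp}(p_\xi)\cup\text{supp}(p_\zeta))\cap\alpha$. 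You gesture at this with ``verifying by induction on $\beta$,'' but it is worth isolating, as it is the one place where the amalgamation could conceivably fail in an iteration mixing direct and inverse limits.
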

A more detailed exposition of standard lifting techniques can be found in
\cite{cummings:weaklycompact}.

\section{Proof of Main Theorem}\label{se:proofofmaintheorem}
The main theorem will be established by proving Theorems~\ref{th:stronglyramsey} and \ref{th:ramsey} below.

In his celebrated result, Easton showed that assuming \GCH, any class function $F$ on the regular cardinals satisfying that $F(\alpha)\leq F(\beta)$ for $\alpha\leq\beta$ and $\alpha<\cf(F(\alpha))$ is realized as the continuum function in a cofinality preserving forcing extension. To construct the forcing extension, Easton used the $\ORD$-length Easton support\footnote{A product forcing is said to have \emph{Easton support} if for every regular cardinal $\delta$  and $p\in\p$, we have $|\text{dom}(p)\cap \delta|<\delta$. A forcing iteration is said to have \emph{Easton support} if direct limits are taken at inaccessible stages and inverse limits are taken elsewhere.} product $\e^F=\Pi_{\delta\in
\ORD}\Add(\delta,F(\delta))$, where $\Add(\delta,\gamma)$ denotes the poset adding $\gamma$-many Cohen subsets to $\delta$ with conditions of size less than $\delta$. Easton's result may be achieved as well by forcing with an Easton support $\ORD$-length iteration of Easton support products. Versions of this iteration, which we describe below, appeared in the arguments of Menas \cite{menas:indes}, Friedman and Honzik \cite{syfriedman:continuum}, and Cody \cite{cody:dissertation} cited in the introduction. The advantage to using an iteration of products is that the standard lifting techniques for preserving large cardinals work well with such iterations.

Let $\P^F=\langle (\P_{\eta},\dot{\Q}_{\eta}) : \eta\in\ORD \rangle$ be the Easton support $\ORD$-length iteration of Easton support products, defined as follows. For a given ordinal $\alpha$, let $\bar{\alpha}$ denote the least inaccessible closure point of $F$ greater than $\alpha$.
\begin{enumerate}
\item[$(1)$] If $\eta$ is an inaccessible closure point of $F$ in $V^{\P_{\eta}}$, then $\dot{\Q}_{\eta}$ is a $\P_{\eta}$-name for the Easton support product
$$\E^F_{[\eta,\bar{\eta})}:=\prod_{\gamma\in[\eta,\bar{\eta})\cap\REG} \Add(\gamma,F(\gamma))$$
as defined in $V^{\P_{\eta}}$ and $\P_{\eta+1}=\P_{\eta}*\dot{\Q}_{\eta}$.
\item[$(2)$] If $\eta$ is a singular closure point of $F$ in $V^{\P_{\eta}}$, then $\dot{\Q}_{\eta}$ is a $\P_{\eta}$-name for
$$\E^F_{(\eta,\bar{\eta})}:=\prod_{\gamma\in(\eta,\bar{\eta})\cap\REG} \Add(\gamma,F(\gamma))$$
as defined in $V^{\P_{\eta}}$ and $\P_{\eta+1}=\P_{\eta}*\dot{\Q}_{\eta}$.
\item[$(3)$] Otherwise, if $\eta$ is not a closure point of $F$, then $\dot{\Q}_\eta$ is a $\P_\eta$-name for trivial forcing and $\P_{\eta+1}=\P_\eta*\dot{\Q}_\eta$.
\end{enumerate}

The forcing for adding $\lambda$-many Cohen subsets to a regular cardinal, which we are denoting by $\Add(\kappa,\lambda)$, can be characterized in many ways. In what follows, we adopt the convention that $\Add(\kappa,\lambda)$ consists of all partial functions $p$ from $\lambda$ to $\Add(\kappa,1)$ such that $|p|<\kappa$.

The next lemma is established by the standard arguments of factoring $\p^F$ at the appropriate stages and counting nice names.

\begin{lemma}\label{le:continuumiteration}
If the \GCH\ holds, then the forcing extension $V[G]$ by $G\of\p^F$ is  cofinality preserving and has $2^\delta=F(\delta)$ for all regular cardinals $\delta$.
\end{lemma}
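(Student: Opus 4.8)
The plan is to carry out the classical Easton analysis: fix a regular cardinal $\delta$, factor $\P^F$ as a ``small'' part acting on cardinals $\le\delta$ followed by a highly closed part acting on cardinals $>\delta$, and count nice names for subsets of $\delta$, using $\GCH$ in $V$.

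First I would set up the factoring. For regular $\delta$, let $\mathbb L_\delta$ be the subforcing of $\P^F$ obtained by keeping, at each stage $\eta$, only the factors $\Add(\gamma,F(\gamma))$ of $\dot\Q_\eta$ with $\gamma\le\delta$, and let $\dot{\mathbb H}_\delta$ name the complementary part, keeping the factors with $\gamma>\delta$. The standard rearrangement of an Easton-support iteration of Easton-support products gives $\P^F\cong\mathbb L_\delta*\dot{\mathbb H}_\delta$ with $\dot{\mathbb H}_\delta$ forced to be ${\leq}\delta$-closed: every factor occurring in $\mathbb H_\delta$ is $\Add(\gamma,\cdot)$ for some regular $\gamma\ge\delta^+$ and hence ${\leq}\delta$-closed in any model, so $\mathbb H_\delta$ adds no subsets of $\delta$ and does not interfere with the definition of $\mathbb L_\delta$; and the redundant re-forcing of $\Add(\gamma,F(\gamma))$ at singular closure points is harmless, since under $\GCH$ a bounded iteration of copies of $\Add(\gamma,F(\gamma))$ with the appropriate support is isomorphic to $\Add(\gamma,F(\gamma))$ itself.

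Next I would establish two facts about $\mathbb L_\delta$, working under $\GCH$. It has a dense subset of size at most $F(\delta)$: every factor $\Add(\gamma,F(\gamma))$ occurring in it has size $F(\gamma)\le F(\delta)$ by monotonicity of $F$, conditions involve fewer than $\delta$ coordinates by the Easton supports, and $F(\delta)^{{<}\delta}=F(\delta)$ by $\GCH$ and $\delta<\cf(F(\delta))$. And it has the $\delta^+$-cc: this is the usual chain-condition computation for Easton forcing under $\GCH$ — Lemma~\ref{le:eastoniterations} handles the iteration up to $\delta$ when $\delta$ is inaccessible, with $\Add(\delta,F(\delta))$ being $\delta^+$-cc over that extension because $2^{{<}\delta}=\delta$ is preserved there, and the $\Delta$-system lemma, applied in $V$ where $\delta^{{<}\delta}=\delta$, handles successor $\delta$ (including $\delta=\mu^+$ with $\mu$ singular, where $\mu^\mu=\mu^+$ keeps the $\Delta$-system lemma applicable to conditions of size $\le\mu$). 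The continuum computation then falls out: $\dot{\mathbb H}_\delta$ is $\delta$-distributive over $V[\mathbb L_\delta]$, so $\Power(\delta)^{V[G]}=\Power(\delta)^{V[G\restriction\mathbb L_\delta]}$; antichains of $\mathbb L_\delta$ have size $\le\delta$, so there are at most $F(\delta)^\delta=F(\delta)$ nice $\mathbb L_\delta$-names for subsets of $\delta$, giving $2^\delta\le F(\delta)$; and $\Add(\delta,F(\delta))$ is a factor of $\mathbb L_\delta$, so $2^\delta\ge F(\delta)$, the $F(\delta)$ generic subsets it adds staying distinct and $F(\delta)$ remaining a cardinal by cofinality preservation. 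Hence $2^\delta=F(\delta)$ in $V[G]$.

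Finally, cofinality preservation comes from the same ingredients. Fixing regular $\delta$ and factoring $\P^F\cong(\mathbb L_\delta^{{<}\delta}*\Add(\delta,F(\delta)))*\dot{\mathbb H}_\delta$, where $\mathbb L_\delta^{{<}\delta}$ keeps the factors acting on cardinals strictly below $\delta$: for each regular $\gamma<\delta$ one writes $\mathbb L_\delta^{{<}\delta}\cong\mathbb L_\delta^{{\le}\gamma}\times\mathbb L_\delta^{(\gamma,\delta)}$ with $\mathbb L_\delta^{{\le}\gamma}$ having the $\gamma^+$-cc and $\mathbb L_\delta^{(\gamma,\delta)}$ being ${\leq}\gamma$-closed, so by Easton's Lemma~\ref{le:easton} $\mathbb L_\delta^{{<}\delta}$ adds no cofinal map $\gamma\to\delta$; then $\Add(\delta,F(\delta))$ is ${<}\delta$-closed over the resulting extension, $\dot{\mathbb H}_\delta$ is ${\leq}\delta$-closed, and $\mathbb L_\delta$ has the $\delta^+$-cc, so no cofinality below, at, or above $\delta$ is changed. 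As $\delta$ ranges over the regular cardinals, all cofinalities are preserved. The step I expect to require the most care is the factoring bookkeeping — rigorously rearranging the iteration of products as a product split at $\delta$, checking that the redundant blocks at singular closure points are absorbed, and verifying the chain-condition and distributivity arguments in the inaccessible and successor-of-singular cases — but this is exactly the ``standard'' work referred to in the statement, and is routine given $\GCH$, Lemma~\ref{le:eastoniterations}, and Easton's Lemma~\ref{le:easton}.
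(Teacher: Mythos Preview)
Your proposal is correct and matches the paper's approach: the paper gives no detailed proof, stating only that the lemma ``is established by the standard arguments of factoring $\P^F$ at the appropriate stages and counting nice names,'' which is precisely what you carry out. One small correction: there is no ``redundant re-forcing of $\Add(\gamma,F(\gamma))$ at singular closure points'' --- by the definition of $\P^F$, each regular $\gamma$ lies in exactly one interval $[\eta,\bar\eta)$ or $(\eta,\bar\eta)$ and is handled exactly once, so that remark and the absorption argument following it can simply be dropped.
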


We will start by proving the main theorem for weakly compact cardinals as a warm-up. In this case the proof is relatively straightforward and it will allow us to anticipate the obstacles that must be overcome to carry out the argument for Ramsey and strongly Ramsey cardinals. The weaker result that the $\GCH$ may fail for the first time at a weakly compact cardinal appears in \cite{cummings:weaklycompact} and the full result is part of the set theoretic folklore. Recall, from Section~\S\ref{sec:ramsey}, that if $\kappa$ is weakly compact, then every $A\of\kappa$ is contained in a $\kappa$-model $M$ for which there is an elementary embedding $j:M\to N$ with critical point $\kappa$, and to demonstrate that $\kappa$ is weakly compact, it suffices that $2^{\ltkappa}=\kappa$ and every $A\subseteq\kappa$ is contained in such a weak $\kappa$-model. First, it is useful to make the following easy observation.
\begin{remark}
If $\kappa$ is a weakly compact, Ramsey, or strongly Ramsey cardinal, then this is preserved to forcing extensions by $\lesseq\kappa$-distributive forcing.
\end{remark}
\begin{theorem}\label{th:weaklycompact}
Assuming \GCH, if $\kappa$ is weakly compact and $F$ is a class function on the regular cardinals having a closure point at $\kappa$ and satisfying $F(\alpha)\leq F(\beta)$ for $\alpha\leq\beta$ and $\alpha<\cf(F(\alpha))$, then there is a confinality preserving forcing extension in which $\kappa$ remains weakly compact and $F$ is realized as the continuum function, that is $2^\delta=F(\delta)$ for every regular cardinal $\delta$.
\end{theorem}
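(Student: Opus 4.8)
The plan is to force with the Easton-support iteration $\P^F$ defined above. By Lemma~\ref{le:continuumiteration} this forcing is cofinality preserving and realizes $F$ as the continuum function, so everything reduces to checking that $\kappa$ stays weakly compact in $V[G]$ for $G\of\P^F$. Factor $\P^F=\P_\kappa*\dot\Q_\kappa*\dot\P_{\tail}$ at stage $\kappa$ (an inaccessible closure point of $F$) and at the next inaccessible closure point $\bar\kappa$, and write $G=G_\kappa*g_\kappa*G_{\tail}$ accordingly. The usual counting arguments give: $\P_\kappa$ has size $\kappa$ and the $\kappa$-cc by Lemma~\ref{le:eastoniterations}; \GCH\ below $\kappa$ is preserved so that $2^{\ltkappa}=\kappa$ in $V[G_\kappa]$; in $V[G_\kappa]$ the stage-$\kappa$ poset factors as $\Q_\kappa=\Add(\kappa,F(\kappa))\times\E^F_{(\kappa,\bar\kappa)}$ with $\Add(\kappa,F(\kappa))$ being $\ltkappa$-closed and $\kappa^+$-cc and $\E^F_{(\kappa,\bar\kappa)}$ being $\lesseq\kappa$-closed; and $\P_{\tail}$, forcing only at stages $\ge\bar\kappa$, is $\lesseq\kappa$-closed.

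The first step is to reduce everything to $\Add(\kappa,F(\kappa))$. Since $\E^F_{(\kappa,\bar\kappa)}*\dot\P_{\tail}$ is $\lesseq\kappa$-closed over $V[G_\kappa]$ while $\Add(\kappa,F(\kappa))$ has the $\kappa^+$-cc, Easton's Lemma (Lemma~\ref{le:easton}) shows this tail stays $\lesseq\kappa$-distributive over $V[G_\kappa][g^0_\kappa]$, where $g^0_\kappa\of\Add(\kappa,F(\kappa))$ is the first coordinate of $g_\kappa$, and so it adds no subsets of $\kappa$; thus $H_{\kappa^+}^{V[G]}=H_{\kappa^+}^{V[G_\kappa][g^0_\kappa]}$. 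Since ``$\kappa$ is weakly compact'' is a statement about $H_{\kappa^+}$ — the witnessing models and embeddings all have size $\kappa$ — and since $2^{\ltkappa}=\kappa$ and the inaccessibility of $\kappa$ persist because $\Add(\kappa,F(\kappa))$ is $\ltkappa$-distributive, it suffices to show $\kappa$ is weakly compact in $V[G_\kappa][g^0_\kappa]$. For a given $A\of\kappa$ there, a nice $\Add(\kappa,F(\kappa))^{V[G_\kappa]}$-name for $A$ mentions at most $\kappa$ of the coordinates (by $\kappa^+$-cc), so after relabeling columns $A$ already belongs to $V[G_\kappa][g]$ for some $V[G_\kappa]$-generic $g\of\Add(\kappa,\kappa)^{V[G_\kappa]}$; and $\P_\kappa*\dot{\Add}(\kappa,\kappa)$ has size $\kappa$.

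The second step is a two-step lift. In $V$, fix a weak $\kappa$-model $M$ with $M^{\ltkappa}\of M$ and $V_\kappa\of M$ containing $\P_\kappa$, a $\P_\kappa$-name for $\Add(\kappa,\kappa)^{V[G_\kappa]}$, and a $(\P_\kappa*\dot{\Add}(\kappa,\kappa))$-name $\dot A$ for $A$; by the tree-property characterization of weak compactness, fix an elementary $j:M\to N$ with critical point $\kappa$ that is the ultrapower of $M$ by a countably complete normal $M$-ultrafilter, so $N$ is a $\kappa$-model by Remarks~\ref{rem:ultrapowers}~(2) and $j,N\in V$. First lift $j$ through $\P_\kappa$: as $\P_\kappa\of V_\kappa$, $j$ fixes every condition of $\P_\kappa$, so $j\image G_\kappa=G_\kappa$, and by the Lifting Criterion (Lemma~\ref{le:liftingcriterion}) it suffices to extend $G_\kappa$ to an $N$-generic filter $\hat G$ for $j(\P_\kappa)=\P_\kappa*\dot\P_{[\kappa,j(\kappa))}$; in $N[G_\kappa]$ the poset $\dot\P_{[\kappa,j(\kappa))}$ is $\ltkappa$-closed (its first nontrivial factor is $\Add(\kappa,j(F)(\kappa))$, later stages more closed), and $N[G_\kappa]$ is a $\kappa$-model with at most $\kappa$ antichains of it by Lemma~\ref{le:genericclosure}, so the Diagonalization Criterion (Lemma~\ref{le:diag(1)}) supplies such a $\hat G$ inside $V[G_\kappa]$, giving an ultrapower lift $j:M[G_\kappa]\to N[\hat G]$ with $N[\hat G]$ a $\kappa$-model in $V[G_\kappa]$. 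Then lift $j$ through $\Add(\kappa,\kappa)$: since $j$ fixes the conditions of $\Add(\kappa,\kappa)^{M[G_\kappa]}=\Add(\kappa,\kappa)^{V[G_\kappa]}$, we have $j\image g=g$, and we need an $N[\hat G]$-generic filter $g^*$ for $j(\Add(\kappa,\kappa)^{M[G_\kappa]})=\Add(j(\kappa),j(\kappa))^{N[\hat G]}$ containing $g$. One builds $g^*$ by a recursion of length $\kappa$ carried out in $V[G_\kappa][g]$, meeting the dense subsets of $\Add(j(\kappa),j(\kappa))^{N[\hat G]}$ that lie in $N[\hat G]$ while keeping the conditions compatible with $g$, and using that $N[\hat G]$ remains $\ltkappa$-closed after the $\ltkappa$-distributive forcing $\Add(\kappa,\kappa)$. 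This yields a lift $j:M[G_\kappa][g]\to N[\hat G][g^*]$, lying in $V[G_\kappa][g]$, with critical point $\kappa$ and with $A=\dot A_{G_\kappa*g}$ in its domain — a transitive model of $\ZFC^-$ of size $\kappa$. As $A$ was arbitrary, $\kappa$ is weakly compact in $V[G_\kappa][g^0_\kappa]$, hence in $V[G]$.

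The hard part is the final lifting step: building, over the target $\kappa$-model $N[\hat G]$, a generic filter for the image poset $\Add(j(\kappa),j(\kappa))^{N[\hat G]}$ that contains the already fixed generic $g$ for $\Add(\kappa,\kappa)$. The subtlety is that the obvious master condition $\bigcup g$ does not lie in $N[\hat G]$ — in fact $\Add(\kappa,\kappa)$ is not a regular subposet of $\Add(j(\kappa),j(\kappa))^{N[\hat G]}$ — so one cannot simply diagonalize below it; instead one has to interleave the $N[\hat G]$-genericity demands with the compatibility-with-$g$ demands, exploiting the genericity of $g$ over $V[G_\kappa]\supseteq N[\hat G]$ together with the $\ltkappa$-closure of $N[\hat G]$. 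This is the one place where the closure of the target model is genuinely used, and it is exactly what must be reengineered when the target model is merely $\omega$-special or $\kappa$-special: for Ramsey cardinals Diagonalization Criterion~(2) (Lemma~\ref{le:diag(2)}) will play the role of Lemma~\ref{le:diag(1)}, while for strongly Ramsey cardinals one works with $\kappa$-special $\kappa$-models as in Lemmas~\ref{le:ramseycharacterization} and~\ref{le:strongramseycharacterization}.
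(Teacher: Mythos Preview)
Your second lifting step has a genuine gap that cannot be repaired by the interleaving argument you sketch. You build $\hat G$ by pure diagonalization in $V[G_\kappa]$, so $N[\hat G]\subseteq V[G_\kappa]$ and in particular the Cohen subset $(\bigcup g)(0)\in 2^\kappa$ does \emph{not} lie in $N[\hat G]$. But the set
\[
D=\{q\in\Add(j(\kappa),j(\kappa))^{N[\hat G]}: 0\in\dom(q)\text{ and }\kappa\subseteq\dom(q(0))\}
\]
is dense in $N[\hat G]$ (extend $q(0)$ by zeros; this has size $\kappa<j(\kappa)$ there). Any $q\in D$ that is compatible with every condition of $g$ must satisfy $q(0)\restrict\kappa=(\bigcup g)(0)$, which would put $(\bigcup g)(0)$ into $N[\hat G]$---a contradiction. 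Hence there is \emph{no} $N[\hat G]$-generic $g^*$ for $\Add(j(\kappa),j(\kappa))^{N[\hat G]}$ with $j\image g=g\subseteq g^*$, and no amount of interleaving or exploiting the genericity of $g$ can manufacture one.

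The paper avoids this obstruction not by a cleverer diagonalization but by arranging the \emph{first} lift so that the master condition is available for the second. It reduces to $\Add(\kappa,1)$ rather than $\Add(\kappa,\kappa)$, and when building the $N[G]$-generic for the $\Add(\kappa,j(f)(\kappa))^{N[G]}$-factor of $j(\p_\kappa^f)$ it uses $g$ itself on the first coordinate (and a further coordinate of $K$ on the rest), so that $g\in N[j(G)]$. Then $\bigcup g$ \emph{is} a bona fide condition of $\Add(j(\kappa),1)^{N[j(G)]}$ and ordinary diagonalization below it suffices. Your diagnosis that ``the master condition is unavailable'' is thus a self-inflicted difficulty: it arises precisely because you diagonalized the entire tail of $j(\p_\kappa^f)$ in $V[G_\kappa]$ instead of threading $g$ into it. The genuine reengineering needed for Ramsey and strongly Ramsey cardinals concerns $\kappa$-powerset preservation of the lift, not the issue you highlight here.
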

\begin{proof}
We will argue that $\kappa$ remains weakly compact after forcing with the iteration $\p^F$. Observe that we may factor $\P^F$ as:
\begin{align*}
\p^F&\cong \p_\kappa^F*\dot\E^F_{[\kappa,\bar{\kappa})}*\dot\p_\tail^F \\
	&\cong \p_\kappa^F*\left(\dot\Add(\kappa,F(\kappa))\times\dot\E^F_{(\kappa,\bar{\kappa})}\right)*\dot\p^F_\tail.
\end{align*}
It follows from Lemma \ref{le:easton} that $\dot\E^F_{(\kappa,\bar{\kappa})}*\dot\P^F_\tail$ is forced to be $\leqkappa$-distributive by $\P^F_\kappa*\Add(\kappa,F(\kappa))$. Since $\leqkappa$-distributive forcing preserves the weak compactness of $\kappa$, it suffices to prove that $\kappa$ remains weakly compact after forcing with $\p_\kappa^F*\dot\Add(\kappa,F(\kappa))$. Towards this end, we let $G*K\subseteq\p_\kappa^F*\dot\Add(\kappa,F(\kappa))$ be $V$-generic and consider the extension $V[G][K]$. Standard arguments show that $2^{\ltkappa}=\kappa$ in $V[G][K]$. We will show that every $A\of\kappa$ in $V[G][K]$ is contained there in a weak $\kappa$-model for which there exists an elementary embedding with critical point $\kappa$.



We fix $A\of\kappa$ in $V[G][K]$ and choose a nice
$\p_\kappa^F*\dot\Add(\kappa,F(\kappa))$-name $\dot A$ in $V$ such that $(\dot A)_{G*K}=A$. We would like to be able to put $\dot A$ into a $\kappa$-model $M$ in $V$ for which there is an elementary embedding $j:M\to N$ with critical point $\kappa$ and lift $j$ to $M[G][K]$ in $V[G][K]$. However, the name $\dot A$ and the poset $\p_\kappa^F*\dot\Add(\kappa,F(\kappa))$ are both too large to fit into a model of size $\kappa$. Thus, our strategy requires a slight refinement. Since $\Add(\kappa,F(\kappa))$ has the $\kappa^+$-cc, it follows that at most $\kappa$-many ordinals below $F(\kappa)$ can appear in the domains of conditions in $\dot A$ and hence, using an automorphism of $\Add(\kappa,F(\kappa))$, we may assume that all the conditions in $\dot A$ appear on the first coordinate of $\Add(\kappa,F(\kappa))$. Thus, we may assume that $\dot A$ is a nice
$\p_\kappa^F*\dot\Add(\kappa,1)$-name and $(\dot A)_{G*g}=A$ where $g$ is the $V[G]$-generic for $\Add(\kappa,1)$ obtained from the first coordinate of $K$. Now we  fix a $\kappa$-model $M$ containing $\dot A$ and $f=F\restrict\kappa$ for which there is an elementary embedding $j:M\to N$ with
critical point $\kappa$. We may assume without loss of generality that $j$ is the ultrapower by an $M$-ultrafilter on $\kappa$ and therefore, by Remark \ref{th:ramseyult}(2), $N$ is a $\kappa$-model as well. Since $M$ is a $\kappa$-model, $V_\kappa\in M$ and so $\p_\kappa^F*\Add(\kappa,1)=\p_\kappa^f*\Add(\kappa,1)\in M$ as it is definable over $V_\kappa$ using $f$. We shall lift $j$ to $M[G][g]$ in $V[G][K]$, thus witnessing that $A$ can be put into a weak $\kappa$-model with an elementary embedding with critical point $\kappa$.

First, we lift $j$ to $M[G]$ in $V[G][K]$. Using the lifting criterion (Lemma \ref{le:liftingcriterion}), it suffices to find an $N$-generic filter for the poset $j(\p_\kappa^f)$ containing $j\image G$ as a subset. By elementarity, the poset $j(\p_\kappa^f)$ is the $N$-version of the iteration $\p^{j(f)}_{j(\kappa)}$. As $j(f)\restrict\kappa=f$ and $N$ is correct about $\p^f_\kappa$ by virtue of having $V_\kappa$ as an element, we may factor $j(\P^f_\kappa)$ in $N$ as follows. We have
\begin{align}
j(\P^f_\kappa)	&\cong \p_\kappa^f*\left(\dot{\Add}(\kappa,j(f)(\kappa))\times\dot{\tilde\E}^{j(f)}_{(\kappa,\bar{\kappa}^N)}\right)*\dot{\tilde\p}_\tail^{j(f)} \label{eqn:factorization}
\end{align}
where (1) $\bar{\kappa}^N$ denotes the least closure point of $j(f)$ greater than $\kappa$, (2) $\dot{\tilde\E}^{j(f)}_{(\kappa,\bar{\kappa}^N)}$ is a $\P^f_\kappa$-name for the Easton support product ${\tilde\E}^{j(f)}_{(\kappa,\bar{\kappa}^N)}$ as defined in $N^{\P^f_\kappa}$, and (3) $\dot{\tilde\P}^{j(f)}_\tail$ is a name for the tail of the iteration $j(\P^f_\kappa)$. Note that $\p^f_\kappa$ has the $\kappa$-cc by Lemma~\ref{le:eastoniterations}.

We use $G$ as the $N$-generic filter for the $\p^f_\kappa$ part of the forcing to satisfy the lifting criterion. We will build $N[G]$-generic filters, $\tilde{H}_\kappa$ and $\tilde{H}_{(\kappa,\bar{\kappa}^N)}$, for $\Add(\kappa,j(f)(\kappa))^{N[G]}$ and ${\tilde \E}^{j(f)}_{(\kappa,\bar{\kappa}^N)}=(\dot{\tilde \E}^{j(f)}_{(\kappa,\bar{\kappa}^N)})_G$ respectively in $V[G][K]$. Since\break $\Add(\kappa,j(f)(\kappa))^{N[G]}$ has the $\kappa^+$-cc and ${\tilde \E}^{j(f)}_{(\kappa,\bar{\kappa}^N)}$ is ${\leq}\kappa$-closed in $N[G]$ it will then follow that $\tilde{H}_\kappa$ and $\tilde{H}_{(\kappa,\bar{\kappa}^N)}$ are mutually generic. The most subtle part of the argument involves making sure that $\tilde{H}_\kappa$ includes $g$ as a factor, so that $g$ may be used as a master condition in the second stage of the lift. Note that the forcing extension $N[G]$ is a $\kappa$-model in $V[G]$ by the generic closure criterion (Lemma \ref{le:genericclosure}) as $\p^f_\kappa$ has the $\kappa$-cc. It follows that $\Add(\kappa,1)^{N[G]}=\Add(\kappa,1)^{V[G]}$  and $\Add(\kappa,j(f)(\kappa))^{N[G]}$ is isomorphic to $\Add(\kappa,1)^{V[G]}$. To ensure that $g$ is a factor of $\tilde{H}_{(\kappa,\bar{\kappa}^N)}$, we will use it on the first coordinate of $\Add(\kappa,j(f)(\kappa))^{N[G]}$. We obtain a $V[G][g]$-generic filter $h$ for the remaining coordinates from the $V[G][g]$-generic filter for $\Add(\kappa,1)$ on the second coordinate of $K$. This defines $\tilde{H}_\kappa$, an $N[G]$-generic for $\Add(\kappa,j(f)(\kappa))^{N[G]}$. We build the $N[G]$-generic filter $\tilde{H}_{(\kappa,\bar{\kappa}^N)}$ for $\tilde{\E}^{j(f)}_{(\kappa,\bar{\kappa}^N)}$ using the diagonalization criterion (Lemma~\ref{le:diag(1)}) in $V[G]$. As mentioned above, $\tilde{H}_\kappa$ and $\tilde{H}_{(\kappa,\bar{\kappa})}$ are mutually generic and hence $H:=\tilde{H}_\kappa\times\tilde{H}_{(\kappa,\bar{\kappa})}$ is $N[G]$-generic for $\dot{\Add}(\kappa,j(f)(\kappa))\times\dot{\tilde\E}^{j(f)}_{(\kappa,\bar{\kappa}^N)}$.


We already observed that $N[G]$ is a $\kappa$-model in $V[G]$, and it remains so in $V[G][K]$ by the closure of $\Add(\kappa,F(\kappa))^{N[G]}$. It follows that $N[G][H]$ is a $\kappa$-model in $V[G][K]$ by  the ground closure criterion (Lemma \ref{le:groundclosure}). Thus, we use the diagonalization criterion to build an $N[G][H]$-generic filter $\tilde{G}_\tail$ for $\tilde\p_\tail^{j(f)}:=(\dot{\tilde\P}^{j(f)}_{\tail})_{G*H}$. We are now able to lift $j$ to $j:M[G]\to N[j(G)]$ with $j(G)=G*H*\tilde G_\tail$ in $V[G][K]$.

It remains to lift $j$ to $M[G][g]$, for which it suffices, using the lifting criterion,  to construct,  in $V[G][K]$, an $N[j(G)]$-generic filter for the poset $j(\Add(\kappa,1))=\Add(j(\kappa),1)^{N[j(G)]}$ containing $j\image g=g$. Since $N[j(G)]$ is a $\kappa$-model in $V[G][K]$ by the ground closure criterion, we  use the diagonalization criterion to construct an $N[j(G)]$-generic filter $g^*$ for $\Add(j(\kappa),1)^{N[j(G)]}$. To ensure that $g\of g^*$, we carry out the diagonalization below the condition $\Union g\in \Add(j(\kappa),1)^{N[j(G)]}$. We are now able to lift $j$ to $j:M[G][g]\to N[j(G*g)]$ with $j(G*g)=G*H*\tilde G_\tail*g^*$, thus completing the argument.
\end{proof}
Notice that the lift $j:M[G][g]\to N[j(G*g)]$ is clearly not $\kappa$-powerset preserving since the $N[G][g]$-generic filter $h$, which is a factor of $H$, adds new subsets of $\kappa$ not in $M[G][g]$. This was the chief obstacle that had to be overcome to modify the argument for strongly Ramsey and Ramsey cardinals. In these later arguments, we will force over $M[G]$ with $\Add(\kappa,\kappa^+)^{M[G]}$ instead of $\Add(\kappa,1)$. As we shall see below, this will allow us include the necessary master conditions for the second stage of the lift in the $N[G]$-generic filter for $\Add(\kappa,j(f)(\kappa))^{N[G]}$ without introducing subsets of $\kappa$ outside of $M[G][g]$. For $\Add(\kappa,\kappa^+)$ to make sense in $M[G]$, we need to work with models of full $\ZFC$. The strategy was motivated by  Levinski, who used the iteration $\p_\kappa*\Add(\kappa,\kappa^+)$, where $\p_\kappa$ forced with $\Add(\gamma,\gamma^{++})$ at successor stages $\gamma$, to produce a model in which the \GCH\ fails for the first time at a measurable $\kappa$ \cite{levinski:gch}.

In our analysis of the posets involved in the next two theorems we will use the following.
\begin{remark}\label{re:ordinal}
If $\kappa\subseteq X\prec M\models\ZFC^-$, then $X\cap(\kappa^+)^M$ is an ordinal. To demonstrate this, let us suppose $\alpha\in X\cap (\kappa^+)^M$ and argue that if $\beta<\alpha$ then $\beta\in X$. In $M$, there is a bijection from $\kappa$ to $\alpha$. By elementarity, fix $f\in X$ such that $X\models$ ``$f:\kappa\to\alpha$ is a bijection.'' Again by elementarity, $M\models$ ``$f:\kappa\to\alpha$ is a bijection'' and hence there is a $\gamma<\kappa$ such that $M\models$ $f(\gamma)=\beta$. Now, since $\kappa\subseteq X$ it easily follows that $X\models$ ``$\gamma\in\dom(f)$'' and hence $\beta=f(\gamma)\in X$.
\end{remark}

We are now ready to prove the main theorem, which we split here into two theorems, one for strongly Ramsey cardinals and one for Ramsey cardinals. It is worth noting that \GCH\ can be forced over any universe containing a Ramsey or a strongly Ramsey cardinal without destroying these large cardinals \cite{gitman:ramseyindes}.

\begin{theorem}\label{th:stronglyramsey}
Assuming \GCH, if $\kappa$ is a strongly Ramsey cardinal and $F$ is a class function on the regular cardinals having a closure point at $\kappa$ and satisfying $F(\alpha)\leq F(\beta)$ for $\alpha\leq\beta$ and $\alpha<\text{cf}(F(\alpha))$, then there is a confinality preserving forcing extension in which $\kappa$ remains strongly Ramsey and $F$ is realized as the continuum function, that is $2^\delta=F(\delta)$ for every regular cardinal $\delta$.
\end{theorem}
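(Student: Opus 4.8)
The plan is to force with the iteration $\P^F$ and argue that $\kappa$ remains strongly Ramsey in the extension $V[G]$. By Lemma~\ref{le:continuumiteration}, $V[G]$ is cofinality preserving and satisfies $2^\delta=F(\delta)$ for all regular $\delta$, so only the preservation of strong Ramseyness needs to be checked. Exactly as in the proof of Theorem~\ref{th:weaklycompact}, we factor
\begin{align*}
\P^F&\cong \P_\kappa^F*\dot\E^F_{[\kappa,\bar\kappa)}*\dot\P_\tail^F\\
	&\cong \P_\kappa^F*\left(\dot\Add(\kappa,F(\kappa))\times\dot\E^F_{(\kappa,\bar\kappa)}\right)*\dot\P^F_\tail,
\end{align*}
and by Easton's Lemma (Lemma~\ref{le:easton}), the tail $\dot\E^F_{(\kappa,\bar\kappa)}*\dot\P^F_\tail$ is forced to be $\lesseq\kappa$-distributive by $\P^F_\kappa*\Add(\kappa,F(\kappa))$; since $\lesseq\kappa$-distributive forcing preserves strong Ramseyness, it suffices to prove $\kappa$ remains strongly Ramsey after forcing with $\P_\kappa^F*\dot\Add(\kappa,F(\kappa))$. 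So we let $G*K\subseteq\P_\kappa^F*\dot\Add(\kappa,F(\kappa))$ be $V$-generic and show every $A\of\kappa$ in $V[G][K]$ lies in a $\kappa$-model there carrying a weakly amenable $M$-ultrafilter (equivalently, a $\kappa$-powerset preserving embedding).

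Fix $A\of\kappa$ in $V[G][K]$. As before, using the $\kappa^+$-cc of $\Add(\kappa,F(\kappa))$ and an automorphism, I would arrange a nice $\P_\kappa^F*\dot\Add(\kappa,1)$-name $\dot A$ with $(\dot A)_{G*g}=A$, where $g$ is read off the first coordinate of $K$. Now, crucially following the strategy flagged after Theorem~\ref{th:weaklycompact}, I apply Lemma~\ref{le:strongramseycharacterization} to obtain a $\kappa$-special $\kappa$-model $M\models\ZFC$ containing $\dot A$ and $f=F\restrict\kappa$, with a weakly amenable $M$-ultrafilter $U$ whose ultrapower map $j:M\to N$ satisfies $M=V_{j(\kappa)}^N$ (so $M\in N$ and $N$ is a $\kappa$-model by Remarks~\ref{rem:ultrapowers}(2)). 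Since $M\models\ZFC$ and $\kappa\in M$, the poset $\Add(\kappa,\kappa^+)^{M[G]}$ makes sense; I will force over $M[G]$ with $\Add(\kappa,\kappa^+)^{M[G]}$ rather than with $\Add(\kappa,1)$. The point is that $g$, and more generally all the subsets of $\kappa$ added by $\Add(\kappa,\kappa^+)^{M[G]}$, sit inside a single bookkeeping object that can later be absorbed into the target's generic without adding subsets of $\kappa$ outside $M[G][g']$, where $g'\subseteq\Add(\kappa,\kappa^+)^{M[G]}$ is the generic I build with $g$ as its first coordinate (using the $\kappa^+$-cc to see $A\in M[G][g']$).

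The lifting proceeds in three stages, as in Theorem~\ref{th:weaklycompact} but with the $\Add(\kappa,\kappa^+)$ refinement. First lift $j$ through $\P_\kappa^f$: factor $j(\P_\kappa^f)\cong\P_\kappa^f*(\dot\Add(\kappa,j(f)(\kappa))\times\dot{\tilde\E}^{j(f)}_{(\kappa,\bar\kappa^N)})*\dot{\tilde\P}_\tail^{j(f)}$ in $N$, use $G$ itself on the $\P_\kappa^f$ coordinate (it is $N$-generic since $\P_\kappa^f$ has the $\kappa$-cc by Lemma~\ref{le:eastoniterations}), build the $N[G]$-generic $\tilde H_\kappa$ for $\Add(\kappa,j(f)(\kappa))^{N[G]}$ so that its restriction to the first $\kappa^+$ coordinates \emph{is} $g'$ (possible because $N[G]$ is a $\kappa$-model in $V[G]$ by the generic closure criterion, so $\Add(\kappa,\kappa^+)^{N[G]}=\Add(\kappa,\kappa^+)^{V[G]}\supseteq\Add(\kappa,\kappa^+)^{M[G]}$; the remaining coordinates are filled by the second coordinate of $K$ via the diagonalization criterion), and build $\tilde H_{(\kappa,\bar\kappa^N)}$ and then the tail-generic $\tilde G_\tail$ by the diagonalization criterion (Lemma~\ref{le:diag(1)}) over the relevant $\kappa$-models, obtained via the ground closure criterion. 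This yields a lift $j:M[G]\to N[j(G)]$ in $V[G][K]$. Second, lift through $\Add(\kappa,\kappa^+)^{M[G]}$: here $j\image g'$ has as its union a condition (essentially the surjection whose pieces are the columns of $g'$) in $j(\Add(\kappa,\kappa^+))^{N[j(G)]}=\Add(j(\kappa),j(\kappa^+))^{N[j(G)]}$ that can serve as a master condition — and this is where using $\Add(\kappa,\kappa^+)$ pays off: the lift will be $\kappa$-powerset preserving because the generic $g'^*$ I build below that master condition, being $\lesseq\kappa$-closed forcing over $N[j(G)]$, adds no new subsets of $\kappa$, and $N[j(G)][g'^*]$ still computes $V_{j(\kappa)}$ as $M[G][g']$. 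Along the way I must verify that the target models remain $\kappa$-models in $V[G][K]$ after each forcing (ground/generic closure criteria plus the closure of $\Add(\kappa,F(\kappa))^{N[G]}$) and, using Lemma~\ref{le:specialult} together with Definition~\ref{def:special}, that $M[G][g']$ is again $\kappa$-special so that the relevant closure and diagonalization arguments apply at every stage; the resulting lift $j:M[G][g']\to N[j(G*g')]$ is then an ultrapower by a weakly amenable $M[G][g']$-ultrafilter (by Remarks~\ref{rem:ultrapowers}(1)) with $M[G][g']=V_{j(\kappa)}^{N[j(G*g')]}$, witnessing strong Ramseyness since $A\in M[G][g']$. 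The main obstacle is precisely the second stage: arranging that the master-condition generic over the target adds \emph{no} new subsets of $\kappa$, so that the lifted embedding is $\kappa$-powerset preserving — this is exactly the point at which the weakly compact argument fails and is the reason for passing to $\Add(\kappa,\kappa^+)$ and to models of full $\ZFC$.
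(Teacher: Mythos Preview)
Your high-level strategy is right — force with $\Add(\kappa,\kappa^+)^{M[G]}$ over a $\ZFC$-model $M$ obtained from Lemma~\ref{le:strongramseycharacterization} — but the two key steps in the middle do not go through as stated, and both failures stem from not using the $\kappa$-special structure.

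First, your construction of $\tilde H_\kappa$ breaks $\kappa$-powerset preservation. You propose to place $g'$ on the first $(\kappa^+)^{M[G]}$ coordinates of $\Add(\kappa,j(f)(\kappa))^{N[G]}$ and fill the remaining coordinates with further pieces of $K$ (or by diagonalization). But those remaining coordinates add Cohen subsets of $\kappa$ that are generic over $M[G][g']$ and hence are not elements of $M[G][g']$. Since $\tilde H_\kappa$ sits inside $j(G)$, these subsets lie in $N[j(G)]$; the subsequent $\lesseq\kappa$-closed forcing cannot remove them, so the final lift is \emph{not} $\kappa$-powerset preserving. The paper's fix is to avoid any ``extra'' coordinates altogether: using the witnessing chains $\la X_\xi\mid\xi<\kappa\ra$ and $\la Y_\xi\mid\xi<\kappa\ra$, one partitions $\Add(\kappa,\kappa^+)^{M[G]}$ and $\Add(\kappa,j(f)(\kappa))^{N[G]}$ each into $\kappa$ pieces $\q_\xi$, $\rr_\xi$, each isomorphic in the respective model to $\Add(\kappa,1)$, and assembles from the piecewise isomorphisms $\varphi_\xi,\psi_\xi$ a global isomorphism between the two posets. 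Then $\tilde H_\kappa$ is simply the image of $H$ under this isomorphism, and every subset of $\kappa$ in $N[G][\tilde H_\kappa]$ pulls back (via finitely many $\varphi_\xi,\psi_\xi$, which are in $N[G]$) to one in $M[G][H]$.

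Second, your single master condition $\bigcup j\image g'$ is not available: to form it in $N[j(G)]$ you would need $j\restrict(\kappa^+)^{M[G]}\in N[j(G)]$, but $(\kappa^+)^{M[G]}$ has size $\kappa^+$ in $M[G]$, so the usual argument (fix a bijection $\sigma:\kappa\to D$ in $M[G]$ and read off $j\restrict D$ from $j(\sigma)$) does not apply. The paper instead uses a $\kappa$-sequence of increasingly strong master conditions $q_\xi=\bigcup j\image(H\cap\overline X_\xi)$: each $\overline X_\xi$ has size $\kappa$ in $M[G]$, so $j\restrict\overline X_\xi\in N[j(G)]$, and each $H\cap\overline X_\xi$ is recoverable in $N[j(G)]$ from $\tilde H_\kappa$ together with the initial segments $\la\varphi_\eta\mid\eta<\xi\ra,\la\psi_\eta\mid\eta<\xi\ra$. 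The $N[j(G)]$-generic for $\Add(j(\kappa),j(\kappa)^+)^{N[j(G)]}$ is then built by a diagonalization that interleaves meeting antichains with descending below the $q_\xi$. This is precisely where the $\kappa$-special hypothesis does its work; invoking Lemma~\ref{le:specialult} only to say ``the relevant closure arguments apply'' misses its real purpose.
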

\begin{proof}
As in the proof of Theorem~\ref{th:weaklycompact}, it suffices to argue that $\kappa$ remains strongly Ramsey after forcing with $\p^F_\kappa*\dot\Add(\kappa,F(\kappa))$. Towards this end, let $G*K\subseteq\p_\kappa^F*\dot\Add(\kappa,F(\kappa))$ be $V$-generic, and consider the extension $V[G][K]$. We need to show that every $A\of\kappa$ in $V[G][K]$ is contained there in a $\kappa$-model for which there is a $\kappa$-powerset preserving elementary embedding.

We fix $A\of\kappa$ in $V[G][K]$ and choose a nice
$\p_\kappa^F*\dot\Add(\kappa,F(\kappa))$-name $\dot A$ in $V$ such that $(\dot A)_{G*K}=A$. As in the proof of Theorem \ref{th:weaklycompact}, we assume that $\dot A$ is a nice
$\p_\kappa^F*\dot\Add(\kappa,1)$-name and $(\dot A)_{G*g}=A$, where $g$ is the $V[G]$-generic filter for $\Add(\kappa,1)^{V[G]}$ obtained from the first coordinate of $K$. Using Lemma \ref{le:strongramseycharacterization}, we fix a $\kappa$-special $\kappa$-model $M$ containing $\dot
A$ and $f=F\restrict\kappa$ with the $\kappa$-powerset preserving ultrapower map $j:M\to N$ having $M=V_{j(\kappa)}^N\in N$. Also, we fix a sequence $\la X_\xi\mid\xi<\kappa\ra$ witnessing that $M$ is $\kappa$-special. By throwing away an initial segment of the sequence if necessary, we may assume without loss of generality that $\kappa,f\in X_0$.  Using Lemma \ref{le:specialult}, we likewise fix a sequence $\la Y_\xi\mid\xi<\kappa\ra$ witnessing that $N$ is $\kappa$-special with $\kappa,f\in Y_0$.  Unlike the proof of Theorem \ref{th:weaklycompact}, where we forced over $M$ with the iteration $\p^f_\kappa*\dot\Add(\kappa,1)$, we will force here with $(\p^f_\kappa*\dot\Add(\kappa,\kappa^+))^M$. We shall construct an $M[G]$-generic filter $H$ for the poset $\Add(\kappa,\kappa^+)^{M[G]}$ containing $g$ and lift $j$ to a $\kappa$-powerset preserving embedding of $M[G][H]$, which we will argue is a $\kappa$-model in $V[G][K]$, thus witnessing that $A$ can be put into a $\kappa$-model for which there is a $\kappa$-powerset preserving embedding.

Observe that both $M[G]$ and $N[G]$ are $\kappa$-models in $V[G]$ by the generic closure criterion as $\p_\kappa^f$ has the $\kappa$-cc. Moreover, both are $\kappa$-special as witnessed by the sequences $\la \overline X_\xi\mid\xi<\kappa\ra$ and $\la\overline Y_\xi\mid\xi<\kappa\ra$ respectively, where $\overline X_\xi=X_\xi[G]$ and $\overline Y_\xi=Y_\xi[G]$. To verify this, we first observe that $G$ is $X_{\xi+1}$-generic for every $\xi<\kappa$. The poset $\p^f_\kappa$ is an element of every $X_{\xi+1}$ since it is definable from $f$ and $\kappa$. Furthermore, every $M$-generic filter for $\p^f_\kappa$ is automatically $X_{\xi+1}$-generic since every antichain of $\p^f_\kappa$ that is an element of $X_{\xi+1}$ is also a subset of $X_{\xi+1}$ by the closure of $X_{\xi+1}$ in $M$. It follows that $ X_{\xi+1}[G]^{\ltkappa}\of X_{\xi+1}[G]$ by the generic closure criterion. Using Remark~\ref{rem:xgenericelem}, it also follows that each $X_{\xi+1}[G]\prec M[G]$, but then it is easy to see that for limit $\xi$, we have $X_\xi[G]\prec M[G]$ as well. An identical argument verifies these properties for the $Y_\xi[G]$-sequence.


Since $\Add(\kappa,1)^{M[G]}=\Add(\kappa,1)^{V[G]}$ and $\Add(\kappa,\kappa^+)^{M[G]}$ is isomorphic to\break $\Add(\kappa,1)^{V[G]}$, it follows that we may obtain an $M[G]$-generic filter $H$ for\break $\Add(\kappa,\kappa^+)^{M[G]}$ by using $g$ on the first coordinate and using the second coordinate of $K$ on the remaining coordinates of $\Add(\kappa,\kappa^+)^{M[G]}$. As in the proof of Theorem \ref{th:weaklycompact}, we first lift $j$ to $M[G]$, while ensuring that the master conditions for the second lift are included in $N[j(G)]$. Using the lifting criterion, it suffices to find an $N$-generic filter for the poset $j(\p_\kappa^f)$ containing $j\image G=G$ as a subset. As in (\ref{eqn:factorization}) above, we may factor the iteration $j(\p_\kappa^f)\cong \p_\kappa^f*(\dot\Add(\kappa,j(f)(\kappa))\times\dot{\tilde{\E}}^{j(f)}_{(\kappa,\bar{\kappa}^N)})*\dot{\tilde{\p}}_\tail^{j(f)}$ in $N$. See the discussion around (\ref{eqn:factorization}) for a definition of the notation. We use $G$ as the $N$-generic filter for the $\p_\kappa^f$ part of the forcing to satisfy the lifting criterion.


We shall construct an $N[G]$-generic filter for $\Add(\kappa,j(f)(\kappa))^{N[G]}$ as a particular isomorphic copy of $H$. We will partition the posets $\Add(\kappa,\kappa^+)^{M[G]}$ and $\Add(\kappa,j(f)(\kappa))^{N[G]}$ each into $\kappa$ pieces and argue that the corresponding pieces are isomorphic.
Let $x_0=\overline X_1\cap (\kappa^+)^{M[G]}$ and for $\xi>0$, let $x_\xi=(\overline X_{\xi+1}\setminus \overline X_\xi)\cap (\kappa^+)^{M[G]}$. Observe that each $x_\xi$ has size $\kappa$ in $M[G]$ and $(\kappa^+)^{M[G]}$ is the disjoint union of the $x_\xi$. For $\xi<\kappa$, we let $\q_\xi$ be the subposet of $\Add(\kappa,\kappa^+)^{M[G]}$ consisting of conditions $p$ with domain contained in $x_\xi$. Each such poset $\q_\xi$ is an element of $M[G]$ and all full support products $\Pi_{\xi<\delta}\q_\xi$ for $\delta<\kappa$ are elements of $M[G]$ as well by closure. Moreover, $M[G]$ sees that each $\q_\xi$ is isomorphic to $\Add(\kappa,1)^{M[G]}=\Add(\kappa,1)^{V[G]}$, and so we may fix isomorphisms $\varphi_\xi\in M[G]$ witnessing this. Finally, observe that each full support product $\Pi_{\xi<\delta}\q_\xi$ is naturally isomorphic to $\overline X_\delta\cap \Add(\kappa,\kappa^+)^{M[G]}$. Now, we perform a similar partition of $\Add(\kappa,j(f)(\kappa))^{N[G]}$. We let $y_0=\overline Y_1\cap j(f)(\kappa)$ and $y_\xi=(\overline Y_{\xi+1}\setminus \overline Y_\xi)\cap j(f)(\kappa)$, and define $\rr_\xi$ to be the subposet of $\Add(\kappa,j(f)(\kappa))^{N[G]}$ consisting of conditions $p$ with domain contained in $y_\xi$. We also fix isomorphisms $\psi_\xi\in N[G]$ witnessing that the $\rr_\xi$ is isomorphic to $\Add(\kappa,1)^{N[G]}=\Add(\kappa,1)^{V[G]}$ and note that each full support product $\Pi_{\xi<\delta}\rr_\xi$ is isomorphic to $\overline Y_\delta\cap \Add(\kappa,j(f)(\kappa))^{N[G]}$ . In $V[G]$, the poset $\Add(\kappa,\kappa^+)^{M[G]}$ may be viewed as the bounded support product $\Pi_{\xi<\kappa}\q_\xi$ and the poset $\Add(\kappa,j(f)(\kappa))^{N[G]}$ may be viewed as the bounded-support product $\Pi_{\xi<\kappa}\rr_\xi$. It follows that using the isomorphisms $\varphi_\xi$ and $\psi_\xi$, we may define in $V[G]$, an isomorphism of $\Add(\kappa,\kappa^+)^{M[G]}$ with $\Add(\kappa,j(f)(\kappa))^{N[G]}$ that maps $\q_\xi$ isomorphically onto $\rr_\xi$. Via this isomorphism, we obtain an $N[G]$-generic filter $\tilde H_\kappa$ for $\Add(\kappa,j(f)(\kappa))^{N[G]}$ from $H$.


Since ${\tilde{\E}}^{j(f)}_{(\kappa,\bar{\kappa}^N)}$ is ${\leq}\kappa$-closed in $N[G]$, it follows by the diagonalization criterion that there is an $N[G]$-generic filter $\tilde{H}_{(\kappa,\bar{\kappa}^N)}$ in $V[G]$ for ${\tilde{\E}}^{j(f)}_{(\kappa,\bar{\kappa}^N)}$. Now since $\Add(\kappa,j(f)(\kappa))^{N[G]}$ has the $\kappa^+$-cc and ${\tilde{\E}}^{j(f)}_{(\kappa,\bar{\kappa}^N)}$ is ${\leq}\kappa$-closed in $N[G]$, it follows that the generic filters $\tilde H_\kappa$ and $\tilde{H}_{(\kappa,\bar{\kappa}^N)}$ are mutually generic, and hence $\tilde{H}:=\tilde H_\kappa\times\tilde H_{(\kappa,\bar{\kappa}^N)}$ is $N[G]$-generic for $\Add(\kappa,j(f)(\kappa))\times\tilde \E^{j(f)}_{(\kappa,\bar{\kappa}^N)}$.

The one consequence of our particular choice of $\tilde H_\kappa$ is that we can now argue that for every $\delta<\kappa$, the sets $H\cap \overline X_\delta$ are elements of $N[G][\tilde H]$. Because we used the isomorphisms $\varphi_\xi\in M[G]\subseteq N[G]$ and $\psi_\xi\in N[G]$ to construct $\tilde H_\kappa$ from $H$, we now use the sequences $\la \varphi_\xi\mid\xi<\delta\ra$ and $\la\psi_\xi\mid\xi<\delta\ra$ to reverse the process. We shall use that the increasingly large pieces of $H$, namely $H\cap \overline X_\delta$, are in $N[G][\tilde H]$ to construct the necessary master conditions for the second stage of the lift. Our construction of $\tilde H_\kappa$ will also prove instrumental in showing that the final lift is $\kappa$-powerset preserving.  Observe that any subset $y$ of $\kappa$ in $N[G][\tilde{H}]$ is also in $N[G][\tilde{H}_\kappa]$ by Easton's Lemma (Lemma \ref{le:easton}) and thus has a nice $\Add(\kappa,j(f)(\kappa))^{N[G]}$-name $\dot{y}$ in some $\overline Y_\delta$. It follows that, using the sequences $\la \varphi_\xi\mid\xi<\delta\ra$ and $\la\psi_\xi\mid\xi<\delta\ra$, $N[G]$ can construct a nice $\Add(\kappa,\kappa^+)^{M[G]}$-name $\dot x$ such that $(\dot x)_H=y$. We shall see below that this suffices to argue that the final lift is $\kappa$-powerset preserving.

We now finish the construction of the $N$-generic filter for $j(\P^f_\kappa)$. The model $N[G]$ remains a $\kappa$-model in $V[G][K]$ by the closure of $\Add(\kappa,F(\kappa))^{V[G]}$, and so $N[G][\tilde{H}]$ is a $\kappa$-model as well in $V[G][K]$ by the ground closure criterion.  Thus, we use the diagonalization criterion to construct an $N[G][\tilde{H}]$-generic filter $\tilde{G}_\tail$ for $\tilde\p_\tail^{j(f)}$. We are now able to lift $j$ to $j:M[G]\to N[j(G)]$ with $j(G)=G*\tilde{H}*\tilde{G}_\tail$ in $V[G][K]$.

Next, we complete the lift of the embedding $j$ to $M[G][H]$ by constructing an $N[j(G)]$-generic filter for the poset $j(\Add(\kappa,\kappa^+))=\Add(j(\kappa),j(\kappa)^+)^{N[j(G)]}$ containing $j\image H$ as a subset. First, we argue that $N[j(G)]$ contains increasingly large pieces of $j\image H$; these pieces, $j\image (H\cap \overline X_\xi)$ for each $\xi<\kappa$, will be used to obtain increasingly powerful master conditions for the lift. Let us show that if $D\in M[G]$ and $|D|^{M[G]}=\kappa$ then $j\restrict D \in N[j(G)]$. Fix some bijection $\sigma:\kappa\to D$. Indeed, for $a\in D$, we have $j(a)=j(\sigma(\xi))=j(\sigma)(\xi)$, where $\sigma(\xi)=a$, and thus $j\restrict D$ can be constructed from $\sigma$ and $j(\sigma)$, both of which are in $N[j(G)]$. Recall that we have $H\cap\overline X_\xi \in N[j(G)]$ for every $\xi<\kappa$. From the above remarks, it now follows that since $|\overline X_\xi|^{M[G]}=\kappa$, we have $j\restrict\overline X_\xi\in N[j(G)]$ and thus $j\image(H\cap \overline X_\xi)\in N[j(G)]$. This implies that for each $\xi<\kappa$, the union $q_\xi:=\bigcup j\image(H\cap\overline X_\xi)$ is a condition in $\Add(j(\kappa),j(\kappa)^+)^{N[j(G)]}$. The conditions $q_\xi$ will be the increasingly powerful master conditions.


We construct an $N[j(G)]$-generic filter for $\Add(j(\kappa),j(\kappa)^+)^{N[j(G)]}$ containing $j\image H$ using a
diagonalization argument. Let $\alpha_\xi=\overline X_\xi\cap (\kappa^+)^{M[G]}$, which is an ordinal since $\kappa\of \overline X_\xi$ (see Remark \ref{re:ordinal}), and note that $\text{dom}(q_\xi)=j\image\alpha_\xi\of j(\alpha_\xi)$. The ordinals $\alpha_\xi$ are obviously unbounded in $(\kappa^+)^{M[G]}$ and therefore, since $j$ is an ultrapower map, it follows that the ordinals $j(\alpha_\xi)$ are unbounded in $(j(\kappa)^+)^{N[j(G)]}$. Consider a maximal antichain $A$ of $\Add(j(\kappa),j(\kappa)^+)^{N[j(G)]}$ in $N[j(G)]$ and note that, since $\Add(j(\kappa),j(\kappa)^+)^{N[j(G)]}$ has the $j(\kappa)^+$-cc in $N[j(G)]$, there must be an $\alpha_\xi$ such that $A$ is a maximal antichain of $\Add(j(\kappa),j(\alpha_\xi))^{N[j(G)]}$. In  $V[G][K]$, we enumerate the maximal antichains of $\Add(j(\kappa),j(\kappa)^+)^{N[j(G)]}$ that are elements of $N[j(G)]$ in a sequence $\la A_\xi\mid\xi<\kappa\ra$. We shall build a descending $\kappa$-sequence of conditions in $\Add(j(\kappa),j(\kappa)^+)^{N[j(G)]}$ compatible to $j\image H$, which will be used to generate an $N[j(G)]$-generic filter containing $j\image H$. Fix $\alpha_{\xi_0}$ such that $A_0$ is a maximal antichain of
$\Add(j(\kappa),j(\alpha_{\xi_0}))^{N[j(G)]}$ and find a condition $p_0\in
\Add(j(\kappa),j(\alpha_{\xi_0}))^{N[j(G)]}$ below $q_{\xi_0}$ and some element
of $A_0$.  Since
$\text{dom}(q_{\xi_0})=j\image{\alpha_{\xi_0}}$, the condition $p_0$ is compatible to $j\image H$. Now assume inductively
that we have a descending sequence of conditions $\la p_\zeta\mid
\zeta<\delta\ra$ for some $\delta<\kappa$ compatible to $j\image H$
and each $p_\zeta$ has above it some element of $A_\zeta$. We
build $p_\delta$ as follows. Let
$p^*_\delta=\Union_{\zeta<\delta}p_\zeta$, and fix
$\alpha_{\xi_\delta}$ such that $\text{dom}(p^*_\delta)\subseteq j(\alpha_{\xi_\delta})$ and $A_\delta$ is a maximal antichain of $\Add(j(\kappa),j(\alpha_{\xi_\delta}))^{N[j(G)]}$. Since
$p^*_\delta$ is compatible to $j\image H$ by the inductive assumption, we can find a condition
$p_\delta^{**}$ extending $p^*_\delta$ and $q_{\xi_\delta}$, and
then choose $p_\delta\in \Add(j(\kappa),j(\alpha_{\xi_\delta}))^{N[j(G)]}$ below $p^{**}_\delta$ and below some element of
$A_\delta$. Let $J$ be any filter generated by the sequence $\la p_\xi\mid\xi<\kappa\ra$. It is clear that $J$ is $N[j(G)]$-generic for $\Add(j(\kappa),j(\kappa)^+)^{N[j(G)]}$ and contains $j\image H$.
We are now able to lift $j$ to $j:M[G][H]\to N[j(G*H)]$ with
$j(G*H)=j(G)*J$ in $V[G][K]$.

We observed earlier that the model $M[G]$ is a $\kappa$-model in $V[G]$ and hence also in $V[G][K]$ by the ground closure criterion. Thus, the model $M[G][H]$ is a $\kappa$-model in $V[G][K]$ by another application of the ground closure criterion. To complete the proof, it remains to verify that the lift of $j$ is $\kappa$-powerset preserving. Since $\p_\tail^{j(f)}*\Add(j(\kappa),j(\kappa)^+)$ is $\leqkappa$-closed in $N[G][\tilde{H}]$, it follows that
$N[G][\tilde{H}]$ and $N[j(G*H)]$ have the same subsets of $\kappa$. Thus, it suffices to show that every subset of $\kappa$ in $N[G][\tilde{H}]$ is an element of $M[G][H]$. First, we note that every subset of $\kappa$ in $N[G]$ is an element of $M[G]$ since it has a $\p_\kappa^f$-name in $\her{\kappa}^N\of M$. By our earlier observation, for every $B\of\kappa$ in $N[G][\tilde{H}]$, there is a nice $\Add(\kappa,\kappa^+)^{M[G]}$-name $\dot B\in N[G]$ such that $\dot B_H=B$. Since $\dot B$ can be coded by a subset of $\kappa$,  it follows that $\dot B\in M[G]$, and hence $B\in M[G][H]$.
\end{proof}

\begin{theorem}\label{th:ramsey}
Assuming \GCH, if $\kappa$ is a Ramsey cardinal and $F$ is a class function on the regular cardinals having a closure point at $\kappa$ and satisfying $F(\alpha)\leq F(\beta)$ for $\alpha\leq\beta$ and $\alpha<\text{cf}(F(\alpha))$, then there is a confinality preserving forcing extension in which $\kappa$ remains Ramsey and $F$ is realized as the continuum function, that is $2^\delta=F(\delta)$ for every regular cardinal $\delta$.
\end{theorem}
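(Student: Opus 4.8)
The plan is to mirror the argument of Theorem~\ref{th:stronglyramsey}, substituting Lemma~\ref{le:ramseycharacterization} for Lemma~\ref{le:strongramseycharacterization} and diagonalization criterion~(2) (Lemma~\ref{le:diag(2)}) for the diagonalization criterion (Lemma~\ref{le:diag(1)}) wherever generic filters over the side models are constructed. As before, using Lemma~\ref{le:easton} it suffices to show that $\kappa$ remains Ramsey after forcing with $\p^F_\kappa*\dot\Add(\kappa,F(\kappa))$; so we fix $V$-generic $G*K$ for this poset, reduce to a nice $\p^F_\kappa*\dot\Add(\kappa,1)$-name $\dot A$ with $(\dot A)_{G*g}=A$ via an automorphism of $\Add(\kappa,F(\kappa))$, and must place an arbitrary $A\of\kappa$ in $V[G][K]$ into a weak $\kappa$-model carrying a weakly amenable $\omega_1$-intersecting ultrafilter. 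Using Lemma~\ref{le:ramseycharacterization}, fix an $\omega$-special weak $\kappa$-model $M$ containing $\dot A$ and $f=F\restrict\kappa$, with a weakly amenable $\omega_1$-intersecting $M$-ultrafilter whose ultrapower map $j:M\to N$ satisfies $M=V_{j(\kappa)}^N\in N$, and fix the witnessing $\omega$-sequences $\la X_n\mid n<\omega\ra$ for $M$ and $\la Y_n\mid n<\omega\ra$ for $N$ (via Lemma~\ref{le:specialult}), arranging $\kappa,f\in X_0\cap Y_0$. We force over $M$ with $(\p^f_\kappa*\dot\Add(\kappa,\kappa^+))^M$, build an $M[G]$-generic $H$ for $\Add(\kappa,\kappa^+)^{M[G]}$ containing $g$, and lift $j$ to an $M$-ultrafilter witnessing embedding of $M[G][H]$.

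The structural steps are identical to the strongly Ramsey case. First $M[G]$ and $N[G]$ are $\omega$-special weak $\kappa$-models in $V[G]$, witnessed by $\la X_n[G]\mid n<\omega\ra$ and $\la Y_n[G]\mid n<\omega\ra$: each $X_{n+1}$ is $\ltkappa$-closed in $M$ so every $M$-generic for the $\kappa$-cc iteration $\p^f_\kappa$ is $X_{n+1}$-generic, whence Lemmas~\ref{le:groundclosure}, \ref{le:genericclosure} and Remark~\ref{rem:xgenericelem} apply. Using $\overline X_n=X_n[G]$ one partitions $(\kappa^+)^{M[G]}$ into $\omega$-many blocks $x_n$ of size $\kappa$, sets $\q_n$ to be the corresponding subposet of $\Add(\kappa,\kappa^+)^{M[G]}$, fixes isomorphisms $\varphi_n\in M[G]$ with $\Add(\kappa,1)^{V[G]}$, and similarly partitions $\Add(\kappa,j(f)(\kappa))^{N[G]}$ using $\overline Y_n$ with isomorphisms $\psi_n\in N[G]$. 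Then, factoring $j(\p^f_\kappa)\cong\p^f_\kappa*(\dot\Add(\kappa,j(f)(\kappa))\times\dot{\tilde\E}^{j(f)}_{(\kappa,\bar\kappa^N)})*\dot{\tilde\p}^{j(f)}_\tail$ in $N$ as in~(\ref{eqn:factorization}), one transports $H$ through the $V[G]$-definable isomorphism $\Add(\kappa,\kappa^+)^{M[G]}\cong\Add(\kappa,j(f)(\kappa))^{N[G]}$ to obtain an $N[G]$-generic $\tilde H_\kappa$ with the key property that $H\cap\overline X_n\in N[G][\tilde H_\kappa]$ for every $n$, and that subsets of $\kappa$ in $N[G][\tilde H]$ pull back to nice $\Add(\kappa,\kappa^+)^{M[G]}$-names in $N[G]$. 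The tail generics $\tilde H_{(\kappa,\bar\kappa^N)}$ and $\tilde G_\tail$ are built by Lemma~\ref{le:diag(2)} (not Lemma~\ref{le:diag(1)}), which applies precisely because $N[G]$, $N[G][\tilde H]$ are $\omega$-special weak $\kappa$-models in $V[G][K]$ by Lemmas~\ref{le:groundclosure}, \ref{le:genericclosure} and the $\lesseq\kappa$-closure of $\tilde\E^{j(f)}_{(\kappa,\bar\kappa^N)}$ and $\tilde\p^{j(f)}_\tail$, with $N[G]$ surviving the passage to $V[G][K]$ by the closure of $\Add(\kappa,F(\kappa))^{V[G]}$. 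This lifts $j$ to $j:M[G]\to N[j(G)]$, $j(G)=G*\tilde H*\tilde G_\tail$. For the second lift, since $|\overline X_n|^{M[G]}=\kappa$ and bijections witnessing this together with their $j$-images lie in $N[j(G)]$, one gets $j\restrict\overline X_n\in N[j(G)]$, hence $j\image(H\cap\overline X_n)\in N[j(G)]$, so the unions $q_n=\bigcup j\image(H\cap\overline X_n)$ are conditions in $\Add(j(\kappa),j(\kappa)^+)^{N[j(G)]}$; a diagonalization of length $\omega$ through these master conditions (using that the ordinals $\alpha_n=\overline X_n\cap(\kappa^+)^{M[G]}$ are cofinal in $(\kappa^+)^{M[G]}$, hence the $j(\alpha_n)$ in $(j(\kappa)^+)^{N[j(G)]}$, and that $\Add(j(\kappa),j(\kappa)^+)^{N[j(G)]}$ has the $j(\kappa)^+$-cc there) produces an $N[j(G)]$-generic $J\supseteq j\image H$, lifting $j$ to $j:M[G][H]\to N[j(G*H)]$.

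It remains to observe that this lift is the ultrapower map by a weakly amenable, $\omega_1$-intersecting $M[G][H]$-ultrafilter. By the remark following Lemma~\ref{le:liftingcriterion}, the lift is the ultrapower map by the $M[G][H]$-ultrafilter $U^*=\{B\of\kappa\mid B\in M[G][H],\ \kappa\in j(B)\}$ extending $U$. As in the strongly Ramsey proof, $N[G][\tilde H]$ and $N[j(G*H)]$ have the same subsets of $\kappa$ (the tail forcing $\tilde\p^{j(f)}_\tail*\Add(j(\kappa),j(\kappa)^+)$ being $\lesseq\kappa$-closed in $N[G][\tilde H]$), and every subset of $\kappa$ in $N[G][\tilde H]$ lies in $M[G][H]$: subsets of $\kappa$ in $N[G]$ have $\p^f_\kappa$-names in $\her{\kappa}^N\of M$, and any $B\of\kappa$ in $N[G][\tilde H]$ has a nice $\Add(\kappa,\kappa^+)^{M[G]}$-name $\dot B\in N[G]$ codable by a subset of $\kappa$, hence $\dot B\in M[G]$ and $B\in M[G][H]$. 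Therefore $M[G][H]$ and $N[j(G*H)]$ have the same subsets of $\kappa$, so $j$ is $\kappa$-powerset preserving, which gives weak amenability of $U^*$. Finally $\omega_1$-intersecting is inherited: a countable family from $U^*$ in $V[G][K]$ is, by the $\lesseq\kappa$-closure of all the forcing added beyond $\p^f_\kappa$ over models closed under countable sequences — or more simply because $\omega_1$-intersectedness of $U$ already gives nonempty intersections of countable subfamilies lying in $M$, and a counterexample in $V[G][K]$ would reflect into one of the $\omega$-special layers — still has nonempty intersection; the cleanest route is to note $U^*\restrict M$-style arguments as in Lemma~\ref{le:ramseycharacterization} show any $\omega$-sequence from $U^*$ can be captured, together with witnesses to membership in $U$, inside finitely many of the $X_n[G][H]$, reducing to the ground-model $\omega_1$-intersectedness of $U$. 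With $M[G][H]$ a weak $\kappa$-model in $V[G][K]$ (by two applications of Lemma~\ref{le:groundclosure}) containing $A$ and carrying a weakly amenable $\omega_1$-intersecting ultrafilter, Theorem~\ref{th:ramseyult} shows $\kappa$ is Ramsey in $V[G][K]$.

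I expect the main obstacle to be the verification that the lifted ultrafilter $U^*$ remains $\omega_1$-intersecting in $V[G][K]$: unlike in the strongly Ramsey case, where $\omega_1$-intersectedness is automatic from $\ltkappa$-closure of the model, here one must genuinely propagate Kunen's condition through the two-step lift, using the $\omega$-special structure to localize each countable threat into finitely many layers $X_n[G][H]$ (so that weak amenability lets $M[G][H]$ see the relevant finite intersections) and then appealing to the ground-model iterability of $U$; the closure bookkeeping showing that $\omega$-specialness is preserved by the forcing and that diagonalization criterion~(2) applies at every invocation is the technical heart of the argument.
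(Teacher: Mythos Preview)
Your proposal has two genuine gaps that prevent it from going through as written.

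\textbf{Construction of the $M[G]$-generic $H$.} You write that ``the structural steps are identical to the strongly Ramsey case'' and never say how $H$ is built. In Theorem~\ref{th:stronglyramsey}, $H$ came from two coordinates of $K$ via the isomorphism $\Add(\kappa,\kappa^+)^{M[G]}\cong\Add(\kappa,1)^{V[G]}$, which holds precisely because $M$ is a $\kappa$-model. Here $M$ is only a weak $\kappa$-model, so $\Add(\kappa,\kappa^+)^{M[G]}$ is not even countably closed (take $p_n\in X_n$ descending) and in particular is not isomorphic to $\Add(\kappa,1)^{V[G]}$. The paper instead views $\Add(\kappa,\kappa^+)^{M[G]}$ in $V[G]$ as the \emph{finite-support} product $\Pi_{n<\omega}\q_n$, takes the first $\omega$ coordinates of $K$ as a filter $H'$ on the full-support product, and lets $H$ be the finite-support restriction of $H'$; this $H$ is not $V[G]$-generic, but it is $M[G]$-generic because the $\kappa^+$-cc in $M[G]$ forces every maximal antichain there into some finite product $\Pi_{n<m}\q_n$. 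Without this step you have no $H$ to transport to $\tilde H_\kappa$.

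\textbf{The $\omega_1$-intersecting verification.} None of your three sketches works. The forcing above $\p^f_\kappa$ is not ``over models closed under countable sequences'' (neither $M$ nor $M[G]$ is), so the closure remark is vacuous. A countable sequence $\la A_n\ra$ from $U^*$ will typically use all layers $X_n[G][H]$, not finitely many, so the localization idea fails; and the $A_n$ are new subsets of $\kappa$ not in $M$, so ``witnesses to membership in $U$'' makes no sense. The paper's argument is a two-stage \Los-type reduction. First one reduces (by small forcing absorption) to the case that $\p^f_\kappa$ is countably closed, and proves that $W\cap M[G]$ is $\omega_1$-intersecting for sequences in $V[G]$: given $\la A_n\ra\in V[G]$ with $A_n\in M[G]$ and $\kappa\in j(A_n)$, countable closure of $\p^f_\kappa$ puts the sequence of names, forcing conditions $p_n\in j(G)$ witnessing $\kappa\in j(\dot A_n)$, and representing functions $f_n$ with $[f_n]_U=p_n$ all into $V$, whence the ground-model $\omega_1$-intersecting property of $U$ applied to the countable family of \Los\ sets yields an $\alpha$ and a contradiction. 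Second, one repeats the scheme for $\la A_n\ra\in V[G][K]$ with $A_n\in M[G][H]$: countable closure of $\Add(\kappa,F(\kappa))^{V[G]}$ puts the relevant sequences into $V[G]$, and one now invokes the first stage applied to $W\cap M[G]$. This is the substantive content you are missing, and your final paragraph correctly anticipates that it is the heart of the matter.
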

\begin{proof}
As in the previous arguments, it suffices to show that $\kappa$ remains Ramsey after forcing with $\p^F_\kappa*\dot\Add(\kappa,F(\kappa))$. Towards this end, let $G*K\subseteq\p_\kappa^F*\dot\Add(\kappa,F(\kappa))$ be $V$-generic, and consider the extension $V[G][K]$. We need to show that every $A\of\kappa$ in $V[G][K]$ is contained there in a weak $\kappa$-model $M$ for which there exists a weakly amenable $\omega_1$-intersecting $M$-ultrafilter on $\kappa$.

We fix $A\of\kappa$ in $V[G][K]$ and choose a nice
$\p_\kappa^F*\dot\Add(\kappa,1)$-name $\dot A$ in $V$ such that $(\dot A)_{G*g}=A$, where $g$ is the $V[G]$-generic  for $\Add(\kappa,1)^{V[G]}$ obtained from the first coordinate of $K$. Using Lemma \ref{le:ramseycharacterization}, we fix an $\omega$-special weak $\kappa$-model $M$ containing $V_\kappa$, $\dot
A$ and $f=F\restrict\kappa$ for which there is an $\omega_1$-intersecting $M$-ultrafilter $U$ on $\kappa$ with the $\kappa$-powerset preserving ultrapower map $j:M\to N$ having $M=V_{j(\kappa)}^N\in M$.  Also, we fix a sequence $\la X_n\mid n<\omega\ra$ witnessing that $M$ is $\omega$-special. As before, by throwing away an initial segment of the sequence if necessary, we may assume  that $\kappa,f\in X_0$. Using Lemma \ref{le:specialult}, we fix a sequence $\la Y_n\mid n<\omega\ra$ witnessing that $N$ is $\omega$-special with $\kappa,f\in Y_0$. Observe for the future that the models $M[G]$ and $N[G]$ are $\omega$-special as witnessed by the sequences $\la \overline X_n\mid n<\omega\ra$ and $\la \overline Y_n\mid n<\omega\ra$ respectively, where $\overline X_n=X_n[G]$ and $\overline Y_n=Y_n[G]$. As in the proof of Theorem \ref{th:stronglyramsey}, we will force over $M$  with the iteration $(\p^f_\kappa*\Add(\kappa,\kappa^+))^M$. We shall construct an $M[G]$-generic filter $H$ for the poset $\Add(\kappa,\kappa^+)^{M[G]}$ containing $g$ and lift $j$ to a $\kappa$-powerset preserving embedding of $M[G][H]$. We will argue that the lift of $j$ is the ultrapower by an $\omega_1$-intersecting $M[G][H]$-ultrafilter, thus witnessing that $A$ can be put into a weak $\kappa$-model $M[G][H]$ for which there exists a weakly amenable $\omega_1$-intersecting $M[G][H]$-ultrafilter on $\kappa$.

We cannot construct the filter $H$ as in the proof of Theorem \ref{th:stronglyramsey} since, unlike the situation in that argument, here $\Add(\kappa,\kappa^+)^{M[G]}$ is not isomorphic to $\Add(\kappa,1)^{V[G]}$. Indeed, the poset $\Add(\kappa,\kappa^+)^{M[G]}$ is not even countably closed as is witnessed by any descending sequence $\la p_n\mid n<\omega\ra$ of conditions in $\Add(\kappa,\kappa^+)^{M[G]}$ with $p_n\in X_n$. However,  since $V_\kappa\of M$, it is still true that $\Add(\kappa,1)^{M[G]}=\Add(\kappa,1)^{V[G]}$, and we shall exploit this fact in constructing an $M[G]$-generic filter for $\Add(\kappa,\kappa^+)^{M[G]}$. Let $x_0=\overline X_1\cap (\kappa^+)^{M[G]}$ and for $n>0$, let $x_n=(\overline X_{n+1}\setminus \overline X_n)\cap (\kappa^+)^{M[G]}$. Observe that each $x_n$ has size $\kappa$ in $M[G]$ and $(\kappa^+)^{M[G]}$ is the disjoint union of the $x_n$. For $n<\omega$, we let $\q_n$ be the subposet of $\Add(\kappa,\kappa^+)^{M[G]}$ consisting of conditions $p$ with domain contained in $x_n$. Each such poset $\q_n$ is an element of $M[G]$ and thus all products $\Pi_{n<m}\q_n$ for $m<\omega$ are elements of $M[G]$ as well. Moreover, $M[G]$ sees that each $\q_n$ is isomorphic to $\Add(\kappa,1)^{M[G]}=\Add(\kappa,1)^{V[G]}$, and so we may fix isomorphisms $\varphi_n\in M[G]$ witnessing this. Finally, observe that a product $\Pi_{n<m}\q_n$ is naturally isomorphic to $\overline X_m\cap \Add(\kappa,\kappa^+)^{M[G]}$. In $V[G]$, we may view the poset $\Add(\kappa,\kappa^+)^{M[G]}$ as the finite-support product $\Pi_{n<\omega}\q_n$. It follows that using the isomorphisms $\varphi_n$, we may define in $V[G]$, an isomorphism of $\Add(\kappa,\kappa^+)^{M[G]}$ with the finite-support product of $\omega$-many copies of $\Add(\kappa,1)$, that maps $\q_n$ isomorphically onto the $n^{\text{th}}$-copy $\Add(\kappa,1)$. Let $H'$ be the $V[G]$-generic filter for $\Add(\kappa,\omega)$ obtained from the first $\omega$-many coordinates of $K$, and note that $g$ is the filter on the first coordinate of $H'$. Let $H$ be the subset of $H'$ consisting of conditions with finite support. Since the restriction of a generic filter for a full support product to the finite support product is never generic, the filter $H$ will not be $V[G]$-generic for the finite-support product $\Pi_{n<\omega}\q_n$. However, we shall argue that it is, in fact, $M[G]$-generic, which will suffice. Observe that, since $\Add(\kappa,\kappa^+)^{M[G]}$ has the $\kappa^+$-cc in $M[G]$, every antichain of it that is an element of $M[G]$ must be contained in some $\Pi_{n<m}\q_n$. The restriction of $H$ to every product $\Pi_{n<m}\q_n$ is obviously $V[G]$-generic, and so it follows that it must meet every antichain of $\Add(\kappa,\kappa^+)^{M[G]}$ that is in $M[G]$. This completes the argument that $H$ is an $M[G]$-generic filter for $\Add(\kappa,\kappa^+)^{M[G]}$. Next, we will lift $j$ to $M[G][H]$.

As in the previous proofs, we first lift $j$ to $M[G]$, while ensuring that the master conditions for the second lift are included in $N[j(G)]$. Using the lifting criterion, it suffices to find an $N$-generic filter for the poset $j(\p_\kappa^f)$ containing $j\image G=G$ as a subset. As before we factor the iteration $j(\P^f_\kappa)$ as $j(\p_\kappa^f)\cong \p_\kappa^f*(\dot\Add(\kappa,j(f)(\kappa))\times\dot{\tilde{\E}}^{j(f)}_{(\kappa,\bar{\kappa}^N)})*\dot{\tilde{\p}}_\tail^{j(f)}$ in $N$. See the discussion around (\ref{eqn:factorization}) for a definition of the notation. We use $G$ as the $N$-generic filter for the $\p_\kappa^f$ part of the forcing to satisfy the lifting criterion. As in the proof of Theorem \ref{th:stronglyramsey}, we shall construct an $N[G]$-generic filter for $\Add(\kappa,j(f)(\kappa))^{N[G]}$ as a particular isomorphic copy of $H$. We let $y_0=\overline Y_1\cap j(f)(\kappa)$ and $y_n=(\overline Y_{n+1}\setminus \overline Y_n)\cap j(f)(\kappa)$, and define $\rr_n$ to be the subposet of $\Add(\kappa,j(f)(\kappa))^{N[G]}$ consisting of conditions $p$ with domain contained in $y_n$. We also fix isomorphisms $\psi_n\in N[G]$ witnessing that the $\rr_n$ is isomorphic to $\Add(\kappa,1)^{N[G]}=\Add(\kappa,1)^{V[G]}$, and note that each product $\Pi_{n<m}\rr_n$ is isomorphic to $\overline Y_m\cap \Add(\kappa,j(f)(\kappa))^{N[G]}$. In $V[G]$, we may view the poset $\Add(\kappa,j(f)(\kappa))^{N[G]}$ as the finite-support product $\Pi_{n<\omega}\rr_n$. It follows that using the isomorphisms $\varphi_n$ and $\psi_n$, we may define in $V[G]$, an isomorphism of $\Add(\kappa,\kappa^+)^{M[G]}$ with $\Add(\kappa,j(f)(\kappa))^{N[G]}$ that maps $\q_n$ isomorphically onto $\rr_n$. Via this isomorphism, we obtain an $N[G]$-generic filter $\tilde H_\kappa$ for $\Add(\kappa,j(f)(\kappa))^{N[G]}$ from $H$. Since $\tilde{\E}^{j(f)}_{(\kappa,\bar{\kappa}^N)}$ is ${\leq}\kappa$-closed in $N[G]$, it follows from diagonalization criterion~(2) (Lemma \ref{le:diag(2)}) that we may construct an $N[G]$-generic filter $\tilde{H}_{(\kappa,\bar{\kappa}^N)}$ for $\tilde{\E}^{j(f)}_{(\kappa,\bar{\kappa}^N)}$ in $V[G]$. As before, since $\Add(\kappa,j(f)(\kappa))^{N[G]}$ has the $\kappa^+$-cc and $\tilde{\E}^{j(f)}_{(\kappa,\bar{\kappa}^N)}$ is ${\leq}\kappa$-closed in $N[G]$, it follows that $\tilde{H}_\kappa$ and $\tilde{H}_{(\kappa,\bar{\kappa}^N)}$ are mutually generic and thus $\tilde{H}=\tilde{H}_\kappa\times\tilde{H}_{(\kappa,\bar{\kappa}^N)}$ is $N[G]$-generic for $\Add(\kappa,j(f)(\kappa))\times{\tilde{\E}}^{j(f)}_{(\kappa,\bar{\kappa}^N)}$. Now we use diagonalization criterion~(2) to construct an $N[G][\tilde H]$-generic filter $\tilde G_\tail$ for $\p_\tail^{j(f)}$. Thus, we are able to lift $j$ to $j:M[G]\to N[j(G)]$ with $j(G)=G*\tilde H*\tilde G_\tail$ in $V[G][K]$.

As in the proof of Theorem~\ref{th:stronglyramsey}, we can now argue that for every $m<\omega$, the sets $H\cap \overline X_m$ are elements of $N[G][\tilde H]$. Because we used the isomorphisms $\varphi_n\in M[G]\subseteq N[G]$ and $\psi_n\in N[G]$ to construct $\tilde H_\kappa$ from $H$, we use the sequences $\la \varphi_n\mid n<m\ra$ and $\la\psi_n\mid n<m\ra$ to reverse the process. Also, we can make the same argument as in the proof of Theorem~\ref{th:stronglyramsey} that every subset $y$ of $\kappa$ in $N[G][\tilde{H}]$ has a nice $\Add(\kappa,\kappa^+)^{M[G]}$-name $\dot x$ such that $(\dot x)_H=y$.

Next, we complete the lift of the embedding $j$ to $M[G][H]$ by constructing an $N[j(G)]$-generic filter for the poset $j(\Add(\kappa,\kappa^+))=\Add(j(\kappa),j(\kappa)^+)^{N[j(G)]}$ containing $j\image H$ as a subset. As in the proof of Theorem \ref{th:stronglyramsey}, we observe that for each $n<\omega$, the set $j\image (H\cap \overline X_n)$ is an element of $N[j(G)]$, and so
$q_n=\bigcup j\image (H\cap \overline X_n)$ is an element of
$\Add(j(\kappa),j(\kappa)^+)^{N[j(G)]}$. The conditions $q_n$, for $n<\omega$, will be the increasingly powerful master conditions for the lift.

We construct an $N[j(G)]$-generic filter for $\Add(j(\kappa),j(\kappa)^+)^{N[j(G)]}$ containing $j\image H$ using a
diagonalization argument similar to the one in the proof of diagonalization criterion~(2). Let $\alpha_n=\overline X_n\cap (\kappa^+)^{M[G]}$, which is an ordinal since $\kappa\of \overline X_n$, and note that $\text{dom}(q_n)=j\image\alpha_n\of j(\alpha_n)$. The ordinals $\alpha_n$ are obviously unbounded in $(\kappa^+)^{M[G]}$ and therefore, since $j$ is an ultrapower map, the ordinals $j(\alpha_n)$ are unbounded in $(j(\kappa)^+)^{N[j(G)]}$.  For $n<\omega$, let $Z_n=Y_n[j(G)]$, and fix $m_0$ such that $q_0\in Z_{m_0}$. In $N[j(G)]$, using $\leqkappa$-closure of $\Add(j(\kappa),j(\kappa)^+)^{N[j(G)]}$, we construct below $q_0$ a descending $\kappa$-sequence of conditions meeting the $\kappa$-many maximal
antichains of $\Add(j(\kappa),j(\alpha_0))^{N[j(G)]}$ in $Z_{m_0}$
 and choose a condition $p_0$ below the sequence. Since
$\text{dom}(q_0)=j\image{\alpha_0}$, the condition $p_0$ is compatible to $j\image H$. Now assume inductively
that we have a descending sequence of conditions $\la p_n\mid n\leq k\ra$ compatible to $j\image H$,
and each $p_n$ has above it some element of every maximal antichain of
$\Add(j(\kappa),j(\alpha_n))^{N[j(G)]}$ in $Z_{m_n}$. Fix $m_{k+1}$ such that $p_k$ and
$q_{k+1}$ are elements of $Z_{m_{k+1}}$ and choose in
$Z_{m_{k+1}}$, some $p^*_{k+1}$ below both of them, which is possible since $p_k$ is compatible to $j\image H$. Next, in
$N[j(G)]$, we construct below $p^*_{k+1}$  a descending $\kappa$-sequence of conditions meeting  the $\kappa$-many maximal antichains of
$\Add(j(\kappa),j(\alpha_{k+1}))^{N[j(G)]}$ in $Z_{m_{k+1}}$ and choose $p_{k+1}$
below the sequence. In this manner, we obtain a descending
sequence $\la p_n\mid n<\omega\ra$ that is compatible to $j\image H$. We claim that any filter extending $\la p_n\mid n<\omega\ra$ meets all
maximal antichains of $\Add(j(\kappa),j(\kappa)^+)^{N[j(G)]}$ that are elements of $N[j(G)]$. If
$A\in N[j(G)]$ is a maximal antichain of $\Add(j(\kappa),j(\kappa)^+)^{N[j(G)]}$, then
there must be some $\alpha_n$ such that $A\of
\Add(j(\kappa),j(\alpha_n))^{N[j(G)]}$ since $\Add(j(\kappa),j(\kappa)^+)^{N[j(G)]}$ has
$j(\kappa)^+$-cc and the $j(\alpha_n)$ are unbounded in $j(\kappa)^+$.
Also, $A$ clearly remains a maximal antichain in
$\Add(j(\kappa),j(\alpha_m))^{N[j(G)]}$ for all $m\geq n$. Now,  since there must be $k\geq n$ such that
$A\in Z_{m_k}$, the condition $p_k$ must have some
element of $A$ above it by construction.  It follows that any filter containing $p_k$ must meet $A$. Let $J$ be any filter generated by the sequence $\la p_n\mid n<\omega\ra$. It is clear that $J$ contains $j\image H$, and thus, we are able to lift $j$ to $j:M[G][H]\to N[j(G*H)]$ with
$j(G*H)=j(G)*J$ in $V[G][K]$.

The argument that the lift of $j$ is $\kappa$-powerset preserving is identical to the proof of Theorem \ref{th:stronglyramsey}, and so to complete the proof it remains to verify that the lift is the ultrapower by an $\omega_1$-intersecting $M[G][H]$-ultrafilter on $\kappa$. Since $j$ is a lift of an ultrapower embedding, it must itself be an ultrapower embedding by our remarks from Section~\S\ref{se:forcingpreliminaries}, and so we let $W$ be the corresponding $M[G][H]$-ultrafilter. We will argue that $W$ is $\omega_1$-intersecting in two steps.

First, we show that $W\cap M[G]$ is $\omega_1$-intersecting for sequences in $V[G]$.  Since a set $B\of\kappa$ is in $W$ if and only if $\kappa\in j(B)$, it suffices to argue that whenever $\la A_n\mid n<\omega\ra\in V[G]$ is a sequence of subsets of $\kappa$ such that $A_n\in M[G]$ and $\kappa\in j(A_n)$, then $\bigcap_{n<\omega}A_n\neq\emptyset$. Note that only the first poset in the iteration $\p_\kappa^f$ may not be countably closed, and the size of this poset is less than $\kappa$. Since Ramsey cardinals are preserved by small forcing (see Section~\S10 of \cite{kanamori:higher}), we may assume without loss of generality that the poset $\p^f_\kappa$ is countably closed. We fix $\p_\kappa^f$-names $\dot{A}_n\in M$ with $(\dot{A}_n)_G=A_n$, and note
that the sequence $\la \dot{A}_n\mid n\in\omega\ra$ is in $V$ by countable closure of
$\p_\kappa^f$. Next, we fix a $\p_\kappa^f$-name $\dot s$ such that $\one\forces``\dot s$ is an $\omega$-sequence"  and for
all $n<\omega$, $\one \forces\dot s(\check{n})=\dot{A}_n$
over $V$ (such a name is constructed from the sequence $\la
\dot{A}_n\mid n\in\omega\ra$). Now suppose to the contrary that
$\bigcap_{n\in\omega}A_n=\emptyset$, and choose $p\in G$ such
that $p\forces \bigcap\dot s=\emptyset$ over $V$.
Since $p\in G$, we have $j(p)=p\in j(G)$. Using that $j(G)$ is a filter, we find a descending sequence of conditions below $p$ in $j(G)$, $p\geq p_1\geq p_2\geq\cdots\geq p_n\cdots$,  such
that $p_n\forces \check{\kappa}\in j(\dot{A}_n)$ over $N$. Since $j$ is the ultrapower by an $M$-ultrafilter $U$, we may fix for each $n<\omega$, a function $f_n:\kappa\to
\p_\kappa^f$ in $M$ such that $p_n=[f_n]_U$. Note that, by closure of $\p_\kappa^f$,  the
sequence $\la f_n\mid n\in\omega\ra$ is in $V$. Now observe that the
following sets are in $U$:
\begin{itemize}
\item[(1)] $S_n=\{\xi<\kappa\mid f_n(\xi)\forces \check{\xi}\in
\dot{A}_n\text{ over }M\}$ for $n\in\omega$,
\item[(2)] $T_n=\{\xi<\kappa\mid
f_{n+1}(\xi)\leq f_n(\xi)\}$ for $n\in\omega$,
\item[(3)]
$S=\{\xi<\kappa\mid f_0(\xi)\leq p\}$.
\end{itemize}
Note that the sequences $\la S_n\mid n<\omega\ra$ and $\la T_n\mid n<\omega\ra$ are themselves elements of $V$ by countable closure of $\p$. Thus, since $U$ is $\omega_1$-intersecting in $V$, we may intersect all these sets to obtain an ordinal $\alpha<\kappa$ such that:
\begin{itemize}
\item[(1)] for all $n<\omega$, $f_n(\alpha)\forces \check{\alpha}\in \dot{A}_n$ over $M$,
\item[(2)] for all $n<\omega$, $f_{n+1}(\alpha)\leq f_n(\alpha)$,
\item[(3)] $f_0(\alpha)\leq p$.
\end{itemize}
Once again by closure of $\p_\kappa^f$, we may fix a condition $q$ below the descending sequence $p\geq f_0(\alpha)\geq f_1(\alpha)\geq\cdots\geq f_n(\alpha)\geq\cdots$, which consequently has the following
properties:
\begin{itemize}
\item[(1)] for all $n<\omega$, $q\forces \check{\alpha}\in \dot{A}_n$ over $M$,
\item[(2)] $q\forces \bigcap \dot s=\emptyset$ over $V$.
\end{itemize}
Suppose $\overline G\subseteq \p$ is any $V$-generic filter containing $q$. Since for all $n\in\omega$, $q\forces \check{\alpha}\in \dot{A}_n$ over $M$, we have that $\alpha\in (\dot{A}_n)_{\overline G}$ for all $n\in\omega$ in $V[\overline G]$. On the other hand, since $q\forces \bigcap \dot s=\emptyset$ over $V$ and for all $n\in\omega$, $\one\forces \dot s(\check{n})=\dot{A}_n$ over $V$, we have that $\bigcap
(\dot s)_{\overline G}=\bigcap_{n\in\omega}(\dot{A}_n)_{\overline G}= \emptyset$ in $V[\overline G]$.
Thus, we have reached a contradiction showing that $W$ is $\omega_1$-intersecting for sequences in $V[G]$.

Now, we are ready to show that $W$ is $\omega_1$-intersecting in $V[G][K]$. Towards this end, we fix a sequence $\la A_n\mid n<\omega\ra\in V[G][K]$  of subsets of $\kappa$ such that $A_n\in M[G][H]$ and $\kappa\in j(A_n)$. Using the isomorphisms discussed earlier, we shall view $\Add(\kappa,\kappa^+)^{M[G]}$ as a subposet of $\Add(\kappa,F(\kappa))^{V[G]}$ consisting of conditions whose domains are finite subsets of $\omega$. Recall that this was how we obtained the $M[G]$-generic filter $H$ for $\Add(\kappa,\kappa^+)^{M[G]}$ from the $V[G]$-generic filter $K$ for $\Add(\kappa,F(\kappa))^{V[G]}$. We fix
$\Add(\kappa,\kappa^+)^{M[G]}$-names $\dot{A}_n\in M[G]$ with $(\dot{A}_n)_H=A_n$, and note
that the sequence $\la \dot{A}_n\mid n\in\omega\ra$ is in $V[G]$ by countable closure of
$\Add(\kappa,F(\kappa))^{V[G]}$. Using our assumption above, we may view the names $\dot A_n$ as $\Add(\kappa,F(\kappa))^{V[G]}$-names as well.  Thus, we may fix an $\Add(\kappa,F(\kappa)^{V[G]})$-name $\dot s$ in $V[G]$ such that $\one\forces``\dot s$ is an $\omega$-sequence"  and for
all $n<\omega$, $\one \forces\dot s(\check{n})=\dot{A}_n$. Now suppose to the contrary that
$\bigcap_{n\in\omega}A_n=\emptyset$, and choose $p\in K$ such
that $p\forces \bigcap\dot s=\emptyset$ over $V[G]$. Let $p_n$ be the restriction of $p$ to the first $n$-many coordinates of $\Add(\kappa,F(\kappa))^{V[G]}$, and note that, by our construction, the $M[G]$-generic filter $H$ contains all the conditions $p_n$. It follows that all the conditions $j(p_n)$ are elements of $j(H)$. Using that $j(H)$ is a filter, we may construct a descending sequence $r_0\geq r_1\geq r_2\geq\cdots\geq r_n\geq\cdots$ such that $r_n\forces \check{\kappa}\in j(\dot A_n)$ over $N[j(G)]$ and $r_n\leq j(p_n)$. Since $j:M[G]\to N[j(G)]$ is the ultrapower by an $M[G]$-ultrafilter $W\cap M[G]$, we may fix for each $n<\omega$, a function $f_n:\kappa\to
\Add(\kappa,\kappa^+)^{M[G]}$ in $M[G]$ such that $r_n=[f_n]_{W}$. Note that  the
sequence $\la f_n\mid n\in\omega\ra$ is in $V[G]$ by countable closure of $\Add(\kappa,F(\kappa))^{V[G]}$. Now observe that the
following sets are elements of $W\cap M[G]$:
\begin{itemize}
\item[(1)] $S_n=\{\xi<\kappa\mid f_n(\xi)\forces \check{\xi}\in
\dot{A}_n\text{ over }M[G]\}$ for $n\in\omega$,
\item[(2)] $T_n=\{\xi<\kappa\mid
f_{n+1}(\xi)\leq f_n(\xi)\}$ for $n\in\omega$,
\item[(3)]
$R_n=\{\xi<\kappa\mid f_n(\xi)\leq p_n\}$ for $n\in\omega$.
\end{itemize}
The sequences $\la S_n\mid n<\omega\ra$, $\la T_n\mid
n<\omega\ra$, and $\la R_n\mid n<\omega\ra$ are elements
of $V[G]$ by countable closure of $\Add(\kappa,F(\kappa))^{V[G]}$. Since we showed previously that $W$ is
$\omega_1$-intersecting for sequences in $V[G]$, we may intersect all these sets to obtain an ordinal
$\alpha<\kappa$ such that:
\begin{itemize}
\item[(1)] for all $n<\omega$, $f_n(\alpha)\forces \check{\alpha}\in \dot{A}_n$ over $M[G]$,
\item[(2)] for all $n<\omega$, $f_{n+1}(\alpha)\leq f_n(\alpha)$,
\item[(3)] for all $n<\omega$, $f_n(\alpha)\leq p_n$.
\end{itemize}
Once again, using countable closure of $\Add(\kappa,F(\kappa))^{V[G]}$, we fix a condition $q^*\in \Add(\kappa,F(\kappa))^{V[G]}$ below the descending sequence $f_0(\alpha)\geq f_1(\alpha)\geq f_2(\alpha)\geq \cdots\geq  f_n(\alpha)\geq \cdots$, and note that it is compatible to $p$. Thus, we may fix a condition $q\in \Add(\kappa,F(\kappa))^{V[G]}$ below both $p$ and $q^*$.
If $\overline K$ is any $V[G]$-generic containing $q$, then $\bigcap (\dot s)_{\overline K}=\emptyset$ as $q\leq p$. However, the restriction of $\overline K$ to the $M[G]$-generic $\overline H$ for $\Add(\kappa,\kappa^+)^{M[G]}$ contains all the conditions $f_n(\alpha)$ and hence $\alpha\in (\dot A_n)_{\overline H}=(\dot A_n)_{\overline K}$. Thus, we have reached a contradiction showing that $W$ is fully $\omega_1$-intersecting.
\end{proof}


\end{document}